\newcommand{\qedclaim}{\hfill $\diamond$ \medskip}
\theoremstyle{plain}
\newtheorem{theorem}{Theorem}
\newtheorem{lemma}[theorem]{Lemma}
\newtheorem{corollary}[theorem]{Corollary}
\newtheorem{proposition}[theorem]{Proposition}
\newtheorem{claim}{Claim}[theorem]
\theoremstyle{definition}
\newtheorem{conjecture}[theorem]{Conjecture}
\newtheorem{question}[theorem]{Question}
\theoremstyle{remark}
\newenvironment{subproof}{\par\noindent {\it Subproof}.\ }{\hfill$\lozenge$\par\vspace{11pt}}
\newcommand{\cste}{ 8k \cdot (2\cdot \col + 14k) \cdot (2 \cdot \dr)}
\newcommand{\dr}{ (2k+1)\cdot k \cdot (4k)^{4k}}
\newcommand{\col}{(6k^2)^{3k}}
\newcommand{\free}{subdivision-free}
\DeclareMathOperator{\Dis}{Dis}
\DeclareMathOperator{\Len}{Len}
\title{Bispindles in strongly connected digraphs with large chromatic number}
\author[1]{Nathann Cohen}
\author[2,3]{Fr\'ed\'eric Havet}
\author[2,3,4]{William Lochet}
\author[5]{Raul Lopes}
\affil[1]{ CNRS, LRI, Univ. Paris Sud, Orsay, France}
\affil[2]{ Universit\'e C\^ote d'Azur, CNRS, I3S, UMR 7271, Sophia Antipolis, France}
\affil[3]{ INRIA, France}
\affil[4]{ LIP, ENS de Lyon, France}
\affil[5]{ Departamento de Computa\c{c}ao, Universidade Federal do Cear\'a, Fortaleza, Brazil}
\begin{document}

\maketitle

\begin{abstract}
A {\it $(k_1+k_2)$-bispindle} is the union of $k_1$  $(x,y)$-dipaths and $k_2$ $(y,x)$-dipaths, all these dipaths being pairwise internally disjoint.
Recently, Cohen et al. showed that for every $(1,1)$- bispindle $B$, there exists an integer $k$ such that every strongly connected
digraph with chromatic number greater than $k$ contains a subdivision of $B$. We investigate generalisations of
this result by first showing constructions of strongly connected digraphs with large chromatic number without any $(3,0)$-bispindle
or $(2,2)$-bispindle. Then we show that strongly connected digraphs with large chromatic number contains a 
$(2,1)$-bispindle, where at least one of the $(x,y)$-dipaths and the $(y,x)$-dipath are long.    
\end{abstract}

\section{Introduction}
Throughout this paper, the {\it chromatic number} of a digraph $D$, denoted by $\chi(D)$, is the chromatic number of its underlying graph. 
In a digraph $D$, a \emph{directed path}, or \emph{dipath} is an oriented path where all the arcs are oriented in the same direction, from the initial vertex towards the terminal vertex.

A classical result due to Gallai, Hasse, Roy and Vitaver is the following. 
\begin{theorem}[Gallai~\cite{Gal68}, Hasse~\cite{Has64}, Roy~\cite{Roy67}, Vitaver~\cite{Vit62}]\label{thm:gallairoy}
	If $\chi(D) \geq k$, then $D$ contains a directed path of length $k+1$.  
\end{theorem}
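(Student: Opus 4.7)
The plan is to deduce the existence of a long dipath from a proper coloring that is constructed directly from dipath lengths in an inclusionwise maximal acyclic spanning subdigraph of $D$.

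First I would fix $H$ to be a maximal acyclic spanning subdigraph of $D$ (such an $H$ exists since the empty subdigraph is acyclic), and for every vertex $v \in V(D)$ I would define $f(v)$ to be the length of a longest directed path of $H$ that ends at $v$. The values of $f$ are nonnegative integers, and a dipath of $H$ realising $f(v_0) = \ell$ for some $v_0$ is immediately a dipath of $D$ of length $\ell$. So it suffices to show that $f$ is a proper coloring of the underlying graph of $D$: then $\chi(D) \geq k$ forces some vertex $v_0$ with $f(v_0)$ large enough, and a corresponding dipath is extracted from $H$.

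The core step is therefore to check that $f(u) \neq f(v)$ for every arc $uv \in A(D)$. If $uv \in A(H)$, I would take a longest dipath $P_u$ of $H$ ending at $u$; acyclicity of $H$ together with $uv \in A(H)$ forbids $v$ from appearing on $P_u$ (otherwise $P_u$ and $uv$ would close a directed cycle in $H$), so appending $v$ to $P_u$ yields a dipath of $H$ ending at $v$ and gives $f(v) \geq f(u)+1$. If $uv \notin A(H)$, maximality of $H$ implies that $H \cup \{uv\}$ contains a directed cycle, hence there is a dipath $Q$ of $H$ from $v$ to $u$; concatenating a longest dipath of $H$ ending at $v$ with $Q$ produces a dipath of $H$ ending at $u$ of length at least $f(v)+1$, yielding $f(u) \geq f(v)+1$. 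In both cases $f(u) \neq f(v)$, so $f$ is a proper coloring.

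The step I expect to require the most care is the case $uv \notin A(H)$: one has to argue that the longest dipath ending at $v$ and the auxiliary dipath $Q$ from $v$ to $u$ intersect only at $v$, so that their concatenation is genuinely a dipath rather than a walk. This is where acyclicity of $H$ is used again, since any further common vertex would yield a directed cycle in $H$. Once this is in place, the chromatic number lower bound on $f$ directly translates into a dipath of the required length in $D$.
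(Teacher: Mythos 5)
Your argument is the classical proof of the Gallai--Roy theorem via a maximal acyclic spanning subdigraph, and it is correct; note that the paper gives no proof of this statement (it is cited from Gallai, Hasse, Roy and Vitaver), so there is no in-paper argument to compare with beyond observing that yours is the standard one. Both cases of the properness check are sound: when $uv\in A(H)$, acyclicity forbids $v$ from lying on a longest dipath ending at $u$; when $uv\notin A(H)$, maximality of $H$ yields a $(v,u)$-dipath $Q$ in $H$, and the concatenation with a longest dipath ending at $v$ is a genuine dipath, since a common vertex $w\neq v$ would give a $w$-to-$v$ dipath and a $v$-to-$w$ dipath in $H$ and hence a directed cycle. (You could also bypass the concatenation issue entirely by noting that your first case shows $f$ increases strictly along every arc of $H$, hence along $Q$, giving $f(u)\ge f(v)+1$ directly.)

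One caveat on the quantitative conclusion, which you left implicit (``$f(v_0)$ large enough''): your colouring uses the values $0,1,\dots,\max_v f(v)$, so it shows $\chi(D)\le \max_v f(v)+1$, and $\chi(D)\ge k$ therefore yields a dipath of length $k-1$, i.e.\ on $k$ vertices. That is the correct and best possible form of the classical theorem (transitive tournaments show optimality); the ``length $k+1$'' in the statement as printed is an overstatement that no proof can deliver (a single arc has chromatic number $2$ but no dipath of length $3$). So state explicitly that your construction produces a dipath of length $k-1$, and treat the printed bound as a misstatement of the cited result rather than something to be proved.
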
   

This raises the following question.

\begin{question}
Which digraphs are subdigraphs of all digraphs with large chromatic number ?
\end{question}

A famous theorem by Erd\H{o}s~\cite{Erd59} states that there exist graphs with arbitrarily high girth and arbitrarily large
chromatic number. This means that if $H$ is a digraph containing a cycle, there exist digraphs with arbitrarily high 
chromatic number with no subdigraph isomorphic to $H$. Thus the only possible candidates to generalise Theorem \ref{thm:gallairoy} are the {\it oriented trees} that are orientations of trees.
Burr\cite{Burr80} proved that every $(k-1)^2$-chromatic digraph contains every oriented tree of order $k$ and made the following conjecture.

\begin{conjecture}[Burr~\cite{Burr80}]\label{cnj:tn}
	 If $\chi(D) \geq (2k-2)$, then $D$ contains a copy of any oriented tree $T$ of order $k$.
\end{conjecture}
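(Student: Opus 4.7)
The plan is to proceed by induction on $k$. The cases $k \le 2$ are trivial. For the inductive step, fix an oriented tree $T$ on $k \ge 3$ vertices, pick a leaf $v$ of $T$ with unique neighbour $u$, and let $T' = T - v$, an oriented tree on $k-1$ vertices. By reversing all arcs of $D$ if necessary, we may assume the arc between $u$ and $v$ in $T$ is $uv$. By the induction hypothesis, every digraph with chromatic number at least $2(k-1)-2 = 2k-4$ contains $T'$.

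Given $D$ with $\chi(D) \ge 2k-2$, the task is to find a copy of $T'$ in $D$ whose image $x$ of $u$ has an out-neighbour outside the image of $T'$; such an out-neighbour can then play the role of $v$. The natural attempt is: first pass to a $(2k-2)$-critical subdigraph $D'$, in which every vertex has degree at least $2k-3$ in the underlying graph. Next, exhibit a vertex $x$ (chosen via a longest dipath, or by a criticality argument) whose out-neighbourhood is large enough that deleting an appropriate part of its closed in-neighbourhood leaves a subdigraph of chromatic number at least $2k-4$; by induction this subdigraph contains a copy of $T'$ with $x$ at $u$, and any remaining out-neighbour of $x$ extends this copy to $T$. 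A refinement would use Theorem \ref{thm:gallairoy} inside $D'$ to obtain a long directed path along which $T$ is embedded greedily, using the criticality to guarantee the missing arcs when branching off from the path.

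The main obstacle, and indeed the reason Conjecture \ref{cnj:tn} remains open in full generality, is that the target bound $2k-2$ is extraordinarily tight: losing only $2$ in chromatic number per removed leaf gives no slack to absorb the chromatic cost of pruning a neighbourhood or of a badly placed anchor vertex. This is precisely what forces Burr's proof to concede the much weaker bound $(k-1)^2$. The conjecture has been verified for several restricted families of trees (directed paths via Theorem \ref{thm:gallairoy}, antidirected paths, certain caterpillars, in-stars and out-stars) by ad hoc arguments, but any uniform proof would appear to require a genuinely global technique, for instance a kernel-type or flow-type argument operating on partial embeddings in a critical digraph, rather than the naive leaf-removal induction outlined above.
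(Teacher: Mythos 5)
The statement you were asked about is labelled a \emph{conjecture} in the paper (Burr's conjecture), and the paper offers no proof of it --- it is quoted as an open problem, with the best known bounds being Burr's $(k-1)^2$ and the $(k/2)^2$ of Addario-Berry et al. Your proposal, as you yourself concede in its final paragraph, is not a proof either: it is a plan whose key step is never carried out, so what you have written cannot be spliced in as a proof of anything.

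To name the gap concretely: the inductive step requires a copy of $T'=T-v$ in which the image $x$ of $u$ still has an out-neighbour outside the copy, and nothing in your outline delivers this. The induction hypothesis applied to a subdigraph of chromatic number $2k-4$ gives a copy of $T'$ with no control whatsoever over where $u$ is embedded; passing to a $(2k-2)$-critical subdigraph only guarantees \emph{underlying} degree at least $2k-3$ at every vertex, with no lower bound on out-degree (the vertex playing $u$ may have out-degree $0$, or all its out-neighbours may already be used by the copy of $T'$). The suggested fix --- delete ``an appropriate part of its closed in-neighbourhood'' and keep chromatic number at least $2k-4$ --- is exactly the step that cannot be justified with only a slack of $2$ colours per leaf: deleting the neighbourhood of a vertex can destroy far more than two colours' worth of chromatic number, and the greedy embedding along a Gallai--Roy path runs into the same anchoring problem when branching off. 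So the proposal contains a genuine (indeed, the entire) gap; the honest statement, consistent with the paper, is that this remains a conjecture.
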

The best known upper bound, due to Addario-Berry et al.~\cite{AHS+13}, is in $(k/2)^2$.
However, for paths with two blocks ({\it blocks} are maximal directed subpaths), the best possible upper bound is known.

\begin{theorem}[Addario-Berry et al.~\cite{AHT07}]\label{thm:2blocks}
Let $P$ be an oriented path with two blocks on $n > 3$ vertices, then every digraph with chromatic number (at least) $n$ contains $P$. 
\end{theorem}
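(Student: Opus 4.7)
Let $a, b$ denote the block lengths of $P$, so $a + b = n-1$; assume without loss of generality $a \geq b \geq 1$. The path $P$ admits two orientations, in which the two blocks meet either at a common sink or at a common source; by replacing $D$ by its reverse $D^{R}$ if necessary, it suffices to find a vertex $w \in V(D)$ with two internally vertex-disjoint in-dipaths of lengths $a$ and $b$ ending at $w$.

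The first step is the Gallai--Hasse--Roy--Vitaver colouring: set $f(v)$ to be the length of a longest in-dipath ending at $v$. This is a proper vertex-colouring of $D$, so $\chi(D) \leq \max_{v} f(v) + 1$, and the hypothesis $\chi(D) \geq n$ yields a vertex $w$ with $f(w) \geq a+b$. Fix such a $w$ together with a witness in-dipath $Q = v_{0} v_{1} \cdots v_{L} = w$, where $L = f(w)$. The terminal $(a+1)$-vertex segment of $Q$ is the first block: an in-dipath of length $a$ ending at $w$; set $J := \{v_{L-a}, \ldots, v_{L-1}\}$, the $a$ vertices of this segment other than $w$. The remaining task is to produce an in-dipath of length $b$ ending at $w$ whose internal vertices avoid $J$, equivalently to show $f_{D - J}(w) \geq b$. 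Note that $\chi(D - J) \geq \chi(D) - |J| \geq b+1$, so a second application of Theorem~\ref{thm:gallairoy} yields a dipath of length $\geq b$ somewhere in $D - J$; the difficulty is to force its endpoint to be $w$ itself.

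The main obstacle is concentrating this second long in-dipath at $w$. My plan is to pass to a minimum counterexample $D$ --- arc-minimum subject to $\chi(D) \geq n$ and containing no copy of $P$ --- and to choose $(w, Q)$ extremally. Arc-criticality then supplies, for each arc $e = uv$ of $D$, a proper $(n-1)$-colouring of $D - e$ equalising its endpoints. Splicing such colourings should show that if $f_{D - J'}(w') < b$ for every admissible pair $(w', Q')$, then one can assemble a proper $(a+b)$-colouring of $D$ --- for instance by colouring $V \setminus S$ using $f$ with colours $0, \ldots, a-1$ and colouring $S := \{v : f(v) \geq a\}$ via an appropriate restricted-path function with colours $a, \ldots, a+b-1$ --- contradicting $\chi(D) \geq n$. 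The subtle point is ensuring no monochromatic arc lies \emph{within} $S$: the restricted function $f_{D - J'}(\cdot)$ depends on the target vertex through the choice of $J'$, and is not a priori a proper colouring of $D[S]$. Resolving this compatibility, by choosing $J'$ locally around each vertex of $S$ and exploiting the arc-critical colourings, is where the minimality of the counterexample should do the essential work.
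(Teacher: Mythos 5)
This statement is not proved in the paper at all: it is quoted from Addario-Berry, Havet and Thomass\'e \cite{AHT07}, so your proposal has to stand on its own, and as written it is a plan rather than a proof, with the unproven step being precisely the real content of the theorem. The first half is correct but routine: the Gallai--Hasse--Roy--Vitaver level function $f$ (longest in-dipath ending at a vertex) gives $w$ with $f(w)\ge a+b$, hence a length-$a$ in-dipath $Q$ ending at $w$, and deleting $J=V(Q)\setminus\{w\}$ leaves $\chi(D-J)\ge b+1$, hence a length-$b$ dipath \emph{somewhere} in $D-J$. Everything after that --- forcing a length-$b$ in-dipath to terminate at $w$ itself while avoiding $J$ (in fact avoiding all of $Q$, not just its internal vertices, since the two blocks must be disjoint except at $w$) --- is deferred to a hoped-for mechanism (``splicing such colourings should show\dots'', ``minimality of the counterexample should do the essential work''), and you name the obstruction yourself without resolving it: the restricted level function $f_{D-J'}(\cdot)$ depends on the target vertex through the choice of $J'$, so it is not a single well-defined proper colouring of $D[S]$, and no concrete argument is given for how arc-criticality of a minimum counterexample repairs this. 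The natural repair --- colour $S=\{v:f(v)\ge a\}$ by its own internal level function $f_{D[S]}$ --- only reproduces the same difficulty one level up: since $\chi(D[S])\ge \chi(D)-a\ge b+1$, some $w\in S$ carries a length-$b$ in-dipath inside $S$, and one is again left needing a length-$a$ in-dipath ending at that same $w$ and disjoint from it. So the plan is circular exactly at its core.

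This gap is not a routine verification one can wave through. The bound $\chi(D)\ge n$ is best possible, so there is no slack to absorb losses from deleting $J$ or from crude colour exchanges, and the published proof in \cite{AHT07} is a genuine piece of work: it does not follow from two applications of Theorem~\ref{thm:gallairoy} together with a critical-colouring argument, but needs additional structural ideas to control \emph{where} the second block ends. In its present form your argument establishes only the easy weaker fact that $D$ contains a length-$a$ in-dipath at some vertex and, after deleting it, a length-$b$ dipath elsewhere; to turn it into a proof of the theorem you would essentially have to reconstruct the machinery of \cite{AHT07}, and the proposal gives no indication of how the minimal-counterexample colourings would be assembled into a proper $(a+b)$-colouring of $D[S]$.
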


The following celebrated theorem of Bondy shows that  the story does not stop here.

\begin{theorem}[Bondy~\cite{Bon76}]\label{thm:bondy}
Every strong digraph of chromatic number at least $k$ contains a directed cycle of length at least $k$.
\end{theorem}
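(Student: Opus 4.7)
The plan is to establish the contrapositive: if the longest directed cycle in the strong digraph $D$ has length $\ell$, then $\chi(D) \leq \ell$. Echoing the Gallai--Roy-style argument behind Theorem~\ref{thm:gallairoy}, the idea is to construct an explicit proper $\ell$-coloring of $V(D)$ whose propriety is forced by the extremality of a longest cycle.

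Fix a longest directed cycle $C = v_0 v_1 \cdots v_{\ell-1} v_0$, with subscripts read modulo $\ell$. Since $D$ is strongly connected, for every vertex $u$ there is a directed path from $V(C)$ to $u$; let $Q_u$ be a shortest such path, originating at $v_{\phi(u)}$ and of length $\delta(u)$ (so $\delta(u)=0$ whenever $u \in V(C)$). Define the coloring
\[
c(u) \;:=\; \phi(u) + \delta(u) \pmod{\ell}.
\]
On $V(C)$ this assigns $c(v_i)=i$, already using all $\ell$ colors.

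I then verify that $c$ is proper. Suppose for contradiction that an arc $uv$ satisfies $c(u)=c(v)$. Appending $uv$ to $Q_u$ produces a walk of length $\delta(u)+1$ from $v_{\phi(u)}$ to $v$, so minimality of $\delta(v)$ yields $\delta(v) \leq \delta(u)+1$. The congruence $c(u)=c(v)$ rewrites as $\phi(u)-\phi(v) \equiv \delta(v)-\delta(u) \pmod{\ell}$, forcing $\phi(v)-\phi(u)$ to represent a controlled ``shift'' along $C$. From the configuration $Q_u \cup \{uv\} \cup Q_v \cup C$, together with a shortest $v$-to-$V(C)$ path (which exists by strong connectivity), one assembles a closed directed walk consisting of the subarc of $C$ from some $V(C)$-endpoint forward to $v_{\phi(u)}$, the path $Q_u$, the arc $uv$, and the chosen $v$-to-$V(C)$ path. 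A length computation, aided by the inequality $\delta(v) \leq \delta(u)+1$ and the congruence above, shows that the simple directed cycle extracted from this closed walk has length at least $\ell+1$, contradicting the choice of $C$.

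The principal obstacle is making this extraction truly rigorous. The paths $Q_u$ and $Q_v$ may share internal vertices, the walk $v_{\phi(u)} \to u \to v$ may re-enter $V(C)$ before reaching $v_{\phi(v)}$, and the $v$-to-$V(C)$ return path used to close the cycle may create unforeseen shortcuts; additionally, one must handle the degenerate cases in which $u$ or $v$ lies on $C$. Each configuration is resolved by invoking the minimality of all shortest paths in play together with the defining property of $C$ that every directed cycle of $D$ has length at most $\ell$, which forbids certain shortcut structures. Once this bookkeeping is carried out, the propriety of $c$ is established, whence $\chi(D) \leq \ell$, proving the contrapositive.
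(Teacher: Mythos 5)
The paper does not prove Bondy's theorem; it cites \cite{Bon76} and uses the statement as a black box, so I am evaluating your argument purely on its own merits. As written it has two genuine gaps, and you essentially acknowledge the second one yourself without resolving it.

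First, the coloring $c(u)=\phi(u)+\delta(u)\pmod\ell$ is not canonically defined: several distinct vertices of $C$ may be the origin of a shortest path to $u$, giving different values of $\phi(u)$, and the propriety of $c$ is not choice-independent. Take $\ell=4$, $C=v_0v_1v_2v_3v_0$, add vertices $u_1,u_2$ and arcs $v_0u_1$, $v_1u_1$, $u_1u_2$, $u_2v_0$. This digraph is strong and its longest directed cycle has length $4$, yet choosing $Q_{u_1}=v_0u_1$ gives $\phi(u_1)=0$, $\delta(u_1)=1$, so $c(u_1)=1=c(v_1)$ while $v_1u_1$ is an arc: $c$ is not proper. (The other choice $\phi(u_1)=1$ happens to work here, but you give no rule guaranteeing a good choice exists in general, and none is obvious.)

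Second, your length computation only closes in the special case $\delta(v)=0$, i.e.\ $v\in V(C)$: there the closed walk $C[v,v_{\phi(u)}]\odot Q_u\odot(uv)$ is a simple cycle whose length is $\equiv 1\pmod\ell$ and positive, hence $\geq\ell+1$. But when $v\notin V(C)$ — the generic case — your closed walk has a return leg $R_v$ from $v$ back to $V(C)$ whose length and terminal vertex on $C$ are simply not constrained by the congruence $\phi(u)+\delta(u)\equiv\phi(v)+\delta(v)\pmod\ell$. The inequality $\delta(v)\leq\delta(u)+1$ controls how far $v$ is \emph{from} $C$ but says nothing about the cost of returning, so the sum of lengths of the pieces of your walk is not forced to exceed $\ell$. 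This is not ``bookkeeping'' to be tidied up later; it is exactly where the content of Bondy's theorem lives, and the argument does not supply a substitute for it. In its present form the proof proposal is incomplete.
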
 

The strong connectivity assumption is indeed necessary, as transitive tournaments contain no directed cycle but can have arbitrarily  high chromatic number. 

Observe that a directed cycle of length at least $k$ can be seen as a subdivision of $\vec{C}_k$, the directed cycle of length $k$.
Recall that a {\it subdivision} of a digraph $F$ is a digraph that can be obtained from $F$ by replacing each arc $uv$ by a directed path from $u$ to $v$. Cohen et al.  \cite{CHLN16} conjecture that Bondy's theorem can be extended to all oriented cycles.

\begin{conjecture}[Cohen et al.  \cite{CHLN16}]\label{conj:cycle-sub}
For every oriented cycle $C$, there exists a constant $f(C)$ such that every strong digraph with chromatic number at least $f(C)$ contains a subdivision of $C$.
\end{conjecture}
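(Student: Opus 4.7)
The plan is to proceed by induction on the number of blocks of the oriented cycle $C$. For a directed cycle, which can be viewed as having a single block in the cyclic sense, Bondy's Theorem \ref{thm:bondy} already supplies the result. The first genuinely new case is $C$ with two blocks: as an oriented graph this is a pair of internally disjoint dipaths from some vertex $x$ to some vertex $y$ of prescribed lengths $a$ and $b$. I would attack this case by first using Theorem \ref{thm:bondy} to extract a long directed cycle inside a chromatically dense subdigraph, then leveraging strong connectivity to locate a ``chord dipath'' between two of its vertices, yielding two internally disjoint $(x,y)$-dipaths. A pigeonhole argument on path-length residues modulo $\gcd(a,b)$, together with iteration inside successive dense subdigraphs, should allow us to cut the two dipaths to the exact target lengths $a$ and $b$.

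For the inductive step, given $C$ with $2p \ge 4$ blocks, I would peel off one block $B$ and write $C = C' \cup B$, where $C'$ is an oriented path whose endpoints coincide with those of $B$. The hope is then to apply the inductive hypothesis, or a suitably strengthened form of Theorem \ref{thm:2blocks}, to embed $C'$ as an oriented subpath, and to close the missing block using an additional dipath supplied by strong connectivity with the orientation dictated by $B$. Since $B$ itself is a dipath, its direction along the cyclic traversal forces us either to reuse a piece of the existing long directed cycle provided by Theorem \ref{thm:bondy} or to produce a fresh internally disjoint dipath in the opposite direction; strong connectivity is the only hypothesis powerful enough to guarantee the latter.

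The principal obstacle is precisely what the paper's constructions highlight: large chromatic number alone does not force $(3,0)$- or $(2,2)$-bispindles to appear, so the inductive step cannot freely demand arbitrary additional internally disjoint dipaths. The price one must pay is a length hypothesis, as in the paper's main theorem on $(2,1)$-bispindles in which one dipath is required to be long. Translating such length constraints through each inductive step, while simultaneously matching the exact block lengths demanded by the target cycle $C$, is the central analytic difficulty. I would expect the heart of a successful argument to be an iterated absorption procedure: partition the digraph according to a structural invariant (for instance distance-layers from a fixed vertex), pass to a large chromatically dense piece, and then invoke the $(2,1)$-bispindle theorem or a stronger descendant to add one block at a time. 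Making this iteration terminate with the block lengths under control is, to my mind, the reason the full conjecture remains open.
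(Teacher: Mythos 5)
The statement you were asked about is Conjecture~\ref{conj:cycle-sub}; it is attributed to Cohen et al.\ and is \emph{stated}, not proved, in this paper. The paper establishes only special cases: cycles with two blocks (Theorem~\ref{th:ckl}, here quoted from \cite{CHLN16} and later sharpened by Kim et al.), and the $(2+1)$-bispindle results Theorems~\ref{th:P11k} and~\ref{th:main}. You correctly recognize the conjecture as open, so there is no paper proof to compare against, and a discussion of obstacles is the right kind of answer here.

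One misconception in your sketch should be corrected, because it inflates the difficulty of both the two-block case and the inductive step. A subdivision of a digraph $F$ replaces each arc $uv$ of $F$ by a $(u,v)$-dipath of length at least one, so a subdivision of the two-block cycle $C(a,b)$ requires only two internally disjoint dipaths in the prescribed directions of lengths \emph{at least} $a$ and $b$. There is no need to ``cut the two dipaths to the exact target lengths,'' and a pigeonhole argument on residues modulo $\gcd(a,b)$ plays no role; every length constraint in the subdivision problem is a one-sided lower bound, at every level of the induction. The genuine obstruction, which you do identify, is structural rather than arithmetic: as the construction in Section~3 (built from the acyclic digraphs $D_{k,4}$ of Theorem~\ref{dkb}) shows, large chromatic number in a strong digraph does not force a third internally disjoint forward dipath ($3$-spindle) nor two additional internally disjoint return dipaths ($(2+2)$-bispindle). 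Hence ``closing the missing block by a fresh internally disjoint dipath'' is not, in general, available, which is precisely why the paper restricts attention to $B(k,1;1)$ and $B(k,1;k)$, conjectures $B(k_1,1;k_3)$ in general (Conjecture~\ref{conj:3chemins}), and leaves the full Conjecture~\ref{conj:cycle-sub} open.
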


The strong connectivity assumption is also necessary in Conjecture~\ref{conj:cycle-sub} as shown by Cohen et al.  \cite{CHLN16}. This follows from  the following result.
\begin{theorem}\label{dkb}
For any positive integer $b$ and $k$, there exists an acyclic digraph $D_{k,b}$ such that any cycle in $D_{k,b}$ has at least $b$ blocks and $\chi(D_{k,b}) > k$.
\end{theorem}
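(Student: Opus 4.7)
The plan is to prove Theorem \ref{dkb} by induction on $b$. For the base case $b \le 2$, take $D_{k,b}$ to be the transitive tournament $TT_{k+2}$ on $k+2$ vertices. It is acyclic, satisfies $\chi(TT_{k+2}) = k+2 > k$, and, being acyclic, contains no directed (one-block) cycle, so every oriented cycle in it has at least two blocks.

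For the inductive step, assume $H := D_{k,b-1}$ has already been constructed, and we build $D_{k,b}$ by a substitution into an outer acyclic digraph. Choose an acyclic digraph $G$ whose chromatic number is much larger than $k$ (for instance obtained from the base case by a suitable blow-up) and fix a topological ordering of $G$. Replace each vertex $v$ of $G$ by a private copy $H_v$ of $H$, and for each arc $uv$ of $G$ add every arc from $V(H_u)$ to $V(H_v)$. The resulting digraph $D$ inherits a topological order from the lexicographic combination of the orders of $G$ and $H$, hence is acyclic, and the projection $V(D)\to V(G)$ witnesses $\chi(D)\ge \chi(G)>k$.

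The heart of the argument is the block-count analysis for an arbitrary oriented cycle $C$ in $D$. If $C$ is contained in a single copy $H_v$, the inductive hypothesis gives at least $b-1$ blocks, and one extra block has to be extracted either from interaction with the outer structure or by strengthening the inductive statement. The strategy I would follow is to carry a stronger invariant along the induction: prove that $D_{k,b}$ exists together with a distinguished source--sink pair $(s,t)$ such that every oriented cycle has $\ge b$ blocks \emph{and} every oriented path from $s$ to $t$ has $\ge b$ blocks. This strengthened statement then feeds back into the substitution: any cycle of $D$ visiting several copies projects to a closed walk of $G$, and each ``transition'' between copies must cross a copy of $H$ between its distinguished endpoints, thereby inheriting the required additional blocks from the inner copies together with the turnarounds of the outer closed walk.

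The main obstacle is exactly this block bookkeeping: carefully arguing that both cycles confined to one copy and cycles crossing several copies produce at least $b$ blocks, which requires selecting the ``plug-in'' points of each $H_v$ so that forward arcs between copies cannot short-circuit a zigzag inside a single copy. Once that is settled, the construction gives a $D_{k,b}$ whose chromatic number grows as a tower-type function of $k$ and $b$, consistent with what one expects from such Erd\H{o}s-type acyclic gadgets.
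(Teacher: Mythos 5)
There is a genuine gap, and it sits exactly where you flagged it; the bookkeeping you postpone is not a technicality but the whole theorem. Note first that the present paper does not prove Theorem~\ref{dkb} at all: it quotes it from \cite{CHLN16}, so your argument must stand on its own. It does not, for two reasons. First, the construction as you define it already violates the block condition: since $\chi(H)>k\geq 1$, the copy $H_u$ contains an arc $x\to y$, and for any arc $uv$ of $G$ and any $z\in V(H_u')$ with $H_{u'}=H_v$ your complete join gives the arcs $x\to z$ and $y\to z$; together with $x\to y$ these form a cycle with exactly two blocks. Hence for every $b\geq 3$ the digraph you build contains a $2$-block cycle, independently of how cycles visiting several copies are analysed. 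Any repair that thins the join (e.g.\ attaching copies only through distinguished vertices $s,t$) removes this obstruction but then the substitution no longer contains a one-vertex-per-copy copy of $G$, so the claimed inequality $\chi(D)\geq\chi(G)$ evaporates and large chromatic number has to be argued differently.

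Second, and more structurally, the induction on $b$ cannot work in the direction you set it up: a cycle contained in a single copy $H_v$ interacts with nothing outside that copy, so the only guarantee available for it is the inductive one, namely $b-1$ blocks, and no strengthening of the hypothesis about $s$--$t$ paths of $H$ changes the block count of a cycle that never leaves $H_v$. In other words, the pieces you substitute must already satisfy the full $b$-block property; what can be boosted by a composition argument is the chromatic number, not the number of blocks. This is why the known proof (in \cite{CHLN16}) fixes $b$ and builds $D_{k,b}$ by induction on the chromatic number, designing the connections between old pieces and the new part so that every cycle meeting the new part is forced to change direction at least $b$ times, while cycles inside old pieces are handled by the induction hypothesis verbatim. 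To salvage your approach you would have to reorganise it along those lines: fix $b$, induct on $k$, and prove a lemma guaranteeing that every cycle using the connector structure acquires at least $b$ blocks; the base case and the acyclicity argument you give are fine and would survive such a reorganisation.
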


On the other hand, Cohen et al.  \cite{CHLN16} proved conjecture for cycles with two blocks and the antidirected cycle of length $4$.
More precisely, denoting by $C(k,\ell)$ the cycle on two blocks, one of length $k$ and the other of length $\ell$,
 Cohen et al. \cite{CHLN16} proved the following result.
\begin{theorem}\label{th:ckl}
Every strong digraph with chromatic number at least $O((k+\ell)^4)$ contains a subdivision of $C(k,\ell)$.
\end{theorem}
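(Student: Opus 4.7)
The goal is to find a subdivision of $C(k,\ell)$ in $D$, i.e.\ two internally vertex-disjoint dipaths from a common vertex $x$ to a common vertex $y$, of lengths at least $k$ and at least $\ell$ respectively. The plan is to combine Bondy's theorem (Theorem~\ref{thm:bondy}) with an ear-based chromatic argument.

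First, I would apply Theorem~\ref{thm:bondy} to $D$: since $\chi(D) = \Omega((k+\ell)^4)$, one obtains a directed cycle $\Gamma$ in $D$ of length $L \geq \chi(D)$, far exceeding $k+\ell$. Label $\Gamma = c_0 c_1 \dots c_{L-1}$ cyclically, and for $c_i, c_j \in V(\Gamma)$ let $\Gamma[c_i,c_j]$ denote the forward $\Gamma$-dipath from $c_i$ to $c_j$. The task then reduces to producing an \emph{ear} of $\Gamma$ — a dipath $P$ from some $x \in V(\Gamma)$ to some $y \in V(\Gamma)$ whose interior avoids $V(\Gamma)$ — with $|P| \geq k$ and $|\Gamma[x,y]| \geq \ell$ (or the symmetric version with $k$ and $\ell$ exchanged): such a $P$ together with $\Gamma[x,y]$ is exactly the required subdivision of $C(k,\ell)$.

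To force the existence of a good ear I would argue by contradiction and chromatic bookkeeping. Assume that every ear of $\Gamma$ is either short (length $< k$) or has endpoints close on $\Gamma$ in the forward direction ($|\Gamma[x,y]| < \ell$). Cover $\Gamma$ by $O(L/\ell)$ forward arcs of length $\ell$, viewed as ``windows''. For each window $W$, the set of vertices off $\Gamma$ whose ears attach inside $W$ induces a subdigraph whose chromatic number can be bounded, via a Gallai-Roy-type argument, by a polynomial in $k+\ell$ (using that every such ear has length $< k$). Summing over the $O(L/\ell)$ windows and iterating over successive generations of the ear decomposition of $D$ rooted at $\Gamma$ should produce a bound of the form $\chi(D) = o(L)$, which contradicts $\chi(D) \geq L$ for $L$ sufficiently large in $k+\ell$.

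The main obstacle is this last chromatic accounting. Two subtleties make it delicate. First, although $C(k,\ell)$ is symmetric in $k$ and $\ell$, the ear has a fixed direction, so we need $\Gamma[x,y]$ and not $\Gamma[y,x]$ to be long; this forces $L$ to be considerably larger than $k+\ell$. Second, in the ear decomposition of a strong digraph ears may be chained recursively through one another, so chromatic contributions accumulate across layers, and bounding this accumulation requires more than a one-shot ear argument. These two considerations — one pushing $L$ to a polynomial in $k+\ell$ via Bondy, the other adding a further polynomial factor in the chromatic bookkeeping — combine to produce the quartic dependence $O((k+\ell)^4)$ in the final bound.
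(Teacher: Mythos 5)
Theorem~\ref{th:ckl} is not proved in this paper: it is quoted from \cite{CHLN16} (and the bound has since been improved in \cite{KKPM}), so the relevant comparison is with the general method this paper uses for $B(k,1;1)$ and $B(k,1;k)$. Your reduction of the problem to finding an ear $P$ of the long cycle $\Gamma$ with $|P|\geq k$ and $|\Gamma[x,y]|\geq \ell$ is correct, but the heart of your argument, the ``chromatic bookkeeping'', is exactly the part that is missing, and as sketched it cannot work. First, the contradiction is set up the wrong way round: Bondy's theorem gives a cycle of length $L\geq \chi(D)$, not $\chi(D)\geq L$, so establishing $\chi(D)=o(L)$ contradicts nothing ($L$ may simply be much larger than $\chi(D)$). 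What is needed is a bound on $\chi(D)$ depending on $k+\ell$ alone, to contradict the hypothesis $\chi(D)=\Omega((k+\ell)^4)$. Second, summing a per-window estimate over the $O(L/\ell)$ windows yields a quantity that grows with $L$, which is useless for that purpose; and the passage through ``successive generations'' of ears composes multiplicatively (as in Lemmas~\ref{lem:decomp} and~\ref{lem:contrac}), not additively, so the accumulation is not controlled by a sum.

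Third, and most fundamentally, bounding the chromatic number of the part of $D$ off $\Gamma$ by applying a Gallai--Roy bound to individual ears is not sound: your assumption still allows ears that are long but have close endpoints, and in any case a union of many short ears can have arbitrarily large chromatic number --- controlling the chromatic number of such unions is precisely the difficulty in all results of this type. This is why the present paper (for $B(k,1;1)$ and $B(k,1;k)$) does not work with one long cycle and its ears, but instead takes a \emph{maximal} nice (resp.\ $k$-suitable) collection of long cycles, proves structural lemmas restricting how cycles of a component can be connected (Lemmas~\ref{lem:headphone}, \ref{lem:dis}, \ref{lem:A}), bounds the chromatic number of each component by delicate arguments (Lemmas~\ref{lem:compo1} and~\ref{lem:DC}, via Lemmas~\ref{min} and~\ref{max}), bounds the contracted digraph using Bondy's theorem together with maximality of the collection (Lemma~\ref{lem:reduce}, Claim~\ref{cl:nocycle}), and only then combines everything through Lemma~\ref{lem:contrac}. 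Your sketch would need an analogue of this entire machinery; at present the decisive step is asserted rather than proved.
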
 
The bound has recently been improved to $O((k+\ell)^2)$  by Kim et al. \cite{KKPM}.

\medskip

A {\it $p$-spindle} is the union of $k$ internally disjoint $(x,y)$-dipaths for some vertices $x$ and $y$. Vertex $a$ is said to be the {\it tail} of the spindle and $b$ its {\it head}.
A {\it $(p+q)$-bispindle} is the internally disjoint union of a $p$-spindle
with tail $x$ and head $y$ and a  $q$-spindle with tail $y$ and head $x$.
In other words, it is the union of $k_1$  $(x,y)$-dipaths and $k_2$ $(y,x)$-dipaths, all of these dipaths being pairwise internally disjoint.
Note that $2$-spindles are the cycles with two blocks and the $(1+1)$-bispindles are the directed cycles.
In this paper, we generalize this and study the existence of spindles and bispindles in strong digraphs with large chromatic number.
First, we give a construction of digraphs with arbitrarily large chromatic number that contains no $3$-spindle and no $(2+2)$-bispindle. 
Therefore, the most we can expect in all strongly connected digraphs with large chromatic number are $(2+1)$-bispindle.
Let $B(k_1,k_2;k_3)$ denote the digraph formed by three internally disjoint paths between two vertices $x,y$,
two $(x,y)$-directed paths, one of size at least $k_1$, the other of size at least $k_2$, and one $(y,x)$-directed path of size at least $k_3$.
We conjecture the following.

\begin{conjecture}\label{conj:3chemins}
There is a function $g:\mathbb{N}^3 \rightarrow \mathbb{N}$ such that  every strong digraph with chromatic number at least $g(k_1,k_2,k_3)$ contains
a subdivision of $B(k_1,k_2;k_3)$.
\end{conjecture}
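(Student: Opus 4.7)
The plan is to decompose the desired $B(k_1,k_2;k_3)$ into a $2$-spindle (two internally disjoint $(x,y)$-dipaths of lengths at least $k_1$ and $k_2$) together with a long $(y,x)$-dipath of length at least $k_3$ internally disjoint from the spindle, and to attack the two pieces in sequence.

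\textbf{Step 1 (long $2$-spindle).} Start with a strong digraph $D$ of chromatic number at least $g(k_1,k_2,k_3)$ for a sufficiently fast-growing $g$. Applying Theorem~\ref{th:ckl} (or the improved $O((k_1+k_2)^2)$ bound of Kim et al.) yields a subdivision of $C(k_1,k_2)$ in $D$, which is exactly a $2$-spindle $S$ with tail $x$, head $y$ and internally disjoint $(x,y)$-dipaths $P_1,P_2$ of lengths at least $k_1$ and $k_2$ respectively. Choosing such an $S$ of minimum size, we may further assume $|V(S)| \le k_1+k_2+2$.

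\textbf{Step 2 (attaching the back-dipath).} Set $D' := D - (V(S)\setminus\{x,y\})$; removing at most $k_1+k_2$ vertices reduces the chromatic number by at most that many, so $\chi(D')$ is still large. We now want a $(y,x)$-dipath in $D'$ of length at least $k_3$. If $D'$ contained $x$ and $y$ in a common strong component of large chromatic number, then Bondy's theorem (Theorem~\ref{thm:bondy}) applied to that component would yield a long directed cycle through both $x$ and $y$ and hence the required back-dipath, completing the proof.

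\textbf{Main obstacle.} The difficulty is that removing $V(S)\setminus\{x,y\}$ from $D$ may destroy strong connectivity between $x$ and $y$, so large chromatic number in $D'$ does not by itself force a long $(y,x)$-dipath; Theorem~\ref{dkb} already shows acyclic digraphs of arbitrarily large chromatic number exist. A promising workaround is to \emph{couple} the two steps: first invoke Bondy's theorem to find a directed cycle $C$ of length at least $k_1+k_3$, pick $x,y$ on $C$ splitting it into arcs of lengths at least $k_1$ and at least $k_3$, and only then search for a second internally disjoint $(x,y)$-dipath of length at least $k_2$ inside $D-(V(C)\setminus\{x,y\})$. This second search must simultaneously guarantee existence, internal disjointness, and length of an $(x,y)$-dipath in a digraph where $x \to y$ reachability may be fragile. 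Producing such a dipath, perhaps by iterating Bondy on an appropriately chosen strong subdigraph adjacent to $C$ or by a BFS-layering argument from $x$ in $D-(V(C)\setminus\{x,y\})$ that also controls which layer $y$ lands in, is where I expect genuinely new ideas to be needed; this is precisely what distinguishes the full conjecture from its qualitative weakening proved in the paper (only one $(x,y)$-dipath long, together with a long back-dipath).
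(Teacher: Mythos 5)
This statement is a conjecture in the paper: the authors do not prove it, and your proposal does not prove it either, as you yourself acknowledge in your final paragraph. Your Step 1/Step 2 decomposition has two concrete failures. First, the reduction "choosing $S$ of minimum size, we may assume $|V(S)|\le k_1+k_2+2$" is false: a minimum-size subdivision of $C(k_1,k_2)$ present in $D$ can be arbitrarily large (take $D$ of large girth, as in Erd\H{o}s-type constructions; every cycle, hence every two-block cycle, is then long), so deleting $V(S)\setminus\{x,y\}$ may remove unboundedly many vertices and $\chi(D')$ need not remain large. Second, and more fundamentally, even if $\chi(D')$ were large, large chromatic number without strong connectivity gives no directed cycle at all (Theorem~\ref{dkb}), and nothing guarantees that $x$ and $y$ lie in a common strong component of $D'$ of large chromatic number; your coupled variant (first a long cycle via Bondy, then a second internally disjoint $(x,y)$-dipath of length at least $k_2$ avoiding the cycle's interior) faces the same reachability problem for the second forward path. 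This is exactly the missing idea, and you correctly flag it as such, so the proposal is an honest sketch of an obstacle rather than a proof.

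For comparison, the paper handles only the case $k_2=1$ (Theorems~\ref{th:P11k} and~\ref{th:main}), and by a quite different route: it argues by contrapositive, assuming $D$ is $B(k,1;1)$- or $B(k,1;k)$-subdivision-free, takes a \emph{maximal} nice (respectively $k$-suitable) collection $\mathcal{C}$ of long directed cycles, shows via structural lemmas that each component of $\mathcal{C}$ induces a subdigraph of bounded chromatic number, shows via Bondy's theorem and maximality that the contraction $D_{\mathcal{C}}$ also has bounded chromatic number, and concludes with Lemma~\ref{lem:contrac}. In other words, rather than trying to grow the bispindle greedily (where each extraction step can destroy the connectivity needed for the next), the paper bounds $\chi$ globally in the absence of the subdivision. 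Whether either approach can be pushed to $k_2\ge 2$ remains open; the paper's constructions for $3$-spindles and $(2+2)$-bispindles show the conjectured $B(k_1,k_2;k_3)$ with three paths is the strongest structure one could hope for.
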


As an evidence, we prove this conjecture for $k_2=1$ and arbitrary $k_1$ and $k_3$. In Section~\ref{sec:k,1,1}, we first investigate the case $k_2=k_3=1$. We first prove in Proposition~\ref{prop:p211} that very strong digraph $D$ with $\chi(D)>3$ contains a subdivision of $B(2,1;1)$. We then prove the following.

\begin{theorem}\label{th:P11k}
Let $k \geq 3$ be an integer and let $D$ be a strong digraph. If $\chi(D) >  (2k-2)(2k-3)$, then $D$ contains a subdivision of $B(k,1;1)$.
\end{theorem}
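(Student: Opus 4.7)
The plan is to argue by contradiction: assume that $D$ is a strong digraph with $\chi(D) > (2k-2)(2k-3)$ containing no subdivision of $B(k,1;1)$. The strategy is to extract a long directed path via Theorem~\ref{thm:gallairoy} and close part of it into a dicycle via strong connectivity; this cycle provides two of the three dipaths of $B(k,1;1)$, and the remaining chromatic surplus is used to produce the third.

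Concretely, take a longest directed path $P = v_0 v_1 \cdots v_m$ in $D$; by Theorem~\ref{thm:gallairoy} we have $m \geq \chi(D) - 1 > (2k-2)(2k-3) - 1$. By maximality of $P$, $N^+(v_m) \subseteq V(P)$, and by strong connectivity this set is non-empty. Let $j^*$ be the smallest index with $v_m v_{j^*} \in A(D)$, so that $C = v_{j^*} v_{j^*+1} \cdots v_m v_{j^*}$ is a dicycle. If $m - j^* \geq k$, set $x = v_{j^*}$ and $y = v_m$: the subpath of $P$ from $v_{j^*}$ to $v_m$ is an $(x,y)$-dipath of length at least $k$, and the arc $v_m v_{j^*}$ is a $(y,x)$-dipath of length $1$. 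What remains is a second $(x,y)$-dipath internally disjoint from both. I would seek it inside $D \setminus \{v_{j^*+1}, \ldots, v_{m-1}\}$: either such a dipath exists and we are done, or the removed vertices form an $(x,y)$-cut, in which case a Menger-style analysis, or a different choice of back-arc or of $P$ itself, should yield the required dipath (or else a chromatic contradiction).

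If instead $m - j^* < k$, then all out-neighbors of $v_m$ lie among $\{v_{m-k+1}, \ldots, v_{m-1}\}$, and the symmetric analysis at $v_0$ (using $N^-(v_0) \subseteq V(P)$) confines its in-neighbors to $\{v_1, \ldots, v_{k-1}\}$ in the analogous ``bad'' sub-case. These highly localized boundary conditions should permit an explicit proper coloring of $D$ using at most $(2k-2)(2k-3)$ colors (combining a residue-mod-$(2k-3)$ coloring along $P$ with a $(2k-2)$-coloring of a reduced structure), contradicting the chromatic hypothesis. The main obstacle is producing the second internally disjoint $(x,y)$-dipath in the first case: this is where the factor $(2k-3)$ of the bound is genuinely needed, and I expect the argument to combine a careful application of Theorem~\ref{thm:2blocks} to a residual subdigraph (whose threshold exactly matches $2k-3$) with Menger-type routing ensuring internal disjointness from the long cycle $C$.
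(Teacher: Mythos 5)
Your proposal is a plan rather than a proof, and both of its branches have genuine gaps exactly at the points you defer. In the case $m-j^*\geq k$, producing the second $(x,y)$-dipath internally disjoint from $P[v_{j^*},v_m]$ \emph{is} the whole difficulty: strong connectivity gives no such path (every route from $x$ back to $y$ may re-enter the interior of $P[v_{j^*},v_m]$), and the hypothesis $\chi(D)>(2k-2)(2k-3)$ does not transfer to the residual digraph $D-\{v_{j^*+1},\dots,v_{m-1}\}$, so neither the ``Menger-style analysis'' nor the appeal to Theorem~\ref{thm:2blocks} is actually carried out; nothing in your outline rules out that all of the chromatic mass of $D$ attaches to the cycle $C$ in such a way that every $(x,y)$-dipath uses interior vertices of $P[v_{j^*},v_m]$ (note also that an ear whose endpoints are close together on $C$ is useless: you need the attachment points at distance at least $k$ along the cycle, which a single application of Lemma~\ref{lem:diear} or of Menger does not provide). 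In the case $m-j^*<k$ the gap is worse: knowing that $N^+(v_m)$ lies among the last $k-1$ vertices of $P$ (and symmetrically at $v_0$) constrains only two vertices of $D$ and says nothing about the rest of the digraph, so there is no reason a residue colouring along one longest path extends to a proper $(2k-2)(2k-3)$-colouring of all of $D$; the claimed ``chromatic contradiction'' is unsupported, and no choice of the two factors $2k-2$ and $2k-3$ is explained by the construction you sketch.

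For comparison, the paper resolves exactly these two difficulties by a global decomposition rather than by one path and one back-arc: it takes a \emph{maximal} nice collection ${\cal C}$ of directed cycles of length at least $2k-2$ pairwise intersecting in at most one vertex; it shows that each component ${\cal S}$ of ${\cal C}$ satisfies $\chi(D[{\cal S}])\leq 2k-2$ (long chords of a cycle would give $B(k,1;1)$ directly, so Brooks' theorem applies together with Proposition~\ref{prop:tournoi}, and Lemma~\ref{lem:headphone} forbids dipaths between distinct cycles of a component); it shows via Bondy's theorem and the maximality of ${\cal C}$ that the contracted digraph $D_{\cal C}$ satisfies $\chi(D_{\cal C})\leq 2k-3$; and it multiplies the two bounds with Lemma~\ref{lem:contrac}. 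Your outline contains none of these ingredients (no control of chords, no mechanism excluding paths between intersecting cycles, no maximality argument for a family of long cycles), so as it stands it does not constitute a proof and would need essentially the paper's machinery, or a genuinely new idea, to close the first case alone.
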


In Section~\ref{sec:main}, using the same approach but in a more complicated way, we prove our main result: 

\begin{theorem}\label{th:main}
There is a constant $\gamma_k$ such that if $D$ is a strong digraph with $\chi(D) > \gamma_k$, then $D$ contains a subdvision of $B(k,1;k)$.
\end{theorem}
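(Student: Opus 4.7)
The plan is to combine the technique behind Theorem~\ref{th:P11k} (the $B(k,1;1)$ case) with a BFS layering of $D$ rooted at a vertex $r$, so as to confine the arc $xy$ and the long $(x,y)$-dipath to a chromatically-rich pair of consecutive levels, while routing the long $(y,x)$-dipath outside those levels, through the root. Fix $r\in V(D)$ and, for each $i\ge 0$, set $L_i=\{v : d_D(r,v)=i\}$. Since $D$ is strong, these sets partition $V(D)$, and every arc $uv$ satisfies $\mathrm{lev}(v)\le\mathrm{lev}(u)+1$. A standard pigeonhole on $\chi$ (of the kind used in the proof of Theorem~\ref{thm:bondy}) shows that, once $\gamma_k$ is chosen large enough, there exists an index $i^\ast\ge k$ with $\chi(D[L_{i^\ast}\cup L_{i^\ast+1}])>(2k-2)(2k-3)$. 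If the naive choice of $r$ does not produce a deep such $i^\ast$, we re-root at a vertex lying far from $r$ along a long BFS branch, which exists thanks to Theorem~\ref{thm:gallairoy}.

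Inside $D_0:=D[L_{i^\ast}\cup L_{i^\ast+1}]$ we reproduce the argument of Theorem~\ref{th:P11k} to extract an arc $xy$ together with an internally-disjoint $(x,y)$-dipath $P$ of length at least $k$, both contained in $L_{i^\ast}\cup L_{i^\ast+1}$. Since $D_0$ need not be strong, we cannot apply Theorem~\ref{th:P11k} as a black box; instead, we mimic its longest-dipath reasoning directly in $D_0$, starting from a dipath of length at least $\chi(D_0)-1$ provided by Theorem~\ref{thm:gallairoy} and searching for a suitable closing chord at one of its endpoints. For the long $(y,x)$-dipath, the BFS-out tree from $r$ furnishes a dipath $R_x$ from $r$ to $x$ of length at least $i^\ast\ge k$ all of whose internal vertices lie in levels strictly below $i^\ast$; by strong connectivity, there is also a dipath $R_y$ from $y$ to $r$, which we may take to be shortest. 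The concatenation $R_y\cdot R_x$ yields a $(y,x)$-walk of length at least $i^\ast\ge k$, from which we extract a $(y,x)$-dipath $Q$. Internal disjointness of $Q$ from $P$ and from the arc $xy$ will follow from the level separation: the internal vertices of $P$ lie in $L_{i^\ast}\cup L_{i^\ast+1}$, whereas the internal vertices of $Q$ can be placed strictly below, by the layering of $R_x$ and a careful surgery on $R_y$.

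The main obstacle is precisely this last point: ensuring that $R_y$ can be chosen so that, together with $R_x$, its internal vertices lie outside $L_{i^\ast}\cup L_{i^\ast+1}$. A shortest $(y,r)$-dipath need not behave well with respect to out-distance levels, so the argument will likely require a secondary layering by in-distances to $r$, and a careful choice of the bicell $L^{+}_{i^\ast}\cap L^{-}_{j^\ast}$ (in the two-way BFS) where the arc $xy$ is sought. The constant $\gamma_k$ will be inflated accordingly, roughly to order $k\cdot(2k-2)(2k-3)$ or a small power thereof, to accommodate simultaneously the depth requirement of Step~1 and the disjointness constraint of Step~3.
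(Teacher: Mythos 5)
Your Step 1 already fails. You define the levels by \emph{directed} distance, $L_i=\{v:d_D(r,v)=i\}$, and the only constraint on an arc $uv$ is $\mathrm{lev}(v)\le \mathrm{lev}(u)+1$: arcs may descend arbitrarily many levels, so edges of the underlying graph are \emph{not} confined to equal or consecutive levels and no pigeonhole transfers $\chi(D)$ to a pair $L_{i^\ast}\cup L_{i^\ast+1}$. Concretely, let $D$ consist of the directed Hamiltonian cycle $v_0v_1\cdots v_{n-1}v_0$ together with all backward arcs $v_iv_j$ for $i>j$. This digraph is strong, $d_D(v_0,v_i)=i$, so each level is a single vertex and every $D[L_i\cup L_{i+1}]$ has chromatic number $2$, while $\chi(D)=n$ since the underlying graph is complete. (If you instead take levels in the underlying graph, the pigeonhole holds, but then the BFS tree no longer gives you the \emph{directed} path $R_x$ from $r$ to $x$ that Step 3 needs.) Step 2 has an independent and equally serious gap: $D_0=D[L_{i^\ast}\cup L_{i^\ast+1}]$ is an essentially arbitrary, generally non-strong digraph, and large chromatic number of a non-strong digraph does \emph{not} force an arc $xy$ together with an internally disjoint $(x,y)$-dipath of length at least $k$ — that configuration is a cycle with exactly two blocks, and Theorem~\ref{dkb} provides acyclic digraphs of arbitrarily large chromatic number in which every cycle has at least four blocks. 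The proof of Theorem~\ref{th:P11k} uses strong connectivity essentially (Bondy's Theorem~\ref{thm:bondy}, maximal nice collections of long cycles, directed ears), so ``mimicking its longest-dipath reasoning'' with Theorem~\ref{thm:gallairoy} plus a ``closing chord'' has no basis: a long dipath in a high-chromatic digraph may admit no chord at all.

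Even granting Steps 1 and 2, Step 3 is unresolved, as you concede: a shortest $(y,r)$-dipath may re-enter $L_{i^\ast}\cup L_{i^\ast+1}$ and meet the internal vertices of $P$, and after extracting a dipath $Q$ from the walk $R_y\cdot R_x$ you can no longer certify either internal disjointness or length at least $k$ (if $R_y$ meets $R_x$ close to $x$, the extracted path is short). The sketched two-way layering does not obviously repair this, and in any case inherits the Step 1 problem. For comparison, the paper avoids local level arguments altogether: it takes a \emph{maximal} $k$-suitable collection $\mathcal{C}$ of directed cycles of length at least $8k$ pairwise intersecting in short subpaths, shows via Bondy's theorem and maximality that the contraction $D_{\mathcal{C}}$ has chromatic number at most $8k$, bounds the chromatic number of each component of $\mathcal{C}$ through a chain of structural lemmas (Lemmas~\ref{lem:dis}--\ref{lem:DC}), and concludes with Lemma~\ref{lem:contrac}; the strong connectivity is consumed only where it is genuinely available, namely to produce long directed cycles in the contracted digraph. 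As it stands, your proposal does not constitute a proof.
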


We prove the above theorem for a huge constant $\gamma_k$. It can easily be lowered. However, we made no attempt to it here for two reasons:
firstly, we would like to keep the proof as simple as possible; secondly using our method, there is no hope to get an optimal or near optimal value for $\gamma_k$.

\medskip

Similar questions with $\chi$ replaced by another graph parameter can be studied.
We refer the reader to \cite{AC+16} and \cite{CHLN16} for more exhaustive discussions on such questions.
Let us just give one result proved by Aboulker et al. \cite{AC+16} which can be seen as an analogue to Conjecture~\ref{conj:3chemins}.

\begin{theorem}[Theorem 28 in \cite{AC+16}]
Let $k_1,k_2,k_3$ be  positive integers with $k_1 \ge k_2$.
Let $D$ be a digraph with $\delta^+(D) \geq 3k_1+2k_2+k_3-5$. Then $D$ contains $B(k_1,k_2;k_3)$ as a subdivision.
\end{theorem}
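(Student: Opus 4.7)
The plan is to combine a longest-dipath argument with a careful greedy construction of the three internally disjoint paths, exploiting the bound $\delta^+(D) \ge d$ at every stage, where $d := 3k_1 + 2k_2 + k_3 - 5$. First, I would take a longest directed path $P = v_0 v_1 \cdots v_p$ in $D$. By the standard maximality argument, every out-neighbor of $y := v_p$ lies on $P$, so $y$ has at least $d$ out-neighbors $v_{j_1} < v_{j_2} < \cdots < v_{j_s}$ along $P$ (with $s \ge d$ and hence $p \ge s-1 \ge d-1$). Setting $x := v_{j_1}$, the subpath $P_1 := v_{j_1} v_{j_1+1} \cdots v_p$ of $P$ has length at least $d - 1 \ge k_1$, providing the first $(x,y)$-dipath of the required length.

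Next, I would construct the $(y, x)$-dipath $P_3$ of length $\ge k_3$ and the second $(x, y)$-dipath $P_2$ of length $\ge k_2$, both internally disjoint from $P_1$ and from each other. For $P_3$, I would exploit the fact that among $y$'s many out-neighbors $v_{j_1}, \ldots, v_{j_s}$, the one chosen closest to $y$ (say $v_{j_s}$) has only a handful of its out-edges trapped on the short segment of $P$ strictly between $v_{j_s}$ and $y$; the majority of its out-edges must therefore lead to earlier vertices of $P$ or off $P$, providing a way to ``backtrack'' toward $x$. Iterating this argument produces $P_3$ by zigzagging from $y$ back to $x$. Similarly, I would construct $P_2$ starting from an out-neighbor of $x$ lying outside $V(P_1) \cup V(P_3)$ — such a vertex exists since $x$ has at least $d$ out-neighbors while only $O(k_1 + k_2 + k_3)$ vertices are blocked — and extending it by greedy out-edge selection until $y$ is reached (using that every intermediate vertex has out-degree at least $d$, far exceeding the number of blocked vertices).

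The main obstacle is ensuring that these greedy constructions succeed simultaneously, and this is precisely where the coefficients $3, 2, 1$ of $k_1, k_2, k_3$ originate. Each path's internal vertices impose deletions that consume the out-degree budget of the other two constructions: $P_1$ interferes with the constructions of both $P_2$ and $P_3$ (hence $3k_1$: once for its own length, twice for the interference it causes the other two), $P_2$ interferes only with $P_3$ (hence $2k_2$), and $P_3$ self-interferes only (hence $k_3$); the constant $-5$ reflects that the endpoints $x, y$ are shared across the three paths and are not counted as internal vertices. Verifying that the budget $d$ suffices for this simultaneous construction — in particular, that at every step of every greedy extension the out-degree bound beats the cumulative number of blocked vertices — is the technical heart of the proof; one should choose the order of construction (first $P_1$, then $P_3$ anchored at the out-neighbors of $y$ closest to $y$, then $P_2$) so as to keep the deletions well-balanced and the coefficients tight.
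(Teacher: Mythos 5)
First, note that the paper you are looking at does not prove this statement at all: it is quoted verbatim as Theorem~28 of Aboulker et al.~\cite{AC+16}, so there is no in-paper proof to compare against. Judged on its own, your sketch has genuine gaps that go beyond missing bookkeeping. The most immediate one is structural: having chosen $x:=v_{j_1}$ to be the \emph{earliest} out-neighbour of $y$ on the longest path $P$ and having taken $P_1:=P[x,y]$, you have forced \emph{every} out-neighbour of $y$ to lie on $P_1$. Any $(y,x)$-dipath must start with an arc from $y$, hence its first vertex after $y$ is either $x$ itself (giving length $1<k_3$) or an internal vertex of $P_1$, which violates internal disjointness. In particular the ``zigzag from $v_{j_s}$'' construction of $P_3$ cannot even take its first step: $v_{j_s}$ is already an internal vertex of $P_1$. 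Any correct argument must refrain from using the whole segment $P[v_{j_1},v_p]$ as the first path and instead split the out-neighbourhood of $y$, e.g.\ reserving an arc $yv_{j_b}$ together with a segment $P[v_{j_b},v_{j_a}]$ for the return path and only a later piece of $P$ for $P_1$; that is a different construction from the one you describe, and it is exactly where the real counting starts.

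The second gap is the repeated use of ``greedy out-edge selection until $y$ (resp.\ $x$) is reached''. A minimum out-degree condition guarantees that a greedy path avoiding a small blocked set can be \emph{extended}, but it gives no control whatsoever over \emph{where} the path goes; nothing forces it to ever return to the prescribed endpoint. (Indeed, reaching $x$ from $y$ at all requires some reachability argument, typically a reduction to a terminal strong component, which you never invoke; and even inside a strong digraph, hitting a specific target while avoiding $O(k_1+k_2+k_3)$ forbidden vertices is precisely the difficulty, not a routine consequence of $\delta^+\geq d$.) Consequently both the construction of $P_2$ and the completion of $P_3$ are unjustified, and the closing paragraph explaining the coefficients $3,2,1$ and the constant $-5$ is a plausibility heuristic rather than a verification: you acknowledge yourself that checking the budget ``is the technical heart of the proof'', but that heart is absent. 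As it stands, the proposal establishes only the existence of one long $(x,y)$-dipath plus an arc $yx$, i.e.\ essentially a $B(k_1,1;1)$-type configuration, not $B(k_1,k_2;k_3)$.
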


\section{Definitions and preliminaries}

We follow standard terminology as used in \cite{BoMu08}. We denote by $[k]$ the set of integers $\{1, \dots , k\}$.

\smallskip

Let $F$ be a digraph.
A digraph $D$ is said to be {\it $F$-subdivision-free}, if it contains no subdivision of $F$.

\smallskip

The {\it union} of two digraphs $D_1$ and $D_2$ is the digraph  $D_1\cup D_2$ defined by $V(D_1\cup D_2) = V(D_1)\cup V(D_2)$ and 
$A(D_1\cup D_2) = A(D_1)\cup A(D_2)$.
If ${\cal D}$ is a set of digraphs, we denote by $\bigcup {\cal D}$ the union of the digraphs, i.e. $V(\bigcup {\cal D}) =\bigcup_{D\in {\cal D}} V(D)$
and $A(\bigcup {\cal D}) =\bigcup_{D\in {\cal D}} A(D)$.

\smallskip

Let $P$ be a path. We denote by $s(P)$ its initial vertex and by $t(P)$ its terminal vertex.
If $D$ is a directed path or a directed cycle, then we denote by $D[a,b]$ the subdipath of $D$ with initial vertex $a$ and terminal vertex $b$.
We denote by $D[a,b[$ the dipath $D[a,b] -b$, by $C]a,b]$ the dipath $D[a,b] -a$, and by $D]a,b[$ the dipath $D]a,b[ -\{a,b\}$.
If $P$ and $Q$ are two directed paths such that $V(P)\cap V(Q) =\{s(P)\}=\{t(Q)\}$, the {\it concatenation} of $P$ and $Q$, denoted by $P\odot Q$, is the dipath $P\cup Q$. 

\smallskip


A digraph is {\it connected} (resp. {\it $2$-connected}) if its underlying graph is connected (resp. $2$-connected.
The {\it connected components} of a digraph are the connected components of its underlying graph.
A digraph $D$ is {\it strongly connected} or {\it strong} if for any two vertices $x,y$ there is directed path from $x$ to $y$.
The {\it strong components} of a digraph are its maximal strong subdigraphs.

\smallskip

Let $G$ be a graph or a digraph.
A {\it proper $k$-colouring} of $G$ is a mapping $\phi: V(G) \rightarrow [k]$ such that $\phi(u) \neq \phi(v)$ whenever $u$ is adjacent to $v$. 
$G$ is {\it $k$-colourable} if it admits a proper $k$-colouring. The {\it chromatic number} of $G$, denoted by $\chi(G)$, is the least integer $k$
such that $G$ is $k$-colourable.

A (directed) graph $G$ is {\it $k$-degenerate} if every subgraph $H$ of $G$ has a vertex of degree at most $k$.
The following proposition is well-known.
\begin{proposition}\label{prop:deg}
Every $k$-degenerate (directed) graph is $(k+1)$-colourable.
\end{proposition}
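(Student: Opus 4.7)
The plan is the standard greedy colouring argument via a degeneracy ordering. First, I construct an ordering $v_1, v_2, \ldots, v_n$ of $V(G)$ recursively as follows: once $v_1, \ldots, v_{i-1}$ have been chosen, pick $v_i$ to be any vertex of the induced subgraph $G[\{v_i, v_{i+1}, \ldots, v_n\}]$ whose degree in that subgraph is at most $k$. Such a vertex exists because the property of being $k$-degenerate is inherited by every (induced) subgraph: any subgraph of $G[\{v_i, \ldots, v_n\}]$ is also a subgraph of $G$, hence contains a vertex of degree at most $k$.

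Next, I colour the vertices with colours in $\{1, \ldots, k+1\}$ in the reverse order $v_n, v_{n-1}, \ldots, v_1$. When we reach $v_i$, its already-coloured neighbours all lie in $\{v_{i+1}, \ldots, v_n\}$, and by the choice of $v_i$ in the ordering, the number of such neighbours is at most $k$. So among the $k+1$ available colours at least one is unused on them, and we can legally colour $v_i$. A straightforward induction on $i$ (running from $n$ down to $1$) shows the partial colouring stays proper, proving that $G$ is $(k+1)$-colourable.

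For the ``(directed)'' parenthetical, nothing extra is needed: the paper defines $k$-degeneracy of a digraph in terms of the (undirected) degree of its underlying graph, and the chromatic number of a digraph is defined as the chromatic number of its underlying graph, so the directed case reduces immediately to the undirected one.

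There is no genuine obstacle here—this is a textbook argument. The only point worth being explicit about is the inheritance of $k$-degeneracy under taking subgraphs, which is immediate from the definition and is what guarantees that the recursive extraction procedure never gets stuck.
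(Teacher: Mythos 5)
Your proof is correct: it is the standard greedy colouring along a degeneracy ordering, and the definition of $k$-degeneracy given in the paper (every subgraph has a vertex of degree at most $k$) directly guarantees the ordering exists, as you note. The paper itself states this proposition as well-known and gives no proof, so your argument simply supplies the folklore proof it implicitly relies on, including the correct observation that the directed case reduces to the underlying undirected graph.
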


\begin{theorem}[Brooks]\label{thm:brooks}
Let $G$ be a connected graph.
Then $\chi(G)\leq \Delta(G)$ unless $G$ is a complete graph or an odd cycle.
\end{theorem}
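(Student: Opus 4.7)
The plan is to reduce Brooks' theorem to a clever ordering of the vertices so that greedy colouring with $\Delta := \Delta(G)$ colours succeeds. The low-$\Delta$ cases are immediate: if $\Delta \leq 2$, then $G$ is a single vertex, $K_2$, a path, an even cycle, or an odd cycle, and in each case $G$ is either a listed exception or already satisfies $\chi(G) \leq \Delta$. So assume $\Delta \geq 3$, $G$ connected, and $G$ not complete.

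The main engine will be the following: pick a root $r$, build a BFS spanning tree of $G$ rooted at $r$, and colour greedily in reverse BFS order (leaves first, $r$ last). Every non-root vertex has its BFS-parent still uncoloured at the time it is processed, so it sees at most $\Delta - 1$ previously coloured neighbours and hence at least one of the $\Delta$ colours remains available. The only obstruction is $r$ itself; to overcome it, one needs to arrange either that $r$ has degree less than $\Delta$, or that two neighbours of $r$ have been forced to receive the same colour.

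If $G$ is not $\Delta$-regular, pick $r$ of degree less than $\Delta$ and apply the engine. If $G$ is $\Delta$-regular but has a cut vertex $w$, split $G$ at $w$ into proper induced subgraphs $G_1, G_2$ each containing $w$; in every $G_i$, the vertex $w$ has degree strictly less than $\Delta$, so each $G_i$ is not $\Delta$-regular and, by the previous case, admits a proper $\Delta$-colouring, and after permuting colours so that $w$ receives the same colour in both, we glue the colourings. So we may assume $G$ is $2$-connected and $\Delta$-regular.

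The heart of the proof is to locate three vertices $u_1, u_2, v$ with $vu_1, vu_2 \in E(G)$, $u_1 u_2 \notin E(G)$, and $G - \{u_1, u_2\}$ still connected. Given such a triple, colour $u_1$ and $u_2$ both with colour $1$, run the engine with $r := v$ on $G - \{u_1, u_2\}$, and when $v$ is finally coloured the two neighbours $u_1, u_2$ share colour $1$, so at most $\Delta - 1$ distinct colours appear among $v$'s neighbours and $v$ can be coloured. To find the triple I would split on connectivity: if $G$ is $3$-connected, pick any vertex $u_1$ and some $u_2$ at distance exactly $2$ from $u_1$ (which exists because $G$ is not complete), with a common neighbour $v$; since $G - \{u_1, u_2\}$ remains connected by $3$-connectivity, we are done. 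Otherwise $G$ has a $2$-cut $\{a, b\}$: pick $u_1$ and $u_2$ in different components of $G - \{a, b\}$ both adjacent to the same cut vertex (say $a$), which is possible by $2$-connectivity, and set $v := a$; connectedness of $G - \{u_1, u_2\}$ follows because the remaining cut vertex $b$ still meets every component. The step I expect to be the main obstacle is verifying this last connectedness in the exactly-$2$-connected subcase, particularly in degenerate situations where a side of the $2$-cut consists of very few vertices, which forces a careful choice of $u_1$ and $u_2$ (or an inductive appeal to a smaller piece) to ensure $G - \{u_1, u_2\}$ does not fall apart.
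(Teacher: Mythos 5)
The paper itself does not prove this statement -- Brooks' theorem is quoted there as a classical result -- so your proposal can only be measured against the standard proof, whose outline (greedy colouring in reverse BFS order, the non-regular case, the cut-vertex case, then the search for a triple $u_1,u_2,v$ with $vu_1,vu_2\in E(G)$, $u_1u_2\notin E(G)$ and $G-\{u_1,u_2\}$ connected) you reproduce correctly up to the last step. That last step, however, contains a genuine gap, not just a verification you postponed: in the $2$-connected, $\Delta$-regular, non-$3$-connected case, your selection rule (take $u_1,u_2$ adjacent to the same cut-set vertex $a$ but lying in different components of $G-\{a,b\}$) does not guarantee that $G-\{u_1,u_2\}$ is connected, and the justification you give (``the remaining cut vertex $b$ still meets every component'') is false: $b$ has a neighbour in every component of $G-\{a,b\}$, but after deleting $u_1$ and $u_2$ a component of $C_1-u_1$ may have all of its outside neighbours in $\{u_1,a\}$ and so lose all contact with $b$. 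Concretely, let $X$ and $Y$ be two copies of $K_4$ minus an edge, the two degree-two vertices of $X$ being $y,y'$ and those of $Y$ being $q,q'$; add vertices $a,b,u_1,u_2$ and the edges $au_1,au_2,ay,bu_1,bu_2,bq',u_1y',u_2q$. This graph is cubic, $2$-connected and not complete, $\{a,b\}$ is a $2$-cut with components $C_1=X\cup\{u_1\}$ and $C_2=Y\cup\{u_2\}$, and $u_1\in C_1$, $u_2\in C_2$ are both adjacent to $a$; yet $G-\{u_1,u_2\}$ splits into $\{a\}\cup X$ and $\{b\}\cup Y$. So the failure is not confined to the ``degenerate'' small-side situations you flag: both sides of the $2$-cut here have five vertices.

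The standard way to close the gap chooses the pair more carefully than by components of $G-\{a,b\}$: take $v:=a$, a vertex of a $2$-cut, so that $G-v$ is connected but has a cut vertex, and pick $u_1,u_2$ to be neighbours of $v$ lying in two \emph{distinct leaf blocks} of $G-v$ and distinct from the cut vertices of $G-v$ (such neighbours exist, for otherwise the cut vertex attaching a leaf block would be a cut vertex of $G$). Then $u_1u_2\notin E(G)$ since a non-cut vertex lies in a single block; $(G-v)-\{u_1,u_2\}$ is connected because one deletes a non-cut vertex from each of two different leaf blocks; and since $\deg v=\Delta\geq 3$, $v$ retains a neighbour, so $G-\{u_1,u_2\}$ is connected. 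In the example above this rule forces the choice $\{y,u_2\}$ instead of $\{u_1,u_2\}$, and $G-\{y,u_2\}$ is indeed connected. With this replacement (the rest of your argument unchanged) you obtain the standard proof of Brooks' theorem.
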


The following easy lemma is well-known.

\begin{lemma}\label{lem:decomp}
Let $D_1$ and $D_2$ be two digraphs.
$\chi(D_1\cup D_2) \leq \chi(D_1)\times \chi(D_2)$.
\end{lemma}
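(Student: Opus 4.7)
The plan is to use the standard product-coloring trick. Let $\chi_1=\chi(D_1)$ and $\chi_2=\chi(D_2)$, and fix proper colourings $\phi_1:V(D_1)\to[\chi_1]$ and $\phi_2:V(D_2)\to[\chi_2]$. These are guaranteed to exist by the definition of chromatic number applied to the underlying graphs of $D_1$ and $D_2$.

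Next I extend each $\phi_i$ to all of $V(D_1\cup D_2)=V(D_1)\cup V(D_2)$ by assigning a fixed but arbitrary colour (say $1$) to every vertex outside $V(D_i)$. This extension is not required to be proper anywhere; it is only a bookkeeping device so that the pair $(\phi_1(v),\phi_2(v))$ is defined for every $v\in V(D_1\cup D_2)$.

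I then define the colouring $\phi:V(D_1\cup D_2)\to[\chi_1]\times[\chi_2]$ by $\phi(v)=(\phi_1(v),\phi_2(v))$. Clearly $\phi$ uses at most $\chi_1\cdot\chi_2$ distinct colours, which matches the bound we are aiming for.

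It remains to check properness. Let $uv$ be an edge of the underlying graph of $D_1\cup D_2$. By definition of the union, $uv$ is an edge of the underlying graph of $D_1$ or of $D_2$; without loss of generality, say of $D_1$. Then both $u$ and $v$ belong to $V(D_1)$, so their first coordinates are the genuine values $\phi_1(u)$ and $\phi_1(v)$, which differ because $\phi_1$ is a proper colouring of $D_1$. Hence $\phi(u)\neq\phi(v)$. No step here poses any real obstacle; the only point worth stating carefully is that vertices lying in only one $V(D_i)$ cannot contribute edges that escape the corresponding $\phi_i$, which is immediate from the definition of the union.
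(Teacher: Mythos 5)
Your proof is correct: the product colouring $v\mapsto(\phi_1(v),\phi_2(v))$ works because every arc of $D_1\cup D_2$ lies in $D_1$ or in $D_2$, and your arbitrary extension of each $\phi_i$ outside $V(D_i)$ is harmless for exactly the reason you state. The paper leaves this lemma unproved as ``well-known,'' and your argument is precisely the standard one (the same product-colouring device the paper uses explicitly to prove Lemma~\ref{lem:contrac}), so there is nothing to add.
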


\begin{lemma}\label{lem:contrac}
Let $D$ be a digraph, $D_1 \dots D_l$ be disjoint subdigraphs of $D$ and $D'$ the digraph obtained by contracting each $D_i$ into
one vertex $d_i$. Then $\chi(D) \leq \chi(D')\cdot \max\{\chi(D_i) \mid i \in [l]\}$.
\end{lemma}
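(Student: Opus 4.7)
The plan is to combine an optimal proper coloring of $D'$ with optimal proper colorings of each $D_i$ into a single proper coloring of $D$ via a product construction. Set $c = \chi(D')$ and $m = \max\{\chi(D_i) : i \in [\ell]\}$. Fix a proper coloring $\phi : V(D') \to [c]$ and, for each $i \in [\ell]$, a proper coloring $\psi_i : V(D_i) \to [m]$. Then I would define $\gamma : V(D) \to [c] \times [m]$ by setting $\gamma(v) = (\phi(d_i),\psi_i(v))$ if $v \in V(D_i)$, and $\gamma(v) = (\phi(v),1)$ otherwise (noting that in the latter case $v$ is also a vertex of $D'$). Since the image lies in $[c]\times[m]$, the coloring uses at most $cm$ colors, which is the desired bound.

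The remaining task is to verify that $\gamma$ is proper, and I would split this into two cases according to an arc $uv \in A(D)$. If both endpoints lie in the same $D_i$, then $\psi_i(u) \neq \psi_i(v)$, so the second coordinates differ. Otherwise, $u$ and $v$ lie in distinct parts of the partition $\{V(D_1),\dots,V(D_\ell)\}\cup\{\{v\} : v\notin \bigcup_i V(D_i)\}$, and the arc $uv$ descends, after contraction, to an arc of $D'$ between two distinct vertices; since $\phi$ is proper on $D'$, the first coordinates of $\gamma(u)$ and $\gamma(v)$ differ.

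I do not anticipate any genuine obstacle: this is a standard product-coloring argument. The only minor point of care is assigning a well-defined second coordinate to vertices outside every $D_i$, which is handled by the constant choice $1$; this is harmless because those vertices are already separated from all of their neighbors by the first coordinate coming from $\phi$.
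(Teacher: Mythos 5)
Your proposal is correct and uses essentially the same product-colouring construction as the paper: pair the colour of a vertex's image in $D'$ with its colour inside its own $D_i$ (or a dummy constant outside the $D_i$'s), and check properness by distinguishing arcs inside a single $D_i$ from arcs between distinct parts. No meaningful difference from the paper's argument.
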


\begin{proof}

Set $k_1 = \max\{\chi(D_i) \mid i \in [l]\}$ and $k_2 = \chi(D')$. For each $i$, let $\phi_i$ be a proper colouring of $D_i$ using colours in $[k_1]$ and let
$\phi'$ be a proper colouring of $D'$ using colours in $[k_2]$. 
Define $\phi : V(D) \rightarrow [k_1] \times [k_2]$ as follows. If $x$ is a vertex belonging to some $D_i$, then $\phi(x) = (\phi_i(x), \phi'(d_i))$, else $\phi(x) =(1,\phi'(x))$. 
Let $x$ and $y$ be adjacent vertices of $D$. If they belong to the same subdigraph $D_i$, then $\phi_i(x) \not = \phi_i(y)$ and so $\phi(x) \not = \phi(y)$. If they do not belong
to the same component, then the vertices corresponding to these vertices in $D_{\mathcal{C}}$ are adjacent and so $\phi(x) \not = \phi(y)$. 
Thus $\phi$ is a proper colouring of $D$ using $k_1\cdot k_2$ colours. 
\end{proof}

The {\it rotative tournament on $2k-1$ vertices}, denoted by $R_{2k-1}$, is the tournament with vertex set $\{v_1, \dots , v_{2k-1}\}$ in which $v_i$ dominates $v_j$ if and only if $1\leq j-i \leq k-1$ (indices are modulo $2k-1$).

\begin{proposition}\label{prop:tournoi}
Every strong tournament of order $2k-1$ contains a $B(k,1;1)$-subdivision.
\end{proposition}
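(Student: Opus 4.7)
The plan is to apply Camion's theorem to $T$, yielding a Hamilton cycle $C = v_1 v_2 \cdots v_{2k-1} v_1$, and then to find a single arc of $T$ that, together with two subpaths of $C$, realizes $B(k,1;1)$. For distinct vertices $v_a,v_b$ on $C$, let $d_C(v_a,v_b)$ denote the length of the subpath $C[v_a,v_b]$ of $C$ from $v_a$ to $v_b$.

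The first (easy) case is when some arc $v_a v_b$ of $T$ satisfies $d_C(v_a,v_b) \ge k$. Then $P_1 = C[v_a,v_b]$ (length $\ge k$), $P_2 = v_a v_b$ (length $1$), and $P_3 = C[v_b,v_a]$ (length $2k-1-d_C(v_a,v_b) \in [1,k-1]$) are three pairwise internally disjoint paths: $P_1$ and $P_3$ together form $C$ and share only $v_a,v_b$, while $P_2$ has no internal vertex. Thus these three paths form a $B(k,1;1)$-subdivision.

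Otherwise every arc of $T$ has forward $C$-distance at most $k-1$. As exactly one of the two directions between any pair $\{v_a,v_b\}$ has forward distance in $\{1,\ldots,k-1\}$ (the other being in $\{k,\ldots,2k-2\}$), this pins down every arc of $T$: $v_a v_b$ is an arc iff $(b-a) \bmod (2k-1) \in \{1,\ldots,k-1\}$. Hence $T$ coincides, up to relabeling, with $R_{2k-1}$, and the proposition reduces to finding a $B(k,1;1)$-subdivision inside $R_{2k-1}$. This is the main obstacle, since the Hamilton-cycle trick above offers no new information once $T = R_{2k-1}$.

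The plan for the rotative case is an explicit construction with $x = v_1$, $y = v_2$:
\[
P_1 = v_1\, v_3\, v_{k+2}\, v_{k+3}\, \cdots\, v_{2k-1}\, v_2, \qquad P_2 = v_1 v_2, \qquad P_3 = v_2\, v_{k+1}\, v_1.
\]
Each consecutive pair of vertices along $P_1$ and $P_3$ differs by an element of $\{1,\ldots,k-1\}$ modulo $2k-1$ (using $k\ge 3$ for the jumps of size $2$ in $v_1 v_3$ and $v_{2k-1} v_2$), so every step is an arc of $R_{2k-1}$. A direct count gives length $1+1+(k-3)+1 = k$ for $P_1$, and the internal vertex sets $\{v_3, v_{k+2},\ldots,v_{2k-1}\}$, $\emptyset$, and $\{v_{k+1}\}$ are pairwise disjoint, completing the construction.
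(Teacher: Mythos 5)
Your proof is correct and follows essentially the same route as the paper: Camion's theorem, the easy case of a chord of forward distance at least $k$, and then the reduction to $R_{2k-1}$ followed by an explicit bispindle. The only difference is the particular paths chosen inside $R_{2k-1}$ (the paper uses $C[v_1,v_{k-1}]\odot(v_{k-1},v_{k+1},v_{k+2})$, $(v_1,v_k,v_{k+2})$ and $C[v_{k+2},v_1]$), and both constructions implicitly use $k\geq 3$, which is the setting in which the proposition is applied.
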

\begin{proof}
Let $T$ be a strong tournament of order $2k-1$. By Camion's Theorem, it has a hamiltonian directed cycle $C=(v_1,v_2, \dots ,v_{2k-1},v_1)$.
If there exists an arc $v_iv_j$ with $j-i\geq k$  (indices are modulo $2k-1$), then the union of  $C[v_i,v_j]$, $(v_i,v_j)$ and $C[v_j,v_i]$ is a $B(k,1;1)$-subdivision.
Henceforth, we may assume that $T=R_{2k-1}$.
Then the union of $C[v_1, v_{k-1}] \odot (v_{k-1}, v_{k+1}, v_{k+2})$,  $(v_1, v_{k}, v_{k+2})$, and $C[v_{k+2}, v_1]$ is a $B(k,1;1)$-subdivision.
\end{proof}

Let $F$ be a subdigraph of a digraph $D$. A {\em directed
  ear} of $F$ in $D$ is a directed path in $D$ whose ends lie in $F$ but whose internal
vertices do not.   The following lemma is well known.

\begin{lemma}[Proposition 5.11 in \cite{BoMu08}] \label{lem:diear}
Let $F$ be a nontrivial proper $2$-connected strong subdigraph of a
$2$-connected strong digraph $D$. Then $F$ has a directed ear in $D$.
\end{lemma}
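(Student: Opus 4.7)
The plan is to proceed by contradiction. Suppose $F$ has no directed ear in $D$, and set $U = V(D) \setminus V(F)$, which is nonempty since $F$ is a proper subdigraph. For each $u \in U$, the strong connectivity of $D$ yields shortest directed paths from $V(F)$ to $u$ and from $u$ to $V(F)$, whose internal vertices must lie in $U$. Let $L(u)$ (resp.\ $R(u)$) denote the set of $v \in V(F)$ admitting a directed path to $u$ (resp.\ from $u$) with internal vertices in $U$; both are nonempty.

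The no-ear hypothesis collapses $L(u)$ and $R(u)$ to a common singleton $\{a(u)\}$: for any $v \in L(u)$ and $v' \in R(u)$, concatenating the two paths gives a directed walk from $v$ to $v'$ through $U$, and extracting a directed path from this walk yields a directed ear whenever $v \neq v'$. Moreover, $a(\cdot)$ is constant on each weak component of $D[U]$: any arc $uu'$ of $D[U]$, oriented in either direction, has its tail reaching both $u$ and $u'$ through $U$, so the $a$-value of the tail lies in both $L(u) = \{a(u)\}$ and $L(u') = \{a(u')\}$, forcing $a(u) = a(u')$. Consequently the value is constant on undirected paths in $G[U]$, where $G$ denotes the underlying graph of $D$.

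Next, for every arc $vu \in A(D)$ with $v \in V(F)$ and $u \in U$ we have $v \in L(u)$, hence $v = a(u)$; symmetrically, every arc from $U$ back to $V(F)$ terminates at $a(u)$. Fixing any weak component $W$ of $D[U]$ with common value $a = a(W)$, every edge of $G$ between $W$ and $V(F)$ is therefore incident to $a$. Removing $a$ from $G$ disconnects $W$ (nonempty) from $V(F) \setminus \{a\}$ (nonempty because $F$ is $2$-connected, so $|V(F)| \geq 3$), exhibiting $a$ as a cut vertex of $G$ and contradicting the $2$-connectivity of $D$. The main obstacle is pinning down the propagation of $a(\cdot)$ through the weak components of $D[U]$ — the singleton description of $L$ and $R$ is doing the real work — after which the cut-vertex contradiction from $2$-connectivity is immediate.
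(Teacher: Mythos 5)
The paper does not supply its own proof of this lemma---it is cited as Proposition~5.11 of Bondy and Murty---so the comparison is purely between your argument and the textbook's, which I judge on its own merits.

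Your argument is correct and self-contained for the case $V(F)\subsetneq V(D)$. The key step (collapsing $L(u)$ and $R(u)$ to a common singleton $a(u)$, propagating $a(\cdot)$ across weak components of $D[U]$, and then exhibiting $a(W)$ as a cut vertex of the underlying graph) is sound: the extracted $v$--$v'$ path has all internal vertices in $U$ and cannot be a single arc since the walk through $U$ must pass through $u\in U$, so it really is a directed ear whenever $v\neq v'$; and once every $W$--$V(F)$ edge is pinned to the single vertex $a(W)$, 2-connectedness of the underlying graph fails because $V(F)\setminus\{a(W)\}$ is nonempty ($|V(F)|\geq 3$) and $W$ has no other edges leaving it.

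There is, however, one gap at the very start: the assertion that $U=V(D)\setminus V(F)$ is nonempty ``since $F$ is a proper subdigraph'' is false as stated---a proper subdigraph may be spanning, with $V(F)=V(D)$ and $A(F)\subsetneq A(D)$. In that case your argument derives no contradiction (everything is vacuous), yet the lemma still holds: any arc $xy\in A(D)\setminus A(F)$ has both ends in $V(F)$ and no internal vertices, so it is a directed ear of length one. You should dispose of this spanning case separately before assuming $U\neq\emptyset$. With that one sentence added, the proof is complete.
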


We will need the following lemmas:

%
%

\begin{lemma}\label{min}
Let $\sigma=(u_t)_{t\in [p]}$ be a sequence of integers in $[k]$, and let $l$ be a positive integer. If $p\geq l^k$, then there exists a set $L$ of $l$ indices such that for any $i,j \in L$ with $i < j$ the following holds : $u_i=u_j$ and $u_t > u_i$, for all $i < t < j$. \end{lemma}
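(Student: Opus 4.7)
The plan is to proceed by induction on $k$. For the base case $k=1$ the sequence is constant equal to $1$, and $L = \{1, 2, \ldots, l\}$ works trivially since consecutive elements of $L$ have no index strictly between them.

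For the inductive step, let $m$ be the minimum value occurring in $\sigma$, and let $a_1 < a_2 < \cdots < a_s$ be the positions where $u_{a_i} = m$. If $s \geq l$, I would simply take $L = \{a_1, \ldots, a_l\}$: these indices share the value $m$, and between any two of them every intermediate value is strictly greater than $m$ by the minimality of $m$.

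The interesting case is $s \leq l-1$. Removing the positions $a_1, \ldots, a_s$ partitions the remaining indices of $[p]$ into $s+1$ (possibly empty) contiguous gaps whose total length is $p - s$. Each such gap carries only values from $[k] \setminus \{m\}$, a set of size $k-1$. Since $s+1 \leq l$ we have $(s+1)\, l^{k-1} \leq l \cdot l^{k-1} = l^k \leq p$, and a pigeonhole argument then yields a gap $I$ of length at least $l^{k-1}$: otherwise every gap would have length at most $l^{k-1}-1$, giving $p - s \leq (s+1)(l^{k-1}-1) = (s+1)\,l^{k-1} - (s+1) \leq l^k - (s+1)$, i.e.\ $p \leq l^k - 1$, contradicting $p \geq l^k$. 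Applying the induction hypothesis to the subsequence $(u_t)_{t \in I}$, viewed over an alphabet of size $k-1$, produces an $L \subseteq I$ of size $l$ with the required property inside $I$. Since $L$ lies in a single gap, between consecutive elements of $L$ every intermediate index is also in $I$, so the same $L$ works for $\sigma$.

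The only delicate point is that the counting has no slack: the bound $p \geq l^k$ is used right up to equality through $(s+1)\, l^{k-1} \leq l^k$, so the pigeonhole step must be run exactly rather than folded into a constant. Beyond that the induction is straightforward.
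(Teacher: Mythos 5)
Your proof is correct and follows essentially the same route as the paper's: induct on $k$, look at the occurrences of a smallest value (the paper uses the value $1$, you use the minimum occurring value), and when there are fewer than $l$ of them, pigeonhole the at most $l$ gaps to find one of length at least $l^{k-1}$ over an alphabet of size $k-1$; your explicit counting matches the paper's terse $\left\lceil \frac{l^k-(l-1)}{l}\right\rceil = l^{k-1}$. One remark: both your argument and the paper's actually establish the intended reading of the lemma in which $u_t>u_i$ is only required for $t\notin L$ (equivalently, between consecutive elements of $L$), since the literal ``all pairs $i<j\in L$, all $i<t<j$'' reading is unsatisfiable as soon as $l\ge 3$ (an intermediate element of $L$ has $u_t=u_i$), so your base case is fine under the same convention the paper uses.
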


\begin{proof}
By induction on $k$, the result holding trivially when $k=1$. Assume now that $k>1$. Let $L_1$ be the elements of the sequence with value $1$. If $L_1$ has at least $l$ elements, we are done.
If not, then there is a subsequence $\sigma'$ of $\left\lceil \frac{l^k-(l-1)}{l}\right \rceil = l^{k-1}$ consecutive elements in $\{2, \dots , k-1\}$. Applying the induction hypothesis to $\sigma'$ yields the result.
\end{proof}

\begin{lemma}\label{max}
Let $\sigma=(u_t)_{t\in [p]}$ be a sequence of integers in $[k]$. 
If $p > k (m-1)$, then there exists a subsequence of $m$ consecutive integers such that the
last one is the largest.
\end{lemma}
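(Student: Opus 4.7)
The plan is to proceed by induction on $k$, mirroring the structure of the preceding proof of Lemma~\ref{min}.

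For the base case $k = 1$, the hypothesis $p > m - 1$ gives $p \geq m$, and since every term of $\sigma$ equals $1$, the first $m$ entries form a window whose last entry (like every other) is the largest.

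For the inductive step with $k \geq 2$, I would set $k^* = \max_t u_t \leq k$ and let $t^*$ denote the largest index at which $u_{t^*} = k^*$. There are two cases. If $t^* \geq m$, then the window $u_{t^* - m + 1}, u_{t^* - m + 2}, \ldots, u_{t^*}$ already ends with the overall maximum $k^*$ and we are done. Otherwise every occurrence of $k^*$ lies among the first $m - 1$ positions, so the truncated sequence $(u_m, u_{m+1}, \ldots, u_p)$ takes values in $[k^* - 1] \subseteq [k-1]$ and has length $p - (m - 1) > k(m-1) - (m-1) = (k-1)(m-1)$. Applying the induction hypothesis (with $k$ replaced by $k-1$) to this truncation yields $m$ consecutive entries whose last entry is the largest among them; these same $m$ entries then witness the conclusion for the original sequence.

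I do not anticipate any genuine obstacle; the argument is a short pigeonhole-style induction. The only delicate point is to check that trimming the first $m-1$ positions both eliminates the value $k^*$ (by the choice of $t^*$ as the \emph{last} occurrence) and still leaves a sequence long enough for the inductive hypothesis to apply. The bound $p > k(m-1)$ is tailored precisely for this: a single trimming step of length $m-1$ converts it into the hypothesis $p' > (k-1)(m-1)$ needed at the next level of the recursion. As an alternative one could argue directly by building a descending chain of indices $t_0 = p, t_{i+1} \in [t_i - m + 1, t_i - 1]$ with $u_{t_{i+1}} > u_{t_i}$; such a chain has at most $k$ terms because values lie in $[k]$, yet the bound $p > k(m-1)$ forces it to survive $k$ steps above $m$, a contradiction.
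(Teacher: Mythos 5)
Your proof is correct and follows essentially the same route as the paper's: induction on $k$, taking the length-$m$ window ending at a late occurrence of the maximum value when possible, and otherwise trimming a prefix so that the remaining sequence has length greater than $(k-1)(m-1)$, takes values in $[k-1]$, and can be handled by the induction hypothesis. The only cosmetic difference is that you trim exactly the first $m-1$ positions and work with the actual maximum $k^*$, whereas the paper trims up to the last occurrence of the value $k$ itself.
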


\begin{proof}
By induction on $k$, the result holding trivially when $k=1$. 
Let $i$ be the smallest integer such that $u_t\leq k-1$ for all $t\geq i$.
If $i>m$, then $u_{i-1}=k$, and the subsequence of the $i-1$ first elements of $\sigma$ is the desired sequence.
If $i\leq m$, apply the induction on $\sigma'=(u_t)_{i\leq t\leq p}$ which is a sequence of more than $(k-1)(m-1)$ integers in $[k-1]$, to get the result. 
\end{proof}

\section{$3$-spindles and $(2+2)$-bispindles}



\begin{theorem}
For every integer $k$, there exists a strongly connected digraph $D$ with $\chi(D) >k$ that contains no $3$-spindle and no $(2+2)$-bispindle. 
\end{theorem}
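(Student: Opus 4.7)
The plan is to take the acyclic construction of Theorem~\ref{dkb} and augment it with a single ``return chain'' through which any two internally disjoint dipaths are forced to share a vertex. Let $H$ be the acyclic digraph $D_{k,3}$ provided by Theorem~\ref{dkb}, so $\chi(H)>k$ and every cycle in $H$ has at least three blocks. A $2$-spindle between two vertices $x,y$ of $H$ would produce a cycle with exactly two blocks (source-apex $x$, sink-apex $y$), so $H$ contains no $2$-spindle; a fortiori, $H$ contains no $3$-spindle and no $(2+2)$-bispindle.

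I will convert $H$ into a strongly connected digraph $D$ as follows. Let $S=\{s_1,\dots,s_m\}$ and $T=\{t_1,\dots,t_r\}$ be the sources and sinks of $H$. Introduce fresh vertices $v_1,\dots,v_{r+m}$ arranged on a directed path by chain arcs $v_i\to v_{i+1}$, together with entry arcs $t_j\to v_j$ for $j\in[r]$ and exit arcs $v_{r+i}\to s_i$ for $i\in[m]$. Because every vertex of $H$ reaches some sink of $H$ (and is reached from some source), the resulting digraph $D$ is strongly connected: from any vertex one descends to a sink in $H$, enters the chain, traverses it to any chosen $v_{r+i}$, exits to $s_i$, and then reaches any descendant of $s_i$ in $H$. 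Also $\chi(D)\ge\chi(H)>k$ since $H$ is a subdigraph.

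The decisive observation is the following overlap property: any dipath of $D$ that uses at least one chain arc traverses a contiguous segment $v_\alpha\to\cdots\to v_\beta$ of the chain; since the chain can be entered from outside only via an entry arc $t_j\to v_j$ (forcing $\alpha\le r$) and exited only via an exit arc $v_{r+i}\to s_i$ (forcing $\beta\ge r+1$), any two such segments corresponding to dipaths with endpoints in $H$ overlap, because $\max(\alpha,\alpha')\le r<r+1\le\min(\beta,\beta')$. Hence two internally disjoint dipaths using the chain must share a chain vertex internally. Applied to a hypothetical $3$-spindle $P_1,P_2,P_3$ in $D$, this shows that at most one $P_i$ uses the chain; the remaining at least two lie entirely in $H$ and form a $2$-spindle in $H$, contradicting the choice of $H$. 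Applied to a hypothetical $(2+2)$-bispindle $P_1,P_2,Q_1,Q_2$ in $D$, at least three of the four dipaths lie in $H$ and hence include at least one $(x,y)$-dipath and one $(y,x)$-dipath of $H$, whose concatenation is a directed cycle in the acyclic digraph $H$, a contradiction.

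The main obstacle is the boundary-case analysis when an endpoint $x$ or $y$ of the considered dipaths lies on the chain itself, so that the forced shared vertex could a priori be an endpoint rather than an internal vertex. These cases are handled by noting that each internal chain vertex $v_i$ has a unique chain out-neighbour $v_{i+1}$ (for $i<r+m$), hence two distinct dipaths sharing such a $v_i$ as an endpoint are forced to also share its chain successor as an internal vertex; the spindle analysis then proceeds as above with only routine modifications.
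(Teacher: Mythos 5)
Your construction and argument are essentially the paper's: the paper likewise augments an acyclic digraph from Theorem~\ref{dkb} with a single directed chain running from the sinks to the sources, and uses the unique ``crossing point'' of that chain (an explicit middle vertex $z$ in the paper, the arc $v_r\to v_{r+1}$ in yours) to force any two internally disjoint dipaths that both cross the chain to collide internally, so that at most one path of a spindle or bispindle can use it. The remaining differences are cosmetic: the paper takes $D_{k,4}$ where your $D_{k,3}$ suffices, its $(2+2)$-bispindle step is phrased via two arc-disjoint directed cycles both needing the cut arc rather than your pigeonhole on three chain-free paths in the acyclic $H$, and the boundary case you flag never actually arises, since no chain vertex has both in-degree and out-degree large enough to be an apex of a $3$-spindle or a $(2+2)$-bispindle (each $v_i$ with $i\le r$ has out-degree $1$, each $v_{r+i}$ with $i\ge 1$ has in-degree $1$).
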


\begin{proof}
Let $D_{k,4}$ an acyclic digraph with chromatic number greater than $k$ and with every cycle having at least four blocks obtained by Theorem \ref{dkb}.
Let $S = \{s_1, \dots,  s_l\}$ be the set of vertices of $D_{k,4}$ with out-degree 0 and $T = \{t_1, \dots, t_m\}$ the set of vertices with in-degree 0.

Consider the digraph $D$ obtained from $D_{k,4}$ as follows. Add a dipath  $P = (x_1,x_2, \dots , x_l,z,y_1,y_2,\dots, y_m)$ and the arc $s_ix_i$ for all $i\in [l]$ and
$y_jt_j$ for all $j\in [m]$. It is easy to see that $D$ is strong.
Moreover, in $D$, every directed cycle uses the arc $x_lz$. Therefore $D$ does not contain a $(2+2)$-bispindle, which has two arc-disjoint directed cycles. 

Suppose now that $D$ has a $3$-spindle with tail $u$ and head $v$, and let $Q_1, Q_2, Q_3$ be its three $(u,v)$-dipaths. Observe that $u$ and $v$ are not vertices of $P$, because all vertices of this dipath have either in-degree $2$ or out-degree $2$. In $D$ each cycle with two blocks between vertices outside $P$ must use the arc $x_lz$. The union of $Q_1$ and $Q_2$ form a cycle on two blocks, 
which means one of the two paths, say $Q_1$, contains $x_lz$. But $Q_2$ and $Q_3$ also form a cycle on two blocks, but they cannot contain $x_lz$, a contradiction.
\end{proof}

\section{$B(k,1;1)$}\label{sec:k,1,1}

\begin{proposition}\label{prop:p211}
Let $D$ be a strong digraph.
If $\chi(D)\geq 4$, then $D$ contains a $B(2,1;1)$-subdivision.
\end{proposition}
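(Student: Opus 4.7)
The plan is to start with a longest directed cycle produced by Bondy's Theorem~\ref{thm:bondy} and study how the rest of $D$ attaches to it. Since $\chi(D) \geq 4$, that theorem gives a directed cycle of length at least $4$; let $C = v_0 v_1 \cdots v_{k-1} v_0$ (indices modulo~$k$) be a longest such cycle, with $k \geq 4$. The proof then splits on whether $C$ is chordless.

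If $C$ has a chord, that is, an arc $v_iv_j$ with $j \not\equiv i+1 \pmod k$, the subdivision is immediate: with $x = v_i$ and $y = v_j$, the three internally disjoint dipaths $v_iv_j$ (length~$1$), $C[v_i,v_j]$ (length $\geq 2$, since $v_iv_j$ is not an arc of $C$), and $C[v_j,v_i]$ (length $\geq 1$) form a $B(2,1;1)$-subdivision.

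Assume henceforth that $C$ is chordless. Because $\chi(C) \leq 3 < \chi(D)$, the set $Y := V(D) \setminus V(C)$ is nonempty. For any $w \in Y$, strong connectivity supplies a shortest dipath $P_1$ from $V(C)$ to $w$ (starting at some $v_i$) and a shortest dipath $P_2$ from $w$ to $V(C)$ (ending at some $v_j$), and shortness forces both to have all internal vertices in $Y$. Extracting a dipath from the walk $P_1 P_2$ yields a dipath $P$ from $v_i$ to $v_j$ of length at least $2$ whose internal vertices lie in $Y$. As soon as some choice of $w$ produces $i \neq j$, the triple $P$, $C[v_i,v_j]$, $C[v_j,v_i]$ constitutes a $B(2,1;1)$-subdivision.

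The crux is the remaining case, where this construction always forces $i = j$. I would then show that each $w \in Y$ has a unique attachment vertex $\phi(w) \in V(C)$: if two $Y$-internal dipaths joining $w$ to $V(C)$ used different $V(C)$-endpoints, their concatenation would produce a forbidden $(v_i, v_j)$-dipath through $Y$ with $i \neq j$. The classes $Y_i := \phi^{-1}(v_i)$ then partition $Y$, and the same rigidity rules out all arcs between different $Y_i$'s and between $Y_i$ and any $v_j$ with $j \neq i$; each $D[Y_i \cup \{v_i\}]$ is readily seen to be strong. Since these ``blobs'' meet $C$ only at single cut vertices, $\chi(D) = \max\!\bigl(\chi(C),\, \max_i \chi(D[Y_i \cup \{v_i\}])\bigr)$, forcing some blob to have chromatic number at least $4$. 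An induction on $|V(D)|$ (equivalently, a minimum counterexample argument) applied to this strictly smaller strong blob yields a $B(2,1;1)$-subdivision inside it, and hence inside $D$. The main obstacle is establishing the rigid blob decomposition in this last case; once it is in place, the induction closes the argument cleanly.
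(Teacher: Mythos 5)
Your argument is correct, but it follows a genuinely different route from the paper. The paper's proof is very short: it first passes to a $2$-connected strong subdigraph with the same chromatic number, takes a \emph{shortest} directed cycle $C$ (which is therefore induced and has $\chi(C)\leq 3$, so $C$ is a proper subdigraph), and then invokes the directed-ear lemma (Lemma~\ref{lem:diear}) to get a directed ear of length at least $2$; the ear together with $C$ is already the $B(2,1;1)$-subdivision. You instead start from a \emph{longest} cycle given by Bondy's theorem, dispose of the chord case directly, and in the chordless case analyse how the outside attaches to $C$: either some vertex outside attaches at two distinct cycle vertices (done), or every outside vertex has a unique attachment vertex, which yields a decomposition of $D$ into $C$ plus strong ``blobs'' glued to $C$ at single cut vertices, so $\chi(D)=\max\bigl(\chi(C),\max_i\chi(D[Y_i\cup\{v_i\}])\bigr)$ and induction on $|V(D)|$ closes the argument. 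What the paper's approach buys is brevity, at the price of citing the ear lemma and the reduction to a $2$-connected strong piece of the same chromatic number; what yours buys is a self-contained, elementary argument whose ``unique attachment'' rigidity is very much in the spirit of Lemma~\ref{lem:headphone} and Lemma~\ref{lem:compo1} used later for Theorem~\ref{th:P11k}. Two points worth tightening when writing it up: the uniqueness of $\phi(w)$ should be argued by pairing an in-attachment dipath with an out-attachment dipath (two dipaths both \emph{into} $w$ cannot literally be concatenated), and the strongness of each $D[Y_i\cup\{v_i\}]$ deserves the one-line verification that every internal vertex of an attachment dipath of $u\in Y_i$ also has attachment vertex $v_i$; both are immediate from the rigidity you set up, so neither is a gap.
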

\begin{proof}
Assume  $\chi(D)\geq 4$. Since every digraph contains a $2$-connected strong subdigraph with the same chromatic number, we may assume that $D$ is $2$-connected.
Let $C$ be a shortest directed cycle in $D$. 
It must be induced, so $\chi(D[C])=\chi(C) \leq 3$. 
Now by Lemma~\ref{lem:diear}, $C$ has a directed ear $P$ in $D$. Necessarily, $P$ has length at least $2$ since $C$ is induced.
Thus the union of $P$ and $C$ is a $B(2,1;1)$-subdivision.
\end{proof}

The bound $4$ in Proposition~\ref{prop:p211} is best possible because a directed odd cycle has chromatic number $3$ and contains no $B(2,1;1)$-subdivision.

\medskip

In the remaining of this section, we present a proof of Theorem~\ref{th:P11k}.

Let  ${\cal C}$ be a collection of directed cycles. It is {\it nice} if all cycles of ${\cal C}$ have length at least $2k-2$, and
any two distinct cycles $C_i,C_j\in\mathcal C$ intersect on at most one vertex. 
A {\it component} of $\mathcal{C}$ is a connected component in the adjacency graph of $\mathcal{C}$, where vertices correspond to cycles in $\mathcal{C}$
and two vertices are adjacent if the corresponding cycles intersect. Note that if ${\cal S}$ is a component of ${\cal C}$, then $\bigcup{\cal S}$ is 
both a connected component and a strong component of $\bigcup{\cal C}$.
Call $D_{\mathcal{C}}$ the digraph obtained from $D$ by contracting each component of $\mathcal{C}$ into one vertex. 
For sake of simplicity, we denote by $D[{\cal S}]$ the digraph $D[\bigcup {\cal S}]$. Observe that this digraph contains $\bigcup {\cal S}$ but has more arcs.

We will prove that every $B(k,1;1)$-subdivision-free strong digraph $D$ has bounded chromatic number in the following way:
Take ${\cal C}$ a maximal nice collection of cycles. 
We will prove that every component $S$ of ${\cal C}$ induces a digraph $D[{\cal S}]$ on $D$ of bounded chromatic number. 
Then we will prove that, since it contains no long directed cycle and it is strong, $D_{\mathcal{C}}$ has bounded chromatic number,
which, by Lemma~\ref{lem:contrac}, allows us to conclude. 

We will need the following lemma:

\begin{lemma}\label{lem:headphone}
Let ${\cal C}$ be a nice collection of cycle in a $B(k,1;1)$-subdivision-free digraph $D$ and let $C$, $C'$ be two cycles of the same
component ${\cal S}$ of $\mathcal{C}$. There is no dipath $P$ from $C$ to $C'$ whose arcs are not in $A (\bigcup  {\cal S})$. 
\end{lemma}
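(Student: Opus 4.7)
The plan is a proof by contradiction: I would assume such a dipath $P$ exists and manufacture a $B(k,1;1)$-subdivision of $D$. First, I would take $P$ to be of minimum length among all dipaths in $D$ whose arcs avoid $A(\bigcup \mathcal{S})$ and whose two endpoints lie on cycles of $\mathcal{S}$. The minimality instantly forces every internal vertex of $P$ to lie outside $\bigcup \mathcal{S}$: if some internal vertex $w$ of $P$ were on a cycle $C'' \in \mathcal{S}$, then $P[w, t(P)]$ would be a strictly shorter dipath of the same type. Write $x = s(P)$, $y = t(P)$, and fix cycles $C_x, C_y \in \mathcal{S}$ containing $x, y$ respectively.

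Because $\mathcal{S}$ is one connected component of the adjacency graph of $\mathcal{C}$, I would pick a shortest sequence $C_x = C_{(0)}, C_{(1)}, \ldots, C_{(t)} = C_y$ of cycles in $\mathcal{S}$ with $V(C_{(i)}) \cap V(C_{(i+1)}) = \{z_i\}$. Minimality of the chain together with niceness yields $V(C_{(i)}) \cap V(C_{(j)}) = \emptyset$ whenever $|i - j| \geq 2$ (otherwise the chain could be shortened). Adjusting the choice of $C_x$ and $C_y$ if necessary, I would ensure that $x \neq z_0$, $y \neq z_{t-1}$, and $t \geq 1$.

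For each index $i \in \{0, 1, \ldots, t\}$ I would construct a candidate $(2+1)$-bispindle $B_i$ with its two ends on $C_{(i)}$: set $(u_0, v_0) = (x, z_0)$, $(u_i, v_i) = (z_{i-1}, z_i)$ for $1 \leq i \leq t-1$, and $(u_t, v_t) = (z_{t-1}, y)$. The three pairwise internally disjoint $(u_i, v_i)$-paths of $B_i$ are the two sub-dipaths $C_{(i)}[u_i, v_i]$ and $C_{(i)}[v_i, u_i]$ of $C_{(i)}$, together with a composite forward $(u_i, v_i)$-dipath which leaves $C_{(i)}$ at $u_i$, winds through the remaining cycles of the chain in the only possible way (using $P$ exactly once to bridge $C_x$ and $C_y$), and re-enters $C_{(i)}$ at $v_i$. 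Internal disjointness follows from the vertex-disjointness of non-consecutive cycles in the chain, from $C_{(i)}$ meeting its two neighbours only at $\{z_{i-1}, z_i\}$, and from the interior of $P$ lying outside $\bigcup \mathcal{S}$.

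If some $B_i$ contains a forward dipath of length $\geq k$, it is already a $B(k, 1; 1)$-subdivision and we contradict the hypothesis on $D$. Otherwise every $B_i$ fails, so $|C_{(i)}[u_i, v_i]| \leq k - 1$ for every $i$, and the niceness bound $|C_{(i)}| \geq 2k - 2$ forces $|C_{(i)}[v_i, u_i]| \geq k - 1$. Substituting these lower bounds into the length of the composite forward dipath of $B_0$ and using $|P| \geq 1$, the total comes out at least $1 + t(k - 1) \geq k$ whenever $t \geq 1$, which contradicts the failure of $B_0$. Hence some $B_i$ is a genuine $B(k, 1; 1)$-subdivision, the desired contradiction. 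The main obstacle in the plan is the verification that the composite forward dipath of each $B_i$ is truly internally disjoint from the two $C_{(i)}$-arcs; this is precisely what the shortest-chain choice and the minimality of $P$ are designed to secure.
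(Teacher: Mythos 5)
Your chain construction and the counting step are fine as far as they go: for $t\geq 1$, with $x$ and $y$ off the intermediate cycles, the failure bounds $|C_{(i)}[u_i,v_i]|\leq k-1$ do force the composite forward path of $B_0$ to have length at least $1+t(k-1)\geq k$, and this is a genuinely different route from the paper's (which never re-chooses $P$, but takes a shortest return dipath $Q$ from $C'$ to $C$ inside $\bigcup\mathcal{S}$ and splits into two cases according to whether $C[t(Q),s(P)]$ has length at least $k-1$ or $C[s(P),t(Q)]$ has length at least $k$). The gap is the sentence ``Adjusting the choice of $C_x$ and $C_y$ if necessary, I would ensure that $x\neq z_0$, $y\neq z_{t-1}$, and $t\geq 1$.'' This cannot always be arranged, and when it cannot, your argument yields no contradiction. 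Concretely: let $C$ be a directed cycle of length $2k-2$ with a chord from $v_1$ to $v_k$ so that $|C[v_1,v_k]|=|C[v_k,v_1]|=k-1$, and let $C'$ be a second directed cycle of length $2k-2$ meeting $C$ exactly in $v_k$. Then $\{C,C'\}$ is a nice collection forming one component $\mathcal{S}$, the chord is a dipath from $C$ to $C'$ whose arc is not in $A(\bigcup\mathcal{S})$, the digraph contains no $B(k,1;1)$-subdivision, and every admissible choice of $C_x\ni v_1$, $C_y\ni v_k$ violates one of your three conditions (either $C_x=C_y=C$ and $t=0$, or $C_y=C'$, $t=1$ and $y=z_0$). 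So the degenerate case is not a technicality to be adjusted away: in it the desired bispindle may simply not exist, and your global minimisation of $P$ over all dipaths with both ends on $\bigcup\mathcal{S}$ makes matters worse, since the minimal $P$ can be a chord of a single cycle (where $t=0$ is forced) even when the originally given path between distinct cycles is unproblematic.

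What this shows is that the lemma is really being used (and is only safe) under the additional property, which holds in both of its applications by niceness of $\mathcal{C}$, that no cycle of $\mathcal{S}$ contains both ends of $P$; any proof must exploit this. The paper does so implicitly through its choice of the return path $Q$ with $s(Q)\neq t(P)$ and $t(Q)\neq s(P)$ before assembling the bispindle, whereas your write-up never invokes such a property, and your re-minimisation of $P$ can even destroy it when it initially holds. To repair your approach you would need, at minimum, to keep the hypothesis that the ends of $P$ lie on no common cycle of $\mathcal{S}$, minimise only within that class (checking it is preserved), and then prove, rather than assert, that $t\geq 1$, $x\neq z_0$, $y\neq z_{t-1}$ and that $x,y$ avoid all cycles of the chain other than $C_{(0)}$ and $C_{(t)}$ respectively; as it stands, these claims are exactly where the difficulty of the lemma is concentrated.
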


\begin{proof}
By the contrapositive. We suppose that there exists such a dipath $P$ and show that there is a subdivision of $B(k,1;1)$ in $D$.

By definition of ${\cal S}$, there exists a dipath $Q$ from $C$ to $C'$ in $\bigcup S$. By choosing $C$ and $C'$ such that 
$Q$ is as small as possible, then $s(Q) \not = t(P)$ and $t(Q) \not = s(P)$ (note that $s(Q)$ and $t(Q)$ can be the same vertex). 

Since $C$ has length at least $2k-2$, either $C[t(Q), s(P)]$ has length at least $k-1$ or $C[s(P), t(Q)]$ has length at least $k$. 

\begin{itemize}
	\item If $C[t(Q), s(P)]$ has length at least $k-1$, then the union of $Q \odot C[t(Q), s(P)] \odot P$, $C'[s(Q), t(P)]$ and $C'[ t(P), s(Q)]$
	is a subdivision of $B(k,1;1)$ between $s(Q)$ and $t(P)$. 
	\item If $C[s(P), t(Q)]$ has length at least $k$, then the union of $C[s(P), t(Q)]$, $P \odot C'[t(P), s(Q)] \odot Q$ and $C[t(Q), s(P)]$
	is a subdivision of $B(k,1;1)$ between $s(P)$ and $t(Q)$. 
\end{itemize}
\end{proof}

\begin{lemma}\label{lem:compo1}
Let $k\geq 3$ be an integer, and let ${\cal C}$ be a nice collection of cycles in a $B(k,1;1)$-subdivision-free digraph $D$ and ${\cal S}$ a component of $\mathcal{C}$. Then $\chi(D[S]) \leq 2k-2$.	 
\end{lemma}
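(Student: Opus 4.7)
The plan is to establish $\chi(D[\mathcal{S}]) \leq 2k-2$ via a structural analysis that characterizes the arcs of $D[\mathcal{S}]$, bounds $\chi(D[C])$ for each cycle $C \in \mathcal{S}$ via Brooks' theorem, and then glues the per-cycle colorings at shared vertices.

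First, using Lemma~\ref{lem:headphone} together with the fact that distinct cycles of $\mathcal{C}$ share at most one vertex, every arc $uv \in A(D[\mathcal{S}])$ must have both endpoints in a common cycle of $\mathcal{S}$: if $u$ and $v$ lay in distinct cycles $C_1, C_2 \in \mathcal{S}$ with no common cycle containing both, then $uv$ would itself be a dipath from $C_1$ to $C_2$ whose arcs are not in $A(\bigcup \mathcal{S})$, contradicting Lemma~\ref{lem:headphone}. Second, for any chord $uv$ of a cycle $C \in \mathcal{S}$ in $D[\mathcal{S}]$, I would argue that $|C[u,v]| \leq k-1$: otherwise $C[u,v]$, the arc $uv$, and $C[v,u]$ (of length $\geq (2k-2) - (k-1) = k-1 \geq 1$) form a $B(k,1;1)$-subdivision. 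Consequently, in $D[C]$ each vertex has at most two cycle-neighbors and at most $2(k-2)$ chord-neighbors (short-forward chords at cyclic distances $2,\dots,k-1$ in each direction), so $\Delta(D[C]) \leq 2k-2$.

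Brooks' theorem (Theorem~\ref{thm:brooks}) then yields $\chi(D[C]) \leq 2k-2$ for each $C \in \mathcal{S}$: odd cycles have $\chi = 3 \leq 2k-2$ for $k \geq 3$; the chord-length restriction forces that if $D[C]$ is complete on $n$ vertices then $n \leq 2k-1$; for $n = 2k-2$ we directly have $\chi(K_n) = 2k-2$, while for $n = 2k-1$ the digraph $D[C]$ would be a strong tournament on $2k-1$ vertices, which by Proposition~\ref{prop:tournoi} contains a $B(k,1;1)$-subdivision, contradicting the hypothesis.

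Since every arc of $D[\mathcal{S}]$ lies within a single cycle, the final task is to combine the per-cycle colorings so that they agree at shared vertices. As a structural input I would first prove that any three cycles of $\mathcal{S}$ pairwise sharing three distinct vertices $a,b,c$ yield a $B(k,1;1)$-subdivision: concatenating sub-paths gives two internally disjoint $(a,c)$-dipaths (one along the first cycle, the other through the second and third), plus a return $(c,a)$-dipath along the first cycle, and examining both orientations of the resulting $(2{+}1)$-bispindle together with the bounds $|C_i| \geq 2k-2$ forces a length contradiction. This Helly-type property lets one extend a coloring from one cycle to a neighbouring one by permuting color classes so that the unique shared vertex receives the already-fixed color. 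The main obstacle is precisely this gluing step: the cycle intersection graph need not be a tree (for instance four cycles forming a cyclic intersection pattern on four distinct shared vertices is not obviously ruled out), so a single cycle may be required to honor several pre-colored vertices simultaneously; handling this will require either a sharper structural restriction on $\mathcal{S}$ or careful exploitation of the slack available in proper $(2k-2)$-colorings of each $D[C]$.
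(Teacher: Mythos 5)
Your first two steps (every arc of $D[\mathcal{S}]$ lies inside a single cycle of $\mathcal{S}$, and each $D[C]$ has maximum degree at most $2k-2$, hence by Brooks' theorem together with Proposition~\ref{prop:tournoi} chromatic number at most $2k-2$) are sound and coincide with the paper's proof. But the gluing step, which you yourself flag as unresolved, is the heart of the argument, and your proposal does not close it: the ``Helly-type'' statement about three cycles sharing three vertices is only sketched (no explicit $B(k,1;1)$-subdivision or length contradiction is exhibited), and even if proved it would not by itself exclude the more complicated intersection patterns you worry about, nor tell you how to make the per-cycle colourings consistent when one cycle must honour several pre-coloured vertices. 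As written, the proof is therefore incomplete.

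The missing idea is to argue by induction on the number of cycles and to apply Lemma~\ref{lem:headphone} not only to single arcs but to subpaths of a removed cycle. Fix a cycle $C\in\mathcal{S}$ and let $\mathcal{S}_1,\dots,\mathcal{S}_r$ be the components of $\mathcal{S}\setminus\{C\}$. If some $\bigcup\mathcal{S}_l$ met $C$ in two vertices $x$ and $y$, then $x$ and $y$ would lie on two distinct cycles $C_i,C_j$ of $\mathcal{S}_l$ (niceness forbids a cycle of $\mathcal{S}_l$ from sharing two vertices with $C$), and $C[x,y]$ would be a dipath from $C_i$ to $C_j$ whose arcs are not in $A(\bigcup\mathcal{S}_l)$ (again by niceness, no arc of $C$ belongs to a cycle of $\mathcal{S}_l$), contradicting Lemma~\ref{lem:headphone}. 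Hence each $\mathcal{S}_l$ attaches to $C$ at exactly one vertex. By induction each $D[\mathcal{S}_l]$ has a proper $(2k-2)$-colouring; permuting its colours so that the unique attachment vertex keeps the colour it has on $C$, and observing that Lemma~\ref{lem:headphone} also excludes arcs between distinct $D[\mathcal{S}_l]$'s and between $D[\mathcal{S}_l]$ and $C$ away from that vertex, the union of these colourings with the colouring of $D[C]$ is proper. This single-attachment-vertex claim is exactly the ``sharper structural restriction'' you said you needed; without it (or an equivalent substitute) the colour-permutation gluing cannot be carried out, so your argument has a genuine gap precisely where you suspected.
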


\begin{proof}
By induction on the number of cycles in ${\cal S}$. Let $C$ be a cycle of ${\cal S}$. There is no chord between $x$ and $y$ in $C$
such that $C[x,y]$ has length at least $k$, for otherwise there would be a $B(k,1;1)$-subdivision. Hence $D[C]$ has maximum degree at most $2k-2$. Moreover, by Proposition~\ref{prop:tournoi}, $D[C]$ is not a tournament of order $2k-1$.
Thus, by Brooks' Theorem (\ref{thm:brooks}), $\chi(D[C])\leq 2k-2$.
 Let $c$ be a proper colouring of $C$ with $2k-2$ colours. Let ${\cal S}_1, {\cal S}_2, \dots, {\cal S}_r$ be the components of ${\cal S} \setminus{C}$. Since ${\cal S}$ is the union the ${\cal S}_l$, $l\in [r]$, and $\{C\}$,
each ${\cal S}_l$ has less cycles than ${\cal S}$. By the induction hypothesis, there exists a proper colouring $c_l$ using $2k-2$ colours for each $D[{\cal S}_i]$.
 
Now, we claim that each $D[{\cal S}_l]$ intersects $C$ in exactly one vertex. It is easy to see that $C$ must intersect at least one cycle of each ${\cal S}_l$.
Now suppose there exist two vertices of $C$, $x$ and $y$ in $D[{\cal S}_l]$. By definition of a nice collection, they cannot belong to the same cycle 
of ${\cal S}_l$, so there exist two cycles $C_i$ and $C_j$ of $S_l$ such that $x \in C_i$ and $y \in C_j$. Now $C[x,y]$ is a dipath form $C_i$ to $C_j$ whose arcs are not in $A(\bigcup {\cal S}_l)$. This contradicts Lemma~\ref{lem:headphone}. 

Consequently, free to permute the colours of the $c_l$,
we may assume that each vertex of $C$ receives the same colour in $c$ and in the $c_l$. 
In addition, by Lemma~\ref{lem:headphone}, there is no arc between different $D[{\cal S}_l]$ nor between $D[{\cal S}_l]$ and $C$. Hence the union of the 
$c_l$ and $c$ is a proper colouring of $D[{\cal S}]$ using $2k-2$ colours. 
\end{proof}

\begin{lemma}\label{lem:reduce}
Let ${\cal C}$ be a maximal nice collection of cycle in a $B(k,1;1)$-subdivision-free strong digraph $D$. Then $\chi(D_{\mathcal{C}}) \leq 2k-3$
\end{lemma}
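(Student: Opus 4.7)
The plan is to reduce the problem to Bondy's theorem applied to $D_{\mathcal{C}}$. Since each component $\bigcup\mathcal{S}$ of $\mathcal{C}$ is strongly connected (a connected union of directed cycles is strong), $D_{\mathcal{C}}$ inherits strong connectivity from $D$, and by Theorem~\ref{thm:bondy} it suffices to show that $D_{\mathcal{C}}$ contains no directed cycle of length at least $2k-2$. I proceed by contradiction: assume $C^{*}=u_{1}u_{2}\cdots u_{m}u_{1}$ is such a cycle in $D_{\mathcal{C}}$ with $m\ge 2k-2$, and I will derive either a cycle of $D$ that can be added to $\mathcal{C}$ (contradicting maximality) or a $B(k,1;1)$-subdivision of $D$ (contradicting the hypothesis).

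For each contracted vertex $u_{i}=s_{\mathcal{S}_{u_i}}$ of $C^{*}$, the two arcs of $C^{*}$ incident to $u_{i}$ correspond, in $D$, to an arc entering some $\beta_{i}\in\bigcup\mathcal{S}_{u_i}$ and an arc leaving some $\alpha_{i}\in\bigcup\mathcal{S}_{u_i}$. The key geometric step will be to prove that $\alpha_{i}$ and $\beta_{i}$ always lie on a common cycle $C_{i}\in\mathcal{S}_{u_i}$. Indeed, following $C^{*}$ from $\alpha_{i}$ around to $\beta_{i}$, and lifting every intermediate contracted vertex $u_{j}$ ($j\ne i$) through an arbitrary dipath inside $\bigcup\mathcal{S}_{u_j}$, produces a directed walk in $D$ from $\alpha_{i}$ to $\beta_{i}$ whose arcs all avoid $A(\bigcup\mathcal{S}_{u_i})$; extracting a dipath from this walk and applying Lemma~\ref{lem:headphone} then forces the cycles of $\mathcal{S}_{u_i}$ containing $\alpha_{i}$ and $\beta_{i}$ to coincide. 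I expect this to be the delicate step, since one must carefully verify that no detour arc introduced when lifting the other $u_{j}$'s ends up in $A(\bigcup\mathcal{S}_{u_i})$, which crucially relies on distinct components being vertex-disjoint.

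Setting $R_{i}:=C_{i}[\beta_{i},\alpha_{i}]$ and concatenating these with the $D$-arcs of $C^{*}$ yields a lift $\tilde{C}$ which, since distinct components are vertex-disjoint and each $R_{i}$ is simple, is a simple directed cycle of $D$ of length at least $m\ge 2k-2$. The proof concludes by a dichotomy. If $\alpha_{i}=\beta_{i}$ for every contracted $u_{i}$ on $C^{*}$, then $\tilde{C}$ meets each component in a single vertex, hence every cycle of $\mathcal{C}$ in at most one vertex; since $\tilde{C}$ is not contained in any single component, $\tilde{C}\notin\mathcal{C}$, and $\mathcal{C}\cup\{\tilde{C}\}$ is then a strictly larger nice collection, contradicting the maximality of $\mathcal{C}$. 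Otherwise, pick $j$ with $\alpha_{j}\ne\beta_{j}$: the three internally disjoint dipaths $R_{j}=C_{j}[\beta_{j},\alpha_{j}]$, $C_{j}[\alpha_{j},\beta_{j}]$, and $\tilde{C}$ minus the interior of $R_{j}$ (which meets $\bigcup\mathcal{S}_{u_j}$ only along $R_{j}$) together form a subdivision of $B(k,1;1)$ between $\alpha_{j}$ and $\beta_{j}$, since the first two have length at least $1$ while the ``long way around'' $\tilde{C}$ has length at least $m\ge 2k-2\ge k$. This contradicts $D$ being $B(k,1;1)$-subdivision-free, finishing the argument.
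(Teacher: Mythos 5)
Your proof is correct, and it follows the same skeleton as the paper's argument: pass to the strong digraph $D_{\mathcal{C}}$, apply Bondy's theorem (Theorem~\ref{thm:bondy}) to get a directed cycle of length at least $2k-2$, lift it back to a cycle of $D$, and contradict either the maximality of $\mathcal{C}$ or $B(k,1;1)$-subdivision-freeness. The difference lies in how the lift is controlled inside each contracted component. The paper routes through an arbitrary dipath $P_i$ of $D[\mathcal{S}_i]$ from the entry vertex to the exit vertex, and then argues that the lifted cycle meets each cycle $C_1$ of $\mathcal{C}$ along a subpath of some $P_i$, extracting a subdivision from that intersection together with the two arcs of $C_1$ when the intersection has more than one vertex. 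You instead invoke Lemma~\ref{lem:headphone} (which the paper uses only in the proof of Lemma~\ref{lem:compo1}) to force the entry vertex $\beta_i$ and exit vertex $\alpha_i$ onto a common cycle $C_i$ of the component, route along $C_i[\beta_i,\alpha_i]$, and then split into the dichotomy: all lifts trivial (maximality contradiction) versus some nontrivial lift $R_j$, in which case the long $(\alpha_j,\beta_j)$-path is the whole way around $\tilde{C}$ avoiding $R_j$, which contains all $m\geq 2k-2\geq k$ lifted arcs of $C^*$. What your variant buys is a cleaner verification of internal disjointness (the lifted cycle meets $\bigcup\mathcal{S}_{u_j}$ exactly along $R_j$) and a more robust length count, since the paper's subdivision built from $C'[x,y]$, $C_1[x,y]$, $C_1[y,x]$ relies on one of the two arcs of $C_1$ being long, which the hypothesis on cycle lengths only directly bounds by $k-1$; the cost is the extra appeal to Lemma~\ref{lem:headphone}, which is harmless since that lemma is already established at this point of the paper.
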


\begin{proof}
First note that since $D$ is strong, then so is $D_{\mathcal{C}}$. Suppose $\chi(D_{\mathcal{C}}) \geq 2k-2$. By Bondy's Theorem, there exists a directed cycle
$C = x_1\dots x_l$ of length at least $2k-2$ in $ D_{\mathcal{C}}$. We derive a cycle $C'$ in $D$ the following way:
Suppose the vertex $x_i$ corresponds to a component ${\cal S}_i$ of ${\cal C}$: the arc $x_{i-1}x_i$ corresponds in $D$ to an arc whose head
is a vertex $p_i$ of $\bigcup {\cal S}_i$, and the arc $x_ix_{i+1}$ corresponds to an arc whose tail is a vertex  $l_i$ of $\bigcup {\cal S}_i$. 
Let $P_i$ be a dipath 
from $p_i$ to $l_i$ in $D[{\cal S}_i]$. Note that $P_i$ intersects each cycle of $S_i$ on a, possibly empty, subpath of $P_i$. 
Then $C'$ is the cycle obtained from $C$ by replacing the vertices $x_i$ by the path $P_i$.

$C'$ is a cycle of $D$ of length at least $2k-2$ because it is no shorter than $C$. Let $C_1$ be a cycle of $\mathcal{C}$. By construction of $C'$ 
and $D_{\mathcal{C}}$, $C'$ and $C_1$ can intersect only along a subpath of one $P_i$. Suppose this dipath is more than just one vertex. Let $x$ and
$y$ the initial and terminal vertex of this path. Then the union of $C'[x,y]$,
$C_1[x,y]$ and $C_1[y,x]$ is a $B(k,1;1)$-subdivision.

So $C'$ is a cycle of length at least $2k-2$, intersecting each cycle of $\mathcal{C}$ on at most one vertex, and which does not belong to $\mathcal{C}$, for otherwise would be reduced
to one vertex in $D_{\mathcal{C}}$. This contradicts the fact that $\mathcal{C}$ is maximal.
\end{proof}

So we can finally prove Theorem \ref{th:P11k}.

\begin{proof}[Proof of Theorem~\ref{th:P11k}]
Let ${\cal C}$ be a maximal nice collection of cycle in $D$.  Lemmas \ref{lem:compo1}, \ref{lem:reduce} and \ref{lem:contrac} give the result.
\end{proof}

\section{$B(k,1;k)$}\label{sec:main}

In this section, we present a proof of Theorem \ref{th:main}.

We prove the result by the contrapositive. We consider a digraph $D$ that contains no subdivision of $B(k,1;k)$.
We shall prove that $\chi(D) \leq  C_k= \cste$.

Our proof heavily uses the notion of $k$-suitable collection of directed cycles, which can be seen as a generalization of the notion of nice collection of cycles used to prove Theorem~\ref{th:P11k}.

 A collection ${\cal C}$ of directed cycles is {\it $k$-suitable} if all cycles of ${\cal C}$ have length at least $8k$, and
any two distinct cycles $C_i,C_j\in\mathcal C$ intersect on  a subpath $P_{i,j}$ of order at most $k$.
We denote by $s_{i,j}$ (resp. $t_{i,j}$) the initial (resp. terminal) vertex of  $P_{i,j}$. 
the notion of nice collection seen before. 

The proof of Theorem \ref{th:main} uses the same idea as Theorem \ref{th:P11k}: take a maximal $k$-suitable collection of directed cycles ${\cal C}$; show that the digraph $D_{\cal C}$ obtained by contracting the components of ${\cal C}$ has bounded chromatic number, and that each component also has bounded chromatic; conclude using Lemma~\ref{lem:contrac}. However, because the intersection of cycles in this collection are more
complicated and because there might be arcs between cycles of the same component, bounding the chromatic number of the components is way more challenging. The next subsection is devoted to this.

 \subsection{$k$-suitable collections of directed cycles}

 \begin{lemma}\label{lem:dis}
 Let ${\cal C}$ be a $k$-suitable collection of directed cycles in a $B(k,1;k)$-\free\ digraph.
Let $C_1,C_2,C_3\in\mathcal C$ which pairwise intersect, and let $v$ belong to $V(C_2)\cap V(C_3)\setminus V(C_1)$. 
Then exactly one of the following holds:
\begin{itemize}
\item[(i)] $C_2[t_{1,2}, v]$ and $C_3[t_{1,3}, v]$ have both length less than $3k$; 
\item[(ii)] $C_2[v, s_{1,2}]$ and $C_3[v, s_{1,3}]$ have both length less than $3k$.
\end{itemize}
\end{lemma}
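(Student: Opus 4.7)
My plan is to prove the dichotomy in two halves, ``uniqueness'' (at most one of (i), (ii) holds) and ``existence'' (at least one does); only the second half will need the $B(k,1;k)$-freeness hypothesis.

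For uniqueness, I would note that $C_2[t_{1,2},v] \odot C_2[v,s_{1,2}] = C_2[t_{1,2},s_{1,2}]$, the portion of $C_2$ avoiding the interior of $P_{1,2}$, which has length at least $|C_2| - (|P_{1,2}|-1) \geq 8k - (k-1) = 7k+1$. If both (i) and (ii) held, both summands would be less than $3k$, giving a sum below $6k$ and contradicting this lower bound.

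For existence I would argue by contrapositive. Writing $\ell_i^+ := |C_i[t_{1,i},v]|$ and $\ell_i^- := |C_i[v, s_{1,i}]|$ for $i \in \{2,3\}$, the negations of (i) and (ii) give $\ell_2^+ \geq 3k$ or $\ell_3^+ \geq 3k$, and $\ell_2^- \geq 3k$ or $\ell_3^- \geq 3k$. Since $\ell_i^+ + \ell_i^- \geq 7k+1 > 6k$ for both $i$, no cycle can have both sides small, and after possibly swapping $C_2$ and $C_3$ I may assume the ``mixed case'' $\ell_2^+ \geq 3k$ and $\ell_3^- \geq 3k$. The central object is then the concatenation $R := C_2[t_{1,2},v] \odot C_3[v,s_{1,3}]$. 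To see $R$ is actually a dipath of length $\geq 6k$, I would observe that any potential overlap lies inside $P_{2,3} = C_2 \cap C_3$, which is traversed in the same direction by both cycles: the first factor only meets $P_{2,3}$ on its ``before-$v$'' half and the second only on its ``after-$v$'' half, so they agree solely at $v$. An analogous check using $P_{1,2}$ and $P_{1,3}$ shows that $R$ meets $C_1$ only at its endpoints $t_{1,2}$ and $s_{1,3}$. In the main sub-case $|C_1[s_{1,3},t_{1,2}]| \geq k$ (with $t_{1,2} \neq s_{1,3}$), I can then read off a $B(k,1;k)$-subdivision immediately: with $x = t_{1,2}$, $y = s_{1,3}$, the three internally disjoint dipaths $R$ (length $\geq 6k$), $C_1[t_{1,2}, s_{1,3}]$ (length $\geq 1$), and $C_1[s_{1,3}, t_{1,2}]$ (length $\geq k$) do the job.

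The main obstacle I anticipate is the residual sub-case $|C_1[s_{1,3}, t_{1,2}]| < k$, in which $P_{1,2}$ and $P_{1,3}$ are almost stacked on $C_1$ and the forward arc $C_1[t_{1,2}, s_{1,3}]$ is correspondingly long (length $> 7k$). I plan to dispatch this by passing to the symmetric mixed case $\ell_3^+ \geq 3k$ and $\ell_2^- \geq 3k$---which the sum bounds force outside of extreme regimes that admit ad hoc treatment---and running the analogous construction with $x = t_{1,3}$, $y = s_{1,2}$, the dipath $R' := C_3[t_{1,3},v] \odot C_2[v,s_{1,2}]$, and the now-long forward arc $C_1[s_{1,2}, t_{1,3}]$. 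Verifying internal disjointness carefully in the degenerate configurations where several of $s_{1,2}, t_{1,2}, s_{1,3}, t_{1,3}$ coincide (for instance when $t_{1,3} = s_{1,2}$) will require the most care.
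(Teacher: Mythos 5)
Your main construction is essentially sound: in the mixed case $\ell_2^+\geq 3k$, $\ell_3^-\geq 3k$ one can indeed show that $t_{1,2}\notin P_{2,3}$ and $s_{1,3}\notin P_{2,3}$ (if $t_{1,2}$ lay on $P_{2,3}$ before $v$ then $\ell_2^+<k$; if after $v$, then $P_{1,3}$, an arc of $C_3$ of length less than $k$ through $t_{1,2}$ avoiding $v$, would force $s_{1,3}$ onto $P_{2,3}$ just after $v$ and $\ell_3^-<k$; symmetrically for $s_{1,3}$), and this also excludes every triple point, after which your "before-$v$/after-$v$" disjointness argument and the non-coincidence $t_{1,2}\neq s_{1,3}$ go through. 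So the sub-case $|C_1[s_{1,3},t_{1,2}]|\geq k$ is fine, modulo these details you already flagged.

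The genuine gap is the residual sub-case. The negations of (i) and (ii) together with $\ell_i^++\ell_i^-\geq 7k+1$ give you \emph{one} mixed pair, not both: it is perfectly consistent to have $\ell_2^+>6k$ and $\ell_3^->6k$ while $\ell_2^-$ and $\ell_3^+$ are tiny, say both equal to $2$, so that $R'=C_3[t_{1,3},v]\odot C_2[v,s_{1,2}]$ has length less than $k$. Moreover, when $|C_1[s_{1,3},t_{1,2}]|<k$, disjointness of $P_{1,2}$ and $P_{1,3}$ forces the order $s_{1,3},t_{1,3},s_{1,2},t_{1,2}$ inside that short window, so the other candidate forward path $C_1[t_{1,3},s_{1,2}]$ also has length less than $k$; hence in this regime your symmetric bispindle at $(t_{1,3},s_{1,2})$ has no forward path of length $k$ and produces nothing. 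This configuration is compatible with everything you have assumed, so "the sum bounds force the symmetric mixed case outside of extreme regimes" is false, and the extreme regime is exactly the case that must be handled. The paper avoids the split on $|C_1[s_{1,3},t_{1,2}]|$ altogether by re-centring the bispindle at $t_{2,3}$ and $s_{1,2}$: it takes $C_2[s_{1,2},t_{2,3}]$ (which contains $C_2[t_{1,2},v]$, hence has length at least $3k$) as the returning path, the complementary arc $C_2[t_{2,3},s_{1,2}]$ as the short forward path, and $C_3[t_{2,3},s_{1,3}]\odot C_1[s_{1,3},s_{1,2}]$ (length at least $2k$) as the long forward path, which works uniformly once triple points are excluded. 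To complete your proof you need such a re-centred construction (or another argument) in the regime $\ell_3^++\ell_2^-<k$ and $|C_1[s_{1,3},t_{1,2}]|<k$.
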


\begin{proof}
Observe first that since $C_2$ has length at least $8k$ and $P_{1,2}$ has length at most $k-1$, the sum of lengths of $C_2[t_{1,2}, v]$ and  $C[v, s_{1,2}]$ is at least $7k+1$. Similarly, the sum of lengths of $C_2[t_{1,3}, v]$ and $C[v, s_{1,3}]$ is at least $7k+1$. In particular, if (i) holds, then (ii) does not hold and vice-versa.

Suppose for a contradiction that both (i) and (ii) do not hold. By symmetry and the above inequalities, we may assume that both $C_2[t_{1,2}, v]$ and $C_3[v, s_{1,3}]$ have length more than $3k$.
But $v\notin V(C_1)$, so $v\notin V(P_{1,3})$. Thus $C_3[v, t_{1,3}]$  has also length at least $3k$. 

If there is a vertex in $V(C_1)\cap V(C_2)\cap V(C_3)$, then $C_3[v,t_{1,3}]$ would have length less than $2k$ (since it would be contained in $P_{2,3}\cup P_{1,3}$ and each of those paths has length less than $k$), a contradiction.
Hence $V(C_1)\cap V(C_2)\cap V(C_3)=\emptyset$. In particular, $P_{1,2}$, $P_{1,3}$, and $P_{2,3}$ are disjoint.

The dipath  $C_2[s_{1,2}, t_{2,3}]$ has length at least $3k$ because it contains $C_2[t_{1,2},v]$. Moreover, the dipath $C_3[t_{2,3}, s_{1,3}]$ 
has length at least $2k$ because $C_3[v, s_{1,3}]$ has length at least $3k$ and $C_3[v,t_{2,3}]$ has length less than $k$.
Thus $C_3[t_{2,3}, s_{1,3}]\odot C_1[s_{1,3}, s_{1,2}]$ has length at least $2k$.
Consequently, the union of $C_2[s_{1,2}, t_{2,3}]$, $C_2[t_{2,3}, s_{1,2}]$, and $C_3[t_{2,3}, s_{1,3}]\odot C_1[s_{1,3}, s_{1,2}]$  is a subdivision of $B(k,1;k)$, a contradiction.
\end{proof}





Let ${\cal C}$ be a $k$-suitable collection of directed cycles.
For every set of vertices or digraph $S$, we denote by ${\cal C}\cap S$ the set of cycles of ${\cal C}$ that intersect $S$.

Let $C_1\in \mathcal{C}$.
For each $C_j\in {\cal C} \cap C_1$, let $Q_j$ be the subdipath of $C_j$ 
containing all the vertices that are at distance at most $3k$ from $P_{1,j}$ in the cycle underlying $C_j$.
Then the dipath $C_j[s(Q_j), s_{1,j}]$ and  $C_j[t_{1,j},t (Q_j)]$ have length $3k$.
Set $Q^-_j=C[s(Q_j), s_{1,j}[$ and $Q^+_j=C]t_{1,j}, t(Q_j)]$.

Set  $I(C_1) =C_1\cup \bigcup_{C_j\in {\cal C}\cap C_1} Q_j$, $I^+(C_1) = \bigcup_{C_j\in {\cal C}\cap C_1} Q^+_j$ and $I^-(C_1) = \bigcup_{C_j\in {\cal C}\cap C_1} Q^-_j$.
Observe that Lemma~\ref{lem:dis} implies directly the following.
\begin{corollary}\label{cor:util}
Let ${\cal C}$ be a $k$-suitable collection of directed cycles and let $C_1\in {\cal C}$.
\begin{itemize}
\item[(i)]  $I^+(C_1)$ and $I^-(C_1)$ are vertex-disjoint digraphs. 
\item[(ii)] $I^-(C_1) \cap C_j = Q^-_j$ and $I^+(C_1) \cap C_j = Q^+_j$, for all $C_j\in {\cal C}\cap C_1$.
\end{itemize}
 \end{corollary}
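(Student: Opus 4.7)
The plan is to handle both parts by feeding any hypothetical ``bad'' vertex $v$ into Lemma~\ref{lem:dis}, applied to the triple $C_1,C_i,C_j$ where $C_i,C_j\in\mathcal C\cap C_1$ are the cycles whose $Q^{\pm}$-arms are claimed to contain $v$. The dictionary I would use is: for a vertex $v\in V(C_j)\setminus V(P_{1,j})$, membership in $Q^+_j$ is essentially equivalent to $C_j[t_{1,j},v]$ having length at most $3k$, while membership in $Q^-_j$ is essentially equivalent to $C_j[v,s_{1,j}]$ having length at most $3k$. Under this dictionary, conclusion (i) of Lemma~\ref{lem:dis} reads ``$v\in Q^+_i\cap Q^+_j$'' and conclusion (ii) reads ``$v\in Q^-_i\cap Q^-_j$''.

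For part (i), I would assume $v\in I^+(C_1)\cap I^-(C_1)$, so that $v\in Q^+_i\cap Q^-_j$ for some $C_i,C_j\in\mathcal C\cap C_1$. If $i=j$, then $Q^+_i$ and $Q^-_i$ are the two subdipaths of $C_i$ of length $3k$ flanking $P_{1,i}$; they are disjoint since $|Q^+_i|+|Q^-_i|+|P_{1,i}|\le 3k+3k+(k-1)<8k\le|C_i|$. If $i\neq j$, then $v\in V(C_i)\cap V(C_j)$, and $v\notin V(C_1)$ because $v\in Q^+_i$ lies outside $P_{1,i}=V(C_1)\cap V(C_i)$. Lemma~\ref{lem:dis} then applies: conclusion (i) would force $v\in Q^+_j$, contradicting $v\in Q^-_j$, and conclusion (ii) would force $v\in Q^-_i$, contradicting $v\in Q^+_i$.

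For part (ii), the inclusions $Q^{\pm}_j\subseteq I^{\pm}(C_1)\cap V(C_j)$ are immediate from the definitions. For the reverse, I would pick $v\in I^-(C_1)\cap V(C_j)$, write $v\in Q^-_i$ for some $C_i$, and handle $i=j$ at once. When $i\neq j$, the same argument as above gives $v\notin V(C_1)$, so Lemma~\ref{lem:dis} applies to $C_1,C_i,C_j,v$: its conclusion (i) would yield $v\in Q^+_i$, contradicting $v\in Q^-_i$, so conclusion (ii) must hold and in particular $C_j[v,s_{1,j}]$ is short, placing $v$ in $Q^-_j$. The symmetric argument handles $I^+(C_1)\cap V(C_j)\subseteq Q^+_j$.

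The only point that requires real care, rather than true difficulty, is the minor discrepancy between ``length less than $3k$'' as stated in Lemma~\ref{lem:dis} and ``length at most $3k$'' hard-coded into the definition of the $Q^{\pm}_j$; the only vertices on which this could matter are the extreme endpoints $s(Q_i)$ and $t(Q_i)$ of $Q_i$, and a direct check using $|C_i|\ge 8k$ and $|P_{1,i}|\le k-1$ disposes of them.
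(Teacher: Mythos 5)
Your proposal is correct and takes the same route the paper intends: the paper presents the corollary as an immediate consequence of Lemma~\ref{lem:dis} with no written proof, and your argument --- translating the lemma's two alternatives into ``$v\in Q^+_i\cap Q^+_j$'' versus ``$v\in Q^-_i\cap Q^-_j$'', establishing $Q^+_l\cap Q^-_l=\emptyset$ by the count $3k+3k+(k-1)<8k$, and then deriving a contradiction from either alternative --- is exactly how that implication unfolds. One small remark: the strict/non-strict discrepancy you flag at the end is harmless in a stronger sense than you indicate, since you never feed the inequality ``$\le 3k$'' into Lemma~\ref{lem:dis} as a hypothesis; you only use the lemma's output, which hands you ``$<3k$'', and that trivially implies membership in the relevant $Q^{\pm}$.
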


\begin{lemma}\label{lem:A}
Let ${\cal C}$ be a $k$-suitable collection of directed cycles in a $B(k,1;k)$-\free\ digraph $D$.
Let $C_1$ be a cycle of ${\cal C}$ and let $A$ be a connected component of $\bigcup{\cal C} - I(C_1)$.
All vertices of $\bigcup({\cal C}\cap A)  - A$ belongs to a unique cycle $C_A$ of $\mathcal{C}$.
\end{lemma}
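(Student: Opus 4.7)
The plan is to prove the lemma by showing that at most one cycle of ${\cal C}\cap A$ actually contributes vertices outside $A$; this cycle, if it exists, will be the desired $C_A$. First I would isolate the relevant subfamily
\[
{\cal D} := \{C \in {\cal C}\cap A : V(C) \not\subseteq V(A)\}.
\]
Any $C \in {\cal D}$ must meet $V(I(C_1))$: since $A$ is a connected component of $\bigcup{\cal C} - I(C_1)$ and $C$ is itself connected, a cycle with vertices both inside $A$ and outside $A$ within $\bigcup{\cal C}$ must visit $V(I(C_1))$. If ${\cal D} = \emptyset$, then $\bigcup({\cal C}\cap A) - A = \emptyset$ and the statement holds for any choice of $C_A$.

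Assume then that ${\cal D}$ is non-empty and fix some $C^* \in {\cal D}$; set $C_A := C^*$. I would show that every $C \in {\cal D}$ satisfies $V(C) \setminus V(A) \subseteq V(C^*)$. Suppose, for a contradiction, that some $C \in {\cal D}$ has a vertex $v \in V(C) \setminus V(A) \setminus V(C^*)$. Since both $C$ and $C^*$ meet $A$, the connectedness of $A$ yields a path within $A$ joining a vertex of $V(C) \cap V(A)$ to a vertex of $V(C^*) \cap V(A)$, built from arcs of cycles of ${\cal C}\cap A$ that stay in $A$. The plan is then to glue this connection to carefully chosen dipath portions of $C$, $C^*$, $C_1$ and the relevant wings $Q_l$, in order to exhibit a pair of endpoints $x,y$ together with two internally disjoint $(x,y)$-dipaths of lengths at least $k$ and $1$ and a $(y,x)$-dipath of length at least $k$ — that is, a subdivision of $B(k,1;k)$, contradicting the hypothesis on $D$.

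The main obstacle is carrying out this gluing in every configuration. The construction splits into cases according to whether each of $C$ and $C^*$ meets $C_1$ directly (hence coincides with some $C_j \in {\cal C}\cap C_1$) or only reaches $I(C_1)$ through an intersection with a wing $Q_l$ (an intersection of length at most $k-1$). The key structural tools are Lemma~\ref{lem:dis} and Corollary~\ref{cor:util}: they force the pairwise intersection of two distinct cycles of ${\cal C} \cap C_1$ outside $C_1$ to lie entirely in one of $I^+(C_1)$ or $I^-(C_1)$, never straddling both. This is what allows the various chosen subpaths to be kept internally disjoint. Combined with the lower bound $8k$ on each cycle length and the upper bound $k$ on the length of pairwise intersections, it supplies the slack needed for the long subpaths of $C_1$ in the construction to have length at least $k$. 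Once the contradiction is reached, ${\cal D}$ has at most one element, which serves as $C_A$, and the lemma follows.
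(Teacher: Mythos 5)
Your high-level plan — argue by contradiction and build a subdivision of $B(k,1;k)$ by routing through $A$ — matches the paper's strategy, and your observation that a cycle of ${\cal C}\cap A$ that escapes $A$ must do so through $I(C_1)$ is correct and is the implicit starting point of the paper's argument. However, you explicitly leave the core of the proof undone ("The main obstacle is carrying out this gluing in every configuration"), and that gap is exactly where all the work lies. In the paper the contradiction is not reached by a generic gluing: it proceeds by singling out two cycles $C_2, C_3$ of ${\cal C}\cap A$ that meet $C_1$, picking a $(C_3,C_2)$-dipath $Q_A$ inside $D[A]$ via the minimality of the connecting sequence, forming the long dipath $R_3 = C_1[t_{1,2},t_{1,3}]\odot Q_3$, and then tracking two carefully chosen vertices: $v$, the last vertex of $Q_2\cap R_3$ along $Q_2$, and $w$, the endpoint on $C_2$ of a $(t(Q_3),C_2)$-dipath $R_A$ in $D[A]$. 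From these one deduces a chain of quantitative constraints ($C_2[w,v]$ has length at least $3k$, hence both $C_2[v,t(Q_2)]$ and $R_3[v,t(Q_3)]$ have length less than $k$, hence $s_{2,3}\in Q_2\cap R_3$ and $C_2[t_{1,2},s_{2,3}]$ has length at least $k$), and only then does one exhibit the forbidden $B(k,1;k)$-subdivision as the union of $C_2[s_{2,3},t_{1,2}]$, $C_2[t_{1,2},s_{2,3}]$ and $R_3[t_{1,2},s_{2,3}]$. None of this case analysis appears in your sketch.

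There is a secondary issue with the tools you invoke. You lean on Lemma~\ref{lem:dis} and Corollary~\ref{cor:util} as forcing pairwise intersections of cycles of ${\cal C}\cap C_1$ to lie entirely in $I^+(C_1)$ or $I^-(C_1)$, and you suggest this disjointness plus the $8k$ length bound gives "the slack needed for the long subpaths of $C_1$ in the construction to have length at least $k$." But the paper's proof of this lemma uses Corollary~\ref{cor:util} in a different and more delicate way — to show that $C_3[t(Q_3),s(Q_A)]$ lies inside $D[A]$, which is what makes $R_A$ internally disjoint from the rest — and the length bookkeeping is carried out on $C_2$ and $Q_2, Q_3$, not on subpaths of $C_1$. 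So the sketch points at the right lemmas but the actual mechanism is missing. As it stands, the proposal is a correct outline of intent rather than a proof: the one thing that requires proof — the explicit construction of the $B(k,1;k)$-subdivision and the verification of its three length constraints — is acknowledged but not supplied.
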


\begin{proof}
Suppose it is not the case. Then there are two distinct cycles $C_2, C_3$ of ${\cal C}\cap A$ that intersect with $C_1$.
Observe that there is a sequence of distinct cycles $C_2=C^*_1, C^*_2, \dots , C^*_q=C_3$ of cycles of ${\cal C}\cap A$ such that $C^*_j\cap C^*_{j+1}\neq \emptyset$ because $A$ is a connected component of $\bigcup{\cal C} - I(C_1)$.
Free to consider the first $C^*_j \neq C_2$ in this sequence such that $V(C^*_j)\not\subseteq A$ in place of $C_3$, we may assume
 that all $C^*_j$, $2\leq j\leq q-1$, have all their vertices in $A$.
 In particular, there exist a  $(C_3,C_2)$-dipath $Q_A$ in $D[A]$.

Let $R_3=C_1[t_{1,2},t_{1,3}]\odot Q_3$. Clearly, $R_3$ has length at least $3k$.
Let $v$ be the last vertex in $Q_2\cap R_3$ along $Q_2$. (This vertex exists since $t_{1,2}\in Q_2\cap R_3$.) Since there is a $(C_3,C_2)$-dipath in $D[A]$, by Corollary~\ref{cor:util}, $C_3[t(Q_3), s(Q_A)]$ is in $D[A]$. Thus  there exists a $(t(Q_3), C_2)$-dipath $R_A$ in $D[A]$.  Let $w$ be its terminal vertex. By definition of $A$, $w$ is in $C_2[t(Q_2), s(Q_2)]$, therefore $C_2[w,v]$ has length at least $3k$ since it contains $C_2[s(Q_2), s_{1,2}]$. Consequently, both $C_2[v,t(Q_2)]$ and $R_3[v,t(Q_3)]$ have length less than $k$ for otherwise the union of $C_2[w,v]$, $C_2[v,w]$ and $R_3[v,t(Q_3)]\odot R_A$ would be a subdivision of $B(k,1;k)$.
In particular, $v\neq t(Q_2)$. This implies that $s_{2,3}\in V(Q_2\cap R_3)$.
Moreover, $Q_2[s_{2,3}, t(Q_2)]$ has length less than $2k$ because $Q_2[s_{2,3},v]$ is a subdipath of $P_{2,3}$ and so has length less than $k$.
Therefore $C_2[t_{1,2},s_{2,3}]=Q_2[t_{1,2},s_{2,3}]$ has length at least $k$ because $Q_2$ has length at least $3k$.
It follows that the union of $C_2[s_{2,3},t_{1,2}]$,  $C_2[t_{1,2},s_{2,3}]$ and  $R_3[t_{1,2},s_{2,3}]$ is a subdivision of $B(k,1;k)$, a contradiction.
\end{proof}

\begin{lemma}\label{lem:no-dicycle}
Let ${\cal C}$ be a $k$-suitable collection of directed cycles in a $B(k,1;k)$-\free\ digraph. For any cycle $C_1\in {\cal C}$, the digraph
$I^+(C_1)$ has no directed cycle.  
\end{lemma}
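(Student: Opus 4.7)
I would argue by contradiction: assume that $I^+(C_1)$ contains a directed cycle, and pick such a cycle $Z$ of minimum length. Since each $Q^+_j$ is a subdipath of the directed cycle $C_j$ and therefore acyclic, $Z$ cannot be contained in any single $Q^+_j$, so it must use arcs of at least two distinct cycles of $\mathcal{C}\cap C_1$. Moreover, $V(Z)\cap V(C_1)=\emptyset$: for every $C_j\in\mathcal{C}\cap C_1$, $V(Q^+_j)\cap V(C_1)\subseteq V(P_{1,j})\cap V(Q^+_j)=\emptyset$ by definition of $Q^+_j = C_j]t_{1,j},t(Q_j)]$. One can then pick a \emph{switch vertex} $v\in V(Z)$ whose incoming arc of $Z$ belongs to some $C_j$ and whose outgoing arc belongs to some $C_\ell$ with $\ell\neq j$; it satisfies $v\in V(Q^+_j)\cap V(Q^+_\ell)\setminus V(C_1)$ and $v\in V(P_{j,\ell})$.

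Next I would apply Lemma~\ref{lem:dis} to the triple $(C_1,C_j,C_\ell)$ at $v$. Since $v\in Q^+_j\cap Q^+_\ell$, both $|C_j[t_{1,j},v]|$ and $|C_\ell[t_{1,\ell},v]|$ are at most $3k$; combined with $|C_j|,|C_\ell|\geq 8k$, this forces case (i) to hold strictly (case (ii) would require $|C_j[v,s_{1,j}]|<3k$, yet $|C_j[v,s_{1,j}]|\geq|C_j|-(k-1)-3k>3k$). Consequently $|C_j[t_{1,j},v]|<3k$ and $|C_\ell[t_{1,\ell},v]|<3k$, so the ``long-way-around'' dipaths $C_j[v,s_{1,j}]$ and $C_\ell[v,s_{1,\ell}]$ each have length at least $|C_j|-(k-1)-(3k-1)\geq 4k$.

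Finally I would extract a $B(k,1;k)$-subdivision. Consider the concatenation
\[
P_1 \;=\; C_j[t_{1,j},v]\;\odot\; C_\ell[v,s_{1,\ell}],
\]
a $(t_{1,j},s_{1,\ell})$-dipath of length at least $4k+1$ whose internal vertices lie entirely outside $V(C_1)$: the first segment stays in $V(Q^+_j)$, and the second stops at $s_{1,\ell}$ without entering $V(P_{1,\ell})$. Setting $P_2=C_1[t_{1,j},s_{1,\ell}]$ and $P_3=C_1[s_{1,\ell},t_{1,j}]$, the three dipaths are pairwise internally disjoint, and $|P_2|+|P_3|=|C_1|\geq 8k$ forces at least one of them to have length $\geq 4k$. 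If $|P_3|\geq k$, then $P_1,P_2,P_3$ form a $B(k,1;k)$-subdivision between $t_{1,j}$ and $s_{1,\ell}$, contradicting our assumption. Otherwise $|P_2|\geq 7k$, and the symmetric construction exchanging the roles of $j$ and $\ell$ ---using $P_1'=C_\ell[t_{1,\ell},v]\odot C_j[v,s_{1,j}]$ together with the two $C_1$-subdipaths between $t_{1,\ell}$ and $s_{1,j}$--- yields a $B(k,1;k)$-subdivision between $t_{1,\ell}$ and $s_{1,j}$.

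The principal technical obstacle I foresee is handling the pathological configuration in which both symmetric attempts fail simultaneously, i.e., when $|C_1[s_{1,\ell},t_{1,j}]|<k$ and $|C_1[s_{1,j},t_{1,\ell}]|<k$ both hold. A careful length analysis on $C_1$ shows this forces a nested containment such as $V(P_{1,j})\subseteq V(P_{1,\ell})$. To rule this out I would use the fact that $Z$ actually uses arcs of at least three cycles---for any pair $(i,j)$, the digraph $Q^+_i\cup Q^+_j$ is the union of two dipaths sharing a common subpath traversed in the same direction, hence acyclic---so multiple switch pairs are available on $Z$, and a careful choice among them avoids the pathological overlap and yields the required $B(k,1;k)$-subdivision.
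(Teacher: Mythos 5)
Your reduction to a ``switch vertex'' $v\in Q^+_j\cap Q^+_\ell$ and your construction in the non-pathological case are essentially sound (modulo small repairs: the two segments of $P_1=C_j[t_{1,j},v]\odot C_\ell[v,s_{1,\ell}]$ can meet in more than $v$ when $P_{j,\ell}$ reaches back past $s_{1,\ell}$, and the degenerate situation $t_{1,j}=s_{1,\ell}$ needs a word). The genuine gap is exactly the case you flag at the end and then wave away. If $\lvert C_1[s_{1,\ell},t_{1,j}]\rvert<k$ and $\lvert C_1[s_{1,j},t_{1,\ell}]\rvert<k$, the paths $P_{1,j}$ and $P_{1,\ell}$ overlap inside a window of length $<2k$ on $C_1$, and your proposed fix --- ``$Z$ meets at least three cycles, so choose a better switch pair'' --- does not go through: nothing prevents \emph{all} the paths $P_{1,m}$ of the cycles traversed by $Z$ from clustering in that same short window, in which case \emph{every} switch pair is pathological. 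Worse, in that configuration no $B(k,1;k)$-subdivision can be extracted by your template at all: any ``return'' dipath you can form from $C_1$ or from a single intersection path $P_{m,m'}$ has length $<k$ (the theta formed by two cycles of a $k$-suitable collection always has a short middle path), so two cycles plus $C_1$ plus an arc of $Z$ only ever yield a bispindle whose backward path is too short. Your correct observation that $Q^+_i\cup Q^+_j$ is acyclic (so $Z$ uses at least three cycles) does not by itself produce the missing long return path.

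This clustered case is precisely what the paper's proof is built to handle, and it does so by a genuinely different, more global construction that your sketch lacks: after normalising so that $C'\cap Q^+_j$ is a single dipath $P^+_j=C_j[a_j,b_j]$ for each cycle used, it forms the closed directed walk $W=C_p[a_2,b_p]\odot B_p\odot\cdots\odot B_3\odot C_2[a_3,b_2]\odot B_2$ made of the \emph{long} complementary arcs $B_m=C_m[b_m,a_m]$ (each of length at least $4k$), extracts a directed cycle $C_W$ from $W$, and obtains the $B(k,1;k)$-subdivision from $C_W[b_2,a_4]$, $C_W[a_4,b_2]$ and the segment $C'[b_2,a_4]$ of the original cycle, using Lemma~\ref{lem:dis} only to guarantee one of the length bounds; note that $C_1$ plays no role in the final bispindle. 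Without this (or an equivalent mechanism for manufacturing a long backward path when all intersections with $C_1$ coincide), your argument does not close, so as it stands the proposal is incomplete at its crucial step.
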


\begin{proof}

Suppose for a contradiction that $I^+(C_1)$  contains a directed cycle $C'$.
Clearly, it must contain arcs from at least two $Q^+_j$.

Assume that $C'$ contains several vertices of $Q^+_j$.
Necessarily, there must be two vertices $x,y$ of $Q^+_j\cap C'$ such that no vertex of $C']x,y[$ is in $C_j$ and  $y$ is before $x$ in $Q^+_j$.
Therefore $C'[x,y]\odot Q^+[y,x]$ is also a directed cycle in $I^+(C_1)$. Free to consider this cycle,
we may assume that $C'\cap Q^+_j$ is a directed path.

Doing so, for all $j$, we may assume that $C'\cap Q^+_j$ is a directed path  for every $C_j\in {\cal C}\cap C_1$.
Without loss of generality, we may assume that there are cycles $C_2, \dots , C_p$ such that
\begin{itemize}
\item $C'$ is in $Q^+_2\cup \cdots  \cup Q^+_p$;
\item for all $2\leq j\leq p$, $C'\cap Q^+_j$ is a directed path $P^+_j$ with initial vertex $a_j$ and terminal vertex $b_j$;
\item the $a_j$ and the $b_j$ appear according to the following order around $C'$: $(a_2, b_p, a_3, b_2, \dots ,   a_p, b_{p-1}, a_2)$ with possibly $a_{j+1}=b_j$ for some $1\leq j \leq p$ where $a_{p+1}=a_2$.
\end{itemize}
For $2\leq j\leq p$, set $B_j=C_j[b_j, a_j]$. Note that $B_j$ has length at least $4k$, because $Q^+_2$ has length less than $3k$.

Consider the closed directed walk $$W=C_p[a_2,b_p]\odot B_p \odot C_{p-1}[a_p, b_{p-1}] \odot  \cdots  \odot  B_3\odot C_2[a_3, b_2]\odot B_2.$$
$W$ contains a directed cycle $C_W$. Wihtout loss of generality, we may assume that this cycle is of the form
$$C_W=B_q[v, a_q] \odot C_{q-1}[a_q, b_{q-1}] \odot  \cdots  \odot  B_3\odot C_2[a_3, b_2]\odot B_2[b_2, v]$$
for some vertex $v\in B_2\cap B_q$. (The case when $W$ is a directed cycle corresponds to $q=p+1$ and $B_2=B_{p+1}$.)

Note that necessarily, $q\geq 4$, for $B_3$ does not intersect $B_2$, for otherwise $b_3=b_2$ since the intersection of $C_2$ and $C_3$ is a dipath.

Observe that $C_W[b_2,v]=C_2[b_2,v]$ or $C_W[v, a_4]$ has length at least $k$.
Indeed, if $q=p+1$, then it follows from the fact that $B_2$ has length as least $4k$; if
$5\leq q\leq p$, then it comes form the fact that $B_4$ is a subdipath of $C_W[v, a_r]$; if $q=4$, then it follows from Lemma~\ref{lem:dis}  applied to $C_3$, $C_2$, $C_4$ in the role of $C_1$, $C_2$, $C_3$ respectively.
In both case, $C_W[b_2, a_4]$ has length at least $k$. 

Furthermore, $C_W[a_4,b_2]$ has length at least $k$ because it contains $B_3$. Therefore the union of
$C_W[b_2, a_4]$, $C_W[a_4,b_2]$ and $C'[b_2,a_4]=C_3[b_3,a_4]$ is a subdivision of of $B(k,1;k)$, a contradiction.
\end{proof}

Let $\phi$ be a colouring of $G$. A subset of vertices or a subgraph $S$ of $G$ is {\it rainbow-coloured} by $\phi$ if all vertices of $S$ have distinct colours.

Set $\alpha_k=2\cdot \col + 14k$.

\begin{lemma}\label{lem:IC}
Let ${\cal C}$ be a $k$-suitable collection of directed cycles in a $B(k,1;k)$-\free\ digraph.

Let $\phi$ be a partial colouring of a cycle $C_1\in {\cal C}$ such that only a path of length at most
$7k$ is coloured and this path is rainbow-coloured. Then $\phi$ can be extended into
a colouring of $I(C_1)$ using $\alpha_k$ colours, such that every subpath of length at most $7k$ of $C_1$ is rainbow-coloured and  $Q_j$ is rainbow-coloured, for every $C_j\in {\cal C}\cap C_1$.\end{lemma}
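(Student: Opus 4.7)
The plan is to extend $\phi$ in three stages, using three mutually disjoint palettes: a palette $S_0$ of size $14k$ for $V(C_1)$ (containing the colours already used by $\phi$), and two fresh palettes $S^+$ and $S^-$ of size $\col$ each for $V(I^+(C_1))\setminus V(C_1)$ and $V(I^-(C_1))\setminus V(C_1)$ respectively. By Corollary~\ref{cor:util}(i) these three vertex sets are pairwise disjoint, so the combined assignment uses $14k+2\col=\alpha_k$ colours. Moreover, since each $Q_j$ decomposes as $Q_j^-\odot P_{1,j}\odot Q_j^+$ with $P_{1,j}\subseteq C_1$, $Q_j^-\subseteq I^-(C_1)\setminus C_1$, and $Q_j^+\subseteq I^+(C_1)\setminus C_1$, rainbow-colouring each piece in its own palette makes $Q_j$ automatically rainbow.

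In stage one I would extend $\phi$ to a colouring of $C_1$ in $S_0$ so that every subpath of length at most $7k$ is rainbow. Walking around $C_1$ from the end of the pre-coloured path, I would greedily assign each newly encountered vertex a colour in $S_0$ distinct from those of the $7k$ already-coloured vertices preceding it on $C_1$; during the last few steps, when the wrap-around brings part of the pre-coloured path within the forward $7k$-window, I would additionally exclude those colours. A careful bookkeeping, using that the pre-coloured path is itself rainbow of length at most $7k$ and that $|C_1|\ge 8k$, keeps the forbidden-colour count at most $14k-1$ at every step, so the palette $S_0$ of size $14k$ suffices.

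In stage two I would colour $I^+(C_1)\setminus C_1$ using $S^+$ so that each $Q_j^+$ is rainbow. By Lemma~\ref{lem:no-dicycle}, the digraph $I^+(C_1)$ is acyclic, so I would fix a topological ordering of its vertices and process them in order; each $v$ receives a colour in $S^+$ distinct from every already-coloured vertex sharing some $Q_j^+$ with $v$. Because each $Q_j^+$ is a directed path of length $3k-1$ consistent with the topological order, only its at most $3k-1$ predecessors of $v$ can conflict per cycle $C_j$ through $v$. The key point is to control the \emph{total} conflict count across all $j$ with $v\in Q_j^+$: I propose to do this by attaching to each $v$ a structured signature in $[6k^2]^{3k}$---encoding, at each of the $3k$ possible positions, a $6k^2$-valued local fingerprint of the $Q_j^+$'s through $v$---and using the signature itself as the colour. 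Stage three is symmetric: the analogue of Lemma~\ref{lem:no-dicycle} for $I^-(C_1)$ lets the same reasoning, with reversed topological order, colour $I^-(C_1)\setminus C_1$ within $S^-$.

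The hard part is verifying the signature argument in stage two---concretely, that no two distinct vertices of a common $Q_j^+$ receive the same signature. I expect to establish this by the $B(k,1;k)$-subdivision-free hypothesis combined with Lemma~\ref{lem:dis}: a signature collision would allow us to splice a long $(x,y)$-dipath inside some $C_j$, the short intersection $P_{1,j}$, and a $(y,x)$-dipath of length at least $k$ assembled from $C_1$ and segments of other cycles, producing a $B(k,1;k)$-subdivision, a contradiction. The pigeonhole Lemmas~\ref{min} and~\ref{max} should be natural tools to locate such a repetition whenever the local conflict count at some depth exceeds $6k^2$.
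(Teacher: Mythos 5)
Your setup coincides with the paper's: the same three disjoint palettes ($14k$ colours for $C_1$ and $\col$ fresh colours for each of $I^+(C_1)$ and $I^-(C_1)$), disjointness of the three parts via Corollary~\ref{cor:util}, the decomposition $Q_j=Q^-_j\odot P_{1,j}\odot Q^+_j$ to get rainbow $Q_j$'s, and a routine greedy extension on $C_1$. The genuine gap is your stage two. You correctly diagnose why a naive greedy colouring along a topological order of $I^+(C_1)$ fails (a vertex may lie on arbitrarily many cycles $C_j$, so the number of forbidden colours is unbounded), but the replacement you offer --- a ``signature in $[6k^2]^{3k}$'' whose coordinates are unspecified ``local fingerprints'', used directly as the colour --- is not a construction: the signature is never defined, and the crucial claim that a collision of two vertices on a common $Q^+_j$ forces a $B(k,1;k)$-subdivision is only stated as an expectation. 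Lemma~\ref{lem:dis} alone cannot deliver it: two vertices of a common $Q^+_j$ only bound a segment of $C_j$ of length at most $3k$, and producing the two long disjoint dipaths of a $B(k,1;k)$ requires a long return route whose existence is exactly the hard point you have not addressed.

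The paper's route is genuinely different at this step and never builds an explicit colouring. It encodes the rainbow constraints as the auxiliary digraph $D^+_1=\bigcup_j T^+_j$, where $T^+_j$ is the transitive tournament with hamiltonian dipath $Q^+_j$, so that a proper colouring of $D^+_1$ is precisely a colouring making every $Q^+_j$ rainbow, and then proves $\chi(D^+_1)\le \col$ by contradiction: if not, Theorem~\ref{thm:gallairoy} yields a directed path of length $\col$ in $D^+_1$; replacing fake arcs by subpaths of the $Q^+_j$ and invoking Lemma~\ref{lem:no-dicycle} (which you use only to obtain a topological order) turns it into a genuine directed path $P$ of length at least $\col$ in $I^+(C_1)$. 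The distance labels $\Dis(v)$ along the cycles, Lemma~\ref{min} with parameter $6k^2$ and Lemma~\ref{max} then extract from $P$ three controlled segments $P_1,P_2,P_3$ of cycles of ${\cal C}\cap C_1$, and a case analysis on their intersections (splicing through $C_1$ when necessary) always assembles a $B(k,1;k)$-subdivision. In particular $(6k^2)^{3k}$ arises as the path length needed for this double pigeonhole, not as the size of a signature space. To complete your proof you would have to define your fingerprints and prove the collision-implies-subdivision claim, which in effect means rediscovering this Gallai--Roy long-path argument; as written, the core of the lemma is missing.
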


\begin{proof}
We can easily extend $\phi$ to $C_1$ using $14k$ colours (including the at most $7k$ already used colours)
so that every subpath of $C_1$ of length $7k$ is rainbow-coloured.

We shall now prove that there exists a colouring $\phi^+$ of $I^+(C_1)$ with $\col$ (new) colours so that $Q^+_j$ is rainbow-coloured for every $C_j\in {\cal C}\cap C_1$, and a colouring $\phi^-$ of $I^-(C_1)$ with $\col$ (other new) colours so that $Q^-_j$ is rainbow-coloured for every $C_j\in {\cal C}\cap C_1$.
The union of the three colourings $\phi$, $\phi^+$, and $\phi^-$ is clearly the desired colouring of $I(C_1)$. (Observe that a vertex of $I(C_1)$ is coloured only once because  $C_1$, $I^+(C_1)$ and $I^-(C_1)$ are disjoint by Corollary~\ref{cor:util}.)

\smallskip

It remains to prove the existence of $\phi^+$ and $\phi^-$. By symmetry, it suffices to prove the existence of $\phi^+$.
 To do so, we consider an auxiliary digraph $D^+_1$.
 For each $C_j\in {\cal C}\cap C_1$, let $T^+_j$ be the transitive tournament whose hamiltonian dipath is $Q^+_j$.
Let $D^+_1=\bigcup_{C_j\in  {\cal C}\cap C_1} T^+_j$. 
The arcs of the $A(T^+_j)\setminus A(Q^+_j)$ are called {\it fake arcs}.
Clearly, $\phi^+$ exists if and only if $D^+_1$ admits a proper $\col$-colouring. Henceforth it remains to prove the following claim.

\begin{claim}\label{claim:c+}
$\chi(D^+_1) \leq  \col$.
\end{claim}

\begin{subproof}
To each vertex $v$ in $I^+(C_1)$ we associate the set $\Dis(v)$ of the lengths of the $C_j[t_{1,j},v]$ for all cycle $C_j\in {\cal C} \cap C_1$ containing $v$ such that $C_j[t_{1,j},v]$ has length at most $3k$.

Suppose for a contradiction that $\chi(D^+_1) \leq  \col$.
By Theorem \ref{thm:gallairoy}, $D^+_1$ admits a directed path of length $\col$. Replacing all fake arcs  $(u,v)$ in some $A(T^+_j)$, by 
$Q^+_j[u,v]$ we obtain a directed walk $P$ in $I^+(C_1)$ of length at least $\col$. By Lemma \ref{lem:no-dicycle}, $P$ is necessarily a directed path. Set $P=(v_1,  \dots , v_{p})$. We have $p\geq \col$.

For $1\leq i\leq p$, let $m_i = \min \Dis(v_i)$. 
Lemma \ref{min} applied to $(m_i)_{1\leq i\leq p}$ yields a set $L$ of $6k^2$ indices of such that 
for any $i< j \in L$,  $m_i=m_j$ and $m_k > m_i$, for all $i< k < j$.
Let $l_1 < l_2 < \cdots < l_{6k^2}$ be the elements of $L$ and let $m= m_{l_1} = \cdots = m_{l_{6k^2}}$.

For $1\leq j\leq 6k^2-1$, let $M_j = \max \bigcup_{l_j\leq i < l_{j+1}} \Dis(v_i)$.
By definition $M_j\leq 3k$.
Applying Lemma~\ref{max} to $(M_j)_{1\leq j\leq 6k^2}$,  we get a sequence of size $2k$ $M_{j_0+1} \dots M_{j_0+{2k}}$ such that $M_{j_0+{2k}}$ is the greatest. For sake of simplicity, we set $\ell_i =j_0+i$ for $1\leq i\leq 2k$.
Let $f$ the smallest index not smaller than $\ell_{2k}$ for which $M_{\ell_{2k}} \in \Dis (v_f)$. 

Let $j_1$ be an index such that $C_{j_1}[t_{1,j_1},v_{\ell_1}]$ has length $m$ and set $P_1=C_{j_1}[t_{1,j_1},v_{\ell_1}]$.
Let $j_2$ be an index such that $C_{j_2}[t_{1,j_2},v_{\ell_k}]$ has length $m$ and set $P_2=C_{j_2}[t_{1,j_2},v_{\ell_k}]$.
Let $j_3$ be an index such that  $C_{j_3}[t_{1,j_3},v_{f}]$ has length $M_{\ell_{2k}}$ and set $P_3=C_{j_3}[v_f, s_{1,j_3}]$ (some vertices of $P_3$ are not in $I^+(C_1)$).

Note that any internal vertex $x$ of $P_1$ or $P_2$ has an integer in $\Dis(x)$
which is smaller than $m$ and every internal vertex $y$ of $P_3$ has an integer in $\Dis(y)$ which
is greater than $M_{\ell_{2k}}$, or does not belong to $I^+(C_1)$. Hence, 
$P_1$, $P_2$ and $P_3$ are disjoint from $P[v_{\ell_1},v_f]$.

We distinguish between the intersection of $P_1$, $P_2$ and $P_3$:

\begin{itemize}
	\item Suppose $P_3$ does not intersect $P_1 \cup P_2$.
	\begin{itemize}
		\item Assume first that $P_1$ and $P_2$ are disjoint. If $s(P_1)$ is in $C_1[t(P_3), s(P_2)]$, then the union of  $P_1 \odot P[v_{\ell_1}, v_{\ell_k}]$, $P[v_{\ell_k}, v_f] \odot P_3\odot C_1[t(P_3), s(P_1)]$ and $C_1[s(P_1), s(P_2)]\odot P_2$ is a subdivision of $B(k,1;k)$, a contradiction.
		 If $s(P_1)$ is in $C_1[ s(P_2), t(P_3)]$, then the union of  $C_1[s(P_2), s(P_1)]\odot P_1\odot P[v_{\ell_1}, v_{\ell_k}]$, $P[v_{\ell_k}, v_f]\odot  P_3\odot C_1[t(P_3), s(P_2)]$, and  $P_2$ is a subdivision of $B(k,1;k)$, a contradiction.
	
		\item Assume now $P_1$ and $P_2$ intersect. Let $u$ be the last vertex along $P_2$ on which they intersect. The union of $P_1[u,v_{\ell_1}]\odot P[v_{\ell_1}, v_{\ell_k}]$, $P[v_{\ell_k}, v_f]\odot P_3\odot C[t(P_3), s(P_1)]\odot P_1[s(P_1), u]$, and $P_2[u, v_{\ell_k}]$ is a subdivision of $B(k,1;k)$, a contradiction.
	\end{itemize}

	\item Assume $P_3$ intersect $P_1\cap P_2$. Let $v$ be the first vertex along $P_3$ in $P_1\cap P_2$ and let $u$ be the last vertex of $P_1\cap P_2$ along $P_2$. The union of $P_1[u,v_{\ell_1}]\odot P[v_{\ell_1}, v_{\ell_k}]$, $P[v_{\ell_k}, v_f]\odot P_3[v_f,v]\odot P_1[v, u]$, and $P_2[u, v_{\ell_k}]$ is a subdivision of $B(k,1;k)$, a contradiction.

	\item Assume now that $P_3$ intersects $P_1\cup P_2$ but not $P_1\cap P_2$. Let $v$ be the first vertex along $P_3$ in $P_1\cup P_2$.
	\begin{itemize}
	\item If $v \in P_2$, let $u$ be the last vertex on $P_2\cap P_3$ along $P_3$. Observe that $P_3[v,u]$ is also a subpath of $P_2$ and therefore contains no vertex of $P_1$. Furthermore, there is a dipath $Q$ from $u$ to $v_{\ell_1}$ in $P_3[u, t(P_3)]\cup C_1\cup  P_1$. Hence, the union of $P[v_{\ell_k}, v_f] \odot P_3[v_f,v]$, $Q\odot P[v_{\ell_1},v_{\ell_k}]$, and $P_2[u,v_{\ell_k}]$ is a subdivision of $B(k,1;k)$, a contradiction.
	
	\item If $v\in P_1$, let $u$ be the last vertex on $P_1\cap P_3$ along $P_3$.  Observe that $P_3[v,u]$ is also a subpath of $P_1$ and therefore contains no vertex of $P_2$. Furthermore, there is a dipath $Q$ from $u$ to $v_{\ell_k}$ in $P_3[u, t(P_3)]\cup C_1\cup  P_2$.
	The union of $P[v_{\ell_k}, v_f] \odot P_3[v_f,u]$, $P_1[u, v_{\ell_1}]\odot P[v_{\ell_1}, v_{\ell_k}]$ and $Q$ is a subdivision of $B(k,1;k)$, a contradiction.

	\end{itemize}
\end{itemize}
\end{subproof}

Claim~\ref{claim:c+} shows the existence of $\phi^+$ and completes the proof of Lemma~\ref{lem:IC}.
\end{proof}

\begin{lemma}\label{lem:col-union-cycle}
Let ${\cal C}$ be a $k$-suitable collection of directed cycles in a $B(k,1;k)$-\free\ digraph.
There exists a proper colouring $\phi$ of $\bigcup{\cal C}$ with $\alpha_k$ colours, such that, each subpath of length $7k$ of each cycle of $\mathcal{C}$ is rainbow-coloured.
\end{lemma}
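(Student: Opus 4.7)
The plan is to colour each connected component of $\mathcal{C}$ independently (vertices of distinct components are disjoint in $\bigcup\mathcal{C}$) and to prove by strong induction on $|\mathcal{S}|$ the following strengthened statement: for any component $\mathcal{S}$ of $\mathcal{C}$, any cycle $C_0\in\mathcal{S}$, and any rainbow partial colouring $\phi_0$ of a subpath of $C_0$ of length at most $7k$ with colours in $[\alpha_k]$, there is a proper $[\alpha_k]$-colouring of $\bigcup\mathcal{S}$ extending $\phi_0$ in which every length-$7k$ subpath of every cycle of $\mathcal{S}$ is rainbow. Applying this with an empty $\phi_0$ on each component yields Lemma~\ref{lem:col-union-cycle}. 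The base case $|\mathcal{S}|=1$ is exactly Lemma~\ref{lem:IC} with $I(C_0)=C_0$.

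For the inductive step, I would first apply Lemma~\ref{lem:IC} to produce an extension $\Phi$ of $\phi_0$ to $I(C_0)$ in which every length-$7k$ subpath of $C_0$ and every $Q_j$ (for $C_j\in\mathcal{S}\cap C_0$) is rainbow. Let $A_1,\dots,A_r$ be the connected components of $\bigcup\mathcal{S}-I(C_0)$ and set $\mathcal{S}_i=\{C_j\in\mathcal{S}:V(C_j)\cap A_i\ne\emptyset\}$. Two structural claims are needed: \textbf{(a)} every cycle of $\mathcal{S}\setminus\{C_0\}$ belongs to exactly one $\mathcal{S}_i$ (so each $\mathcal{S}_i$ is a $k$-suitable sub-collection with $|\mathcal{S}_i|<|\mathcal{S}|$, connected through $A_i$), and \textbf{(b)} whenever $\bigcup\mathcal{S}_i$ meets $V(I(C_0))$, the cycle $C_{A_i}$ supplied by Lemma~\ref{lem:A} lies in $\mathcal{S}_i\cap(\mathcal{S}\cap C_0)$ and $\bigcup\mathcal{S}_i\cap V(I(C_0))\subseteq V(Q_{C_{A_i}})$. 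Since $Q_{C_{A_i}}$ has length less than $7k$ and is rainbow under $\Phi$, the restriction of $\Phi$ to $V(Q_{C_{A_i}})$ is a legal pre-colouring for the induction hypothesis applied to $\mathcal{S}_i$ with starting cycle $C_{A_i}$; when $\bigcup\mathcal{S}_i$ avoids $I(C_0)$, I apply the induction hypothesis with any cycle of $\mathcal{S}_i$ and an empty pre-colouring. Gluing the resulting colourings $\Phi_i$ with $\Phi$ produces the desired $\phi$ on $\bigcup\mathcal{S}$ using only $\alpha_k$ colours; properness follows because no arc of $\bigcup\mathcal{S}$ can straddle two different $\mathcal{S}_i$'s by (a), and the overlaps $V(Q_{C_{A_i}})$ are coloured consistently.

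The main obstacle is claim (b), and to a lesser extent (a). Both reduce to showing that, for every $C_j\in\mathcal{S}\setminus\{C_0\}$, the set $V(C_j)\cap V(I(C_0))$ is contained in a single subpath of a cycle of $\mathcal{S}$. When $C_j\in\mathcal{S}\cap C_0$, Lemma~\ref{lem:dis} applied to $C_0,C_j,C_l$ for each $C_l\in\mathcal{S}\cap C_0$ meeting $C_j$ outside $C_0$ forces $V(C_j)\cap V(C_l)\subseteq V(Q_j)$, and therefore $V(C_j)\cap V(I(C_0))=V(Q_j)$, a single subpath of $C_j$; otherwise Lemma~\ref{lem:A} confines $V(C_j)\cap V(I(C_0))$ to a single $P_{j,C_{A_i}}$. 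In either case $C_j-I(C_0)$ is connected, proving (a). For (b), any $C_j\in\mathcal{S}_i\cap(\mathcal{S}\cap C_0)$ contributes at least $|V(Q_j)|\ge 6k$ outside vertices that must lie on $C_{A_i}$, whereas $|V(P_{j,C_{A_i}})|\le k$; this forces $C_j=C_{A_i}$, whence $C_{A_i}\in\mathcal{S}_i\cap(\mathcal{S}\cap C_0)$, and a final round of Lemma~\ref{lem:dis} places every intersection point of $C_{A_i}$ with another cycle of $\mathcal{S}\cap C_0$ inside the $3k$-neighbourhood of $P_{0,C_{A_i}}$ on $C_{A_i}$, i.e.\ inside $V(Q_{C_{A_i}})$. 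Packaging these applications of Lemma~\ref{lem:dis} cleanly is where the real work lies.
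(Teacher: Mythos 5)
Your proposal is essentially the paper's own proof: the same strengthened inductive statement (a rainbow pre-coloured subpath of length at most $7k$ on one cycle of the collection), the same use of Lemma~\ref{lem:IC} to colour $I(C_0)$, and the same use of Lemma~\ref{lem:A} to argue that the already-coloured vertices of each component of $\bigcup\mathcal{S}-I(C_0)$ lie on a single rainbow dipath $Q_{C_A}$ of length less than $7k$ before recursing and gluing. The applications of Lemma~\ref{lem:dis} you defer as the ``real work'' behind your claims (a) and (b) are, for cycles meeting $C_0$, already packaged in Corollary~\ref{cor:util}(ii), and the paper itself asserts the remaining points with no more detail than your sketch provides.
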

\begin{proof}
We prove by induction on the number of cycles in $\mathcal{C}$ the following stronger statement:
{\it if there exists
a partial colouring $\phi$ such that one of the cycle $C_1$ has a path of length less than $7k$
which is rainbow-coloured, then we can extend this colouring to all $D[\mathcal{C}]$ using less
than $\alpha_k$ colours such that, on each cycle, every subpath of length
$7k$ is rainbow-coloured}.

Consider a rainbow-colouring of a subpath of length less than $7k$ of a cycle $C_1\in {\cal C}$.
By Lemma \ref{lem:IC}, we can extend this colouring to a colouring $\phi_1$ of $I(C_1)$ at most $\alpha_k$ colours.
Note that the non-coloured vertices of $\bigcup{\cal C}$ are in one of the connected components of $\bigcup{\cal C} - I(C_1)$.
Let $A$ be a connected component of $\bigcup{\cal C} - I(C_1)$. The coloured (by $\phi_1$) vertices of ${\cal C}\cap A$ are those of $({\cal C}\cap A)-A$. Hence, by Lemma \ref{lem:A}, they all belong to some cycle $C_j$ and so to the diptah $Q_j$ which has length at most $7k$.
Hence, by the induction hypothesis, we can extend $\phi_1$ to $A$. Doing this for each component, we extend $\phi_1$ to the whole $\bigcup{\cal C}$.
\end{proof}

Set $\beta_k= k(4k^2+2)(2\cdot (4k)^{4k} + 1) \alpha_k$.

\begin{lemma}\label{lem:DC}
Let ${\cal C}$ be a $k$-suitable collection of directed cycles in a $B(k,1;k)$-\free\ digraph $D$.
For every component $\mathcal{S}$ of $\mathcal{C}$, we have $\chi(D[\mathcal{S}]) \leq \beta_k$.
\end{lemma}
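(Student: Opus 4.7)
The plan is to extend the $\alpha_k$-coloring $\phi$ of $\bigcup \mathcal{S}$ provided by Lemma~\ref{lem:col-union-cycle} to a proper coloring of $D[\mathcal{S}]$ using $\beta_k$ colors. Writing $\beta_k = \alpha_k \cdot k(4k^2+2)(2(4k)^{4k}+1)$, I would take the product, via Lemma~\ref{lem:decomp}, of $\phi$ with a secondary coloring using $k(4k^2+2)(2(4k)^{4k}+1)$ colors that handles the remaining arcs of $D[\mathcal{S}]$.

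First I would check that all chords inside a single cycle are already handled by $\phi$. Indeed, for any $(u,v) \in A(D)$ with both $u,v$ on some $C_j \in \mathcal{S}$ and $(u,v) \notin A(C_j)$, if both $C_j[u,v]$ and $C_j[v,u]$ had length at least $k$ then the union of $(u,v)$, $C_j[u,v]$ and $C_j[v,u]$ would be a $B(k,1;k)$-subdivision. Hence $u$ and $v$ lie at cyclic distance less than $k < 7k$ on $C_j$ and receive distinct $\phi$-colors, since every subpath of length $7k$ of each cycle is rainbow-coloured. Consequently the only arcs of $D[\mathcal{S}]$ that might fail to be properly colored by $\phi$ are the \emph{cross arcs} $(u,v)$ for which no single cycle $C \in \mathcal{S}$ contains both endpoints; let $H$ be the subdigraph of $D[\mathcal{S}]$ formed by these cross arcs. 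It then suffices to show $\chi(H) \leq k(4k^2+2)(2(4k)^{4k}+1)$.

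I would bound $\chi(H)$ by proving that $H$ is degenerate of the corresponding order. Suppose, in some subdigraph of $H$, a vertex $v$ has at least $k(4k^2+2)(2(4k)^{4k}+1)$ neighbours $w_1, w_2, \dots$, each lying on some cycle $C_{j_i} \in \mathcal{S}$ not containing $v$. To each $w_i$ attach a position statistic on $C_{j_i}$ analogous to the set $\Dis(\cdot)$ used in the proof of Lemma~\ref{lem:IC}, then apply Lemma~\ref{min} and Lemma~\ref{max} to the resulting sequence. The bound on the number of neighbours guarantees the extraction of a sub-collection of $w_i$'s whose positions on their cycles are pairwise coherent. Combined with the arcs $v w_i$ and suitable detours through $\bigcup \mathcal{S}$ (using chains of shared paths $P_{i,j}$ linking the cycles $C_{j_i}$ inside $\mathcal{S}$), this coherent sub-collection should produce two long internally disjoint $(v,y)$-dipaths and a long $(y,v)$-dipath for some well-chosen vertex $y$ — i.e.\ a $B(k,1;k)$-subdivision, contradicting the hypothesis.

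The main obstacle will be precisely this last, path-construction step: turning a Ramsey-extracted coherent sub-collection of $w_i$'s into a genuine $B(k,1;k)$-subdivision. This will closely parallel the case analysis at the end of the proof of Lemma~\ref{lem:IC}, but is more delicate here because the cycles $C_{j_i}$ need not all meet a single common anchor cycle and may be connected only through longer chains of shared paths within $\mathcal{S}$. The factor $(4k)^{4k}$ originates from iterated applications of Lemmas~\ref{min} and~\ref{max}, while the factor $k(4k^2+2)$ absorbs the combinatorial cases for how $v$ and the $w_i$ sit relative to their shared paths.
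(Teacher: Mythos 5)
Your opening reduction is sound: chords $(u,v)$ within a single cycle $C_j$ force one of $C_j[u,v]$, $C_j[v,u]$ to have length less than $k$ (else one gets $B(k,1;k)$ directly), and the rainbow property from Lemma~\ref{lem:col-union-cycle} then handles those arcs. So reducing to the "cross arcs" subdigraph $H$ is a legitimate first move.

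The gap is the treatment of $H$. You propose a single degeneracy argument: a high-degree vertex $v$ with many cross-neighbours $w_i$, a $\Dis$-like statistic, Lemmas~\ref{min} and~\ref{max}, and then a bispindle. You correctly flag that "the cycles $C_{j_i}$ need not all meet a single common anchor cycle" is the sticking point, but you leave it unresolved, and that omission is not cosmetic. In Lemma~\ref{lem:IC} the whole argument is anchored to a single cycle $C_1$: the statistics $\Dis(\cdot)$ are distances to $C_1$, the auxiliary paths $P_1,P_2,P_3$ all terminate on $C_1$, and the bispindle is closed up along $C_1$. Without an analogue of that anchor you cannot even define a coherent position statistic for your $w_i$'s, let alone route two long internally disjoint forward paths and one long backward path between two fixed vertices while controlling how they interact with the long "chains of shared paths $P_{i,j}$" you invoke. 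Those chains can wander through arbitrarily many cycles of $\mathcal{S}$, and nothing in your sketch controls where they go or how they intersect one another.

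The paper's resolution is precisely the missing structural idea: it imposes a BFS level structure on the cycles of $\mathcal{S}$ (with a father/ancestor relation), and splits the arcs of $D[\mathcal{S}]$ by level gap into $A_0$ (same level), $A_1$ (gap $<k$), $A_2$ (gap $\ge k$). The $A_1$-arcs are killed by a trivial mod-$k$ level colouring; the $A_2$-arcs are killed by a genuine degeneracy argument, but it crucially uses Claim~\ref{claim:Rpos} to partition vertices into $X^+,X^-,X'$ (a coherent "before/after the intersection with the father" orientation) and uses the ancestor ordering to show the out-degree in $D_2[X^+\cup X']$ is at most $2k^2$; and the $A_0$-arcs are killed by re-using the $\alpha_k$-colouring $\phi$ to split into colour classes and then running a Gallai--Roy/Ramsey argument, per level and per colour class, where the level now plays the role of the anchor $C_1$. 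Your single degeneracy argument conflates all three of these cases: arcs inside one level, arcs between nearby levels, and arcs between far levels behave very differently, and the paper uses a different mechanism for each. Without the level decomposition you also misidentify where the factors of $\beta_k$ come from: the $k$ is $\chi(D_1)$, the $4k^2+2$ is $\chi(D_2)$, the $2(4k)^{4k}+1$ is the $X^+/X^-/X'$ split inside a single level with $(4k)^{4k}$ per part, and $\alpha_k$ is the number of colour classes of $\phi$ --- none of these is a count of "combinatorial cases for how $v$ and the $w_i$ sit." As written, the proposal identifies the right lemmas but not the decomposition that makes them applicable, so it is a plan with an open hole rather than a proof.
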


\begin{proof}
We define a sort of Breadth-First-Search for $\mathcal{S}$.
Let $C_0$ be a cycle of $\mathcal{S}$ and set $L_0 = \{C_0\}$. For every cycle $C_s$ of $\mathcal{S}\cap C_0$, we put $C_s$ in level $L_1$ and say that $C_0$ is the \textit{father} of $C_s$. We build  the levels $L_i$ inductively
until all cycles of $\mathcal{S}$ are put in a level : $L_{i+1}$ consists of every cycle $C_l$ not in $\bigcup_{j \leq i} L_j$ such that 
there exists a cycle in $L_i$ intersecting $C_l$.  For every $C_l\in L_{i+1}$, we choose one of the cycles $L_{i}$ intersecting it to be its {\it  father}. Henceforth every cycle in $L_{i+1}$ has a unique  father even though it might intersect many cycles of $L_i$.
A cycle $C$ is an {\it ancestor} of $C'$ if there is a sequence $C=C_1, \dots , C_q=C'$ such that $C_i$ is the father of $C_{i+1}$ for all $i\in [q-1]$.

For a vertex $x$ of $\bigcup \mathcal{S}$, we say that $x$ \textit{belongs to} level $L_i$ if
$i$ is the smallest integer such that there exists a cycle in $L_i$ containing $x$. Observe that the vertices of each cycle $C_l$ of ${\cal S}$ belong to consecutive levels, that is there exists $i$ such that $V(C_l)\subseteq L_i\cup L_{i+1}$.

To bound  the chromatic number of $D[{\cal S}]$, we partition its arc set of in $(A_0,A_1, A_2)$, where
\begin{itemize}
	\item $A_0$ is the set of arcs of $D[{\cal S}]$ which ends belong to the same level, and
	\item $A_1$ is the set of arcs of $D[{\cal S}]$ which ends belong to different levels $i$ and $j$ with $ | i - j| < k$.	
	\item $A_2$ is the set of arcs of $D[{\cal S}]$ which ends belong to different levels $i$ and $j$ with $ | i - j| \geq k$.
\end{itemize} 

For $i\in [3]$, let $D_i$ be the spanning subdigraph of $D[{\cal S}]$ with arc set $A_i$.
We shall now we bound the chromatic numbers of $D_0$, $D_1$ and  $D_2$.

\begin{claim}\label{claim:D1}
$\chi(D_1)\leq k$.
\end{claim}

\begin{subproof}
Let $\phi_1$ be the colouring that assigns to all vertices of level $L_i$ the colour $i$ modulo $k$, it 
is easy to see that $\phi_1$ is a proper colouring of $D_1$.
\end{subproof}

Let $C_l$ be a cycle of $L_i$, $i \geq 1$ and $C_{l'}$ its father. 
Let $p^+_l$ and $r^+_l$ be the vertices such that $C_l[t_{l,l'}, p^+_l]$ and $C_l[p^+_l,r^+_l]$  have length $k$. 
Let $p^-_l$ and $r^-_l$ be the vertices such that $C_l[p^-_l,s_{l,l'}]$ and $C_l[r^-_l,p^-_l]$ have length $k$.
Let $R^-_l$ be the set of vertices of $C_l]r^-_l, s_{l,l'}[$, $P^-_l$ the set of vertices of $C_l]p^-_l, s_{l,l'}[$,
$R^+_l$ the set of vertices of $C_l ]t_{l,l'}, r_l[$, $P^+_l$ the set of vertices of $C_l ]t_{l,l'}, r_l[$, and finally let 
$R'_l$ be the set of vertices belonging to $L_i$ in $C_l \setminus \{ R^+_l \cup R^-_l \}$.

\begin{claim}\label{claim:Rpos}
Let $x$ be a vertex in $L_i$ with $i\geq 1$. Let $C_l$ and $C_m$ be two cycles of $L_i$ containing $x$.
Then either $x \in P^+_l$ and  $x \in  P^+_m$, or $x \in P^-_l$ and  $x \in  P^-_m$.
\end{claim}

\begin{subproof}
Suppose for a contradiction that $x\in P^+_l$ and $x \not \in  P^+_m$  . 
Let $C_{l'}$ and $C_{m'}$ be the fathers of $C_l$ and $C_m$ respectively (they can be the same cycle). 
By definition of the $L_j$'s,  there exists a dipath $P$ from $t_{l,l'}$ to $s_{m,m'}$ only going through 
$C_{l'}$, $C_{s'}$ and their ancestors.
In particular $P$ is disjoint from $C_l - C_{l'}$ and  $C_s - C_{s'}$.
Observe that $C_l[s_{l,l'},t_{l,m}]$ has length at most $3k$ because it is contained in the union of $P_{l,l'}$, $P_{l,m}$, and $C_l[t_{l.l'}, x]$ which has length at most $k$ because $x\in P^+_l$. Hence  $C_l[t_{l,m},s_{l,l'}]$ has length at least $k$.
Moreover $C_m[s_{m,m'},t_{l,m}]$ contains  $C_m[t_{m,m'}, x]$ which has length at least $k$ because $x\notin P^+_m$.
Thus the union of $C_l[t_{l,m},s_{l,l'}] \odot P$, $C_m[t_{l,m},s_{m,m'}]$, 
and  $C_m[s_{m,m'},t_{l,m}]$ is a subdivision of $B(k,1;k)$, a contradiction.
The case where  $x\in P^-_l$ and $x \not \in  P^-_m$ is symmetrical and the case where $x$ does not belong to $P^-_l \cup P^+_l \cup P^-_m \cup P^+_m$ is identical.
\end{subproof}

Claim~\ref{claim:Rpos} imply that each level $L_i$ may be partitioned into sets $X^+_i$, $X^-_i$ and $X'_i$, where
$X^+_i$ (resp.  $X^-_i$) is the set of vertices $x$ of $L_i$ such that every $x\in R^+_l$ (resp. $x\in R^-_l$)
for every cycle $C_l$ of $L_i$ containing $x$ and $X'_i$ is set of vertices in $L_i$ but not in $X^+_i\cup X^-_i$. 
Set $X^+=V(C_0) \cup \bigcup_{\i\geq 1} X^+_i$, $X^-=\bigcup_{i\geq 1} X^-_i$ and $X'=\bigcup_{i\geq 1} X'_i$.
Clearly $(X^+, X^-, X')$ is a partition of $V(D[{\cal S}])$.


%

\begin{claim}\label{claim:D2}
$\chi(D_2)\leq 4k^2 +2 $.
\end{claim}

\begin{subproof}
Since $X^+\cup X^-\cup X-=V(D_2)$, we have $\chi(D_2) \leq \chi (D_2[X^+\cup X']) + \chi(D_2[X^+\cup X'])$.
We shall prove that $\chi (D_2[X^+\cup X'])\leq 2k^2+1$ and $\chi (D_2[X^-\cup X'])\leq 2k^2+1$ which imply the result.

\medskip

Let $x$ and $y$ be two adjacent vertices of $D_2[X^+\cup X']$. Let $L_i$ be the level of $x$ and $L_j$ be the level of $y$. Without loss of generality, we may assume that $j\geq i+k$.
Let $C_x$ be the cycle of $L_i$ such that $x \in C_x$ and $C_y$ the cycle of $L_j$ such that $y \in C_y$.
By considering ancestors of $C_x$ and $C_y$, there is a shortest sequence of cycles $C_1 \dots C_p$ such that $C_1 = C_x$ and $C_p = C_y$ and for all $l\in [p-1]$, either $C_l$ is the father of $C_{l+1}$ or $C_{l+1}$ is the father of $C_l$.
In particular $C_{p-1}$ is the father of $C_p$. Since $y\in X^+\cup X'$, then $C[y,t_{p-1,p}]$ has length at least $k$.

Assume that $xy$ is an arc. In $\bigcup_{l=1}^{p-1} C_l$, there is a dipath $P$ from $t_{p-1,p}$ to $x$. This path has length at least $k-1$ because
it must go through all levels $L_{i'}$, $i\leq i'\leq j-1$ because the vertices of any cycle of ${\cal S}$ are in two consecutive levels.
Hence the union of $P\odot (x,y)$, $C_p[t_{p-1,p}, y]$, and  $C_p[y,t_{p-1,p}]$ is a subdivision of $B(k,1;k)$, a contradiction.
Hence  $yx$ is an arc.

Suppose that $C_x$ is not an ancestor of $C_y$. In particular, $C_2$ is the father of $C_1$
and there exists a path $P$ from $t_{1,2}$ to $y$ in $\bigcup_{l=2}^{p-1} C_l$ of length at least $k-1$ and 
internally disjoint from $C_1$. Hence the union of $P \odot yx$, $C_1[x,t_{1,2}]$ and $C_1[t_{1,2},x]$ is a subdivsion of  $B(k,1;k)$.
Hence $C_x$ is an ancestor of $C_y$. 

In  particular, $C_l$ is the father of $C_{l+1}$ for all $l\in [p-1]$. Let $P$ be the dipath from $t_{1,2}$ 
to $y$ $\bigcup_{l=2}^{p} C_l$. It has length at least $k-1$ because it must go through all levels $L_i$, $1\leq i\leq p-1$. 
$C_1[x,t_{1,2}]$ has length less than $k$ , for otherwise the union of $P \odot yx$, $C_1[x,t_{1,2}]$ and $C_1[t_{1,2},x]$ would be a subdivision of  $B(k,1;k)$.

To summarize, the only arcs of $D_2[X^+\cup X']$ are arcs $yx$ such that $C_x$ is an ancestor of $C_y$ and $C_1[x,t_{1,2}]$ has length less than $k$ with $C_1 \dots C_p$ is the
sequence of cycles such that $C_1=C_x$ to $C_p=C_y$ and $C_l$ is the father of $C_{l+1}$ for all $l\in [p-1]$.
In particular, $D_2[X^+\cup X']$ is acyclic.

Let $y$ be a vertex of $D_2[X^+\cup X']$.
Let $L_p$ be the level of $y$ and let $C_0, \dots, C_p$ be the sequence of cycles such that $C_{l-1}$ is the father of $C_l$ for all $l\in [p]$.
For $0\leq l\leq p-1$, let $Q_l$ be the subdipath of $C_l$ of length $k-1$ terminating at $t_{l,l+1}$. By the above property, the out-neighbbours of
$y$ are in $\bigcup_{l=0}^{p-1} Q_l$.
Suppose for a contradiction that $y$ has out-degree at least $2k^2+1$. Then there are $2k+1$ distinct indices $l_1 < \cdot < l_{2k+1}$ such that for all $i\in [2k+1]$,  $C_{l_i}$ contains an out-neighbour $X_i$ of $y$.
Let $P$ be the shortest dipath from $x_1$ to $y$ in $\bigcup_{l=l_1}^p C_l$. This dipath intersect all cycles $C_l$ $l_1\leq l\leq p$. Let $z$ be first vertex of $P$ along $C_{l_{k+1}}[x_{k+1}, t_{l_{k+1}, l_{k+2}}]$. Vertex $z$ belongs to either $L_{l_{k+1}-1}$ or $L_{l_{k+1}}$.
Thus $P[x_1, z]$ and $P[z, y]$ have length at least $k-1$ and $k$ respectively since $P$ goes through all levels from $L_{l_1}$ to $L_p$. 
 Hence the union of $(y, x_1) \odot P[x_1, z]$,  $(y,x_{k+1}\odot C_{l_{k+1}}[x_{k+1},z]$, and
$P[z, y]$ is a subdivision of $B(k,1;k)$, a contradiction.
Therefore $D_2[X^+\cup X']$ has maximum out-degree at most $2k^2$. 

$D_2[X^+\cup X']$ is acyclic and has maximum out-degree at most $2k^2$. Therefore it is $2k^2$-degenerate, and so $ \chi(D_2[X^+\cup X'])\leq 2k^2 +1$.
By symmetry, we have $ \chi(D_2[X^-\cup X'])\leq 2k^2 +1$.
\end{subproof}

To bound $\chi(D_0)$ we partition the vertex set according to a colouring $\phi$ of $\bigcup {\cal S}$ given by Lemma~\ref{lem:col-union-cycle}.
For every colour $c\in [\alpha_k]$, let $X^+(c)$ be the set $X^+\cap \phi^{-1}(c)$ of vertices of $X^+$ coloured $c$, and $X^-(c)$ the set $X^-\cap \phi^{-1}(c)$ of vertices of $X^-$ coloured $c$. Similarly, let  $X^+_i(c)= X^+_i\cap \phi^{-1}(c)$ and $X^-_i(c)= X^-_i\cap \phi^{-1}(c)$.
We denote by $D^+_0(c)$ (resp. $D^-_0(c)$, $D'_0(c)$) the subdigraph of $D_0$ induced by the vertices of $X^+(c)$, (resp. $X^-(c)$, $X'(c)$).

\begin{claim} \label{claim:X'}
$\chi(D'_0(c))= 1$ for all $c\in [\alpha_k]$.
\end{claim}

\begin{subproof}
We need to prove that $D'_0(c)$ has no arc.
Suppose for a contradiction that $xy$ is an arc of $D'_0(c)$. 
By definition of $D_0$ $x$ and $y$ are in a same level $L_i$.
Let $C_l$ and $C_m$ be two cycles of $L_i$ such that $x \in C_l$ and $y \in C_m$.

If $C_l=C_m$, then both $C_l[x,y]$ and $C_l[y,x]$ have length at least $7k$ because the subdipaths of  length $7k$ of $C_l$ are rainbow-coloured by $\phi$. Hence the union of those paths and $(x,y)$ is  a subdivision of $B(k,1;k)$, a contradiction.
Henceforth, $C_l$ and $C_m$ are distinct cycles.

Suppose first that $C_l$ and $C_m$ intersect. By Claim~\ref{claim:Rpos}, $s_{l,m}$
belongs to $P^-_l$, $P^+_l$ or $L_{i-1}$, and by construction of $R'_l$, $C_l[x,s_{l,m}]$ and 
$C_l[s_{l,m},x]$ are both longer than $k$. Therefore they form with $(x,y) \odot C_m[y,s_{l,m}]$ a subdivision of $B(k,1;k)$, a contradiction.

Suppose now that $C_l$ and $C_m$ do not intersect. Let $C_l'$ and $C_m'$ be the fathers of $C_l$ and $C_m$ respectively. 
Let $P$ be the dipath  from $s_{m,m'}$ to $s_{l,l'}$ in of $\cup_{j<i} L_j$. Then the union of $C_l[s_{l,l'}, x]$,
$(x,y) \odot C_m[y, s_{m,m'}] \odot P$, and $C_l[x,s_{l,l'}]$ is a subdivision of $B(k,1,;k)$, a contradiction.
\end{subproof}

\begin{claim}\label{claim:D0}
$\chi(D^+_0(c))\leq (4k)^{4k}$ for all $c\in [\alpha_k]$. 
\end{claim}
\begin{subproof}
Set $p=(4k)^{4k}$.
Suppose for a contradiction that there exists $c$ such that $\chi(D^+_0(c)) > p$. 
Observe that $D^+_0(c)$ is the disjoint union of the $D[X^+_i(c)]$. Thus there exists a level $L_{i_0}$ such that $\chi(D[X^+_i(c)]) >p$.
Moreover $i_0>0$, because the vertices of $C_0$ coloured $c$ form a stable set.
 By Theorem~\ref{thm:gallairoy}, there exists a dipath $P=(v_0,  \dots , v_{p})$ of length $p$ in $D[X^+_i(c)]$.

Suppose that $P$ contains two vertices $x$ and $y$ of a same cycle $C$ of ${\cal S}$. 
Without loss of generality, we may assume that $P]x,y[$ contains no vertices of $C$.
Now  both $C[x,y]$ and $C[y,x]$ have length at least $7k$ because the subdipaths of  length $7k$ of $C$ are rainbow-coloured by $\phi$. 
Thus the union of $C[x,y]$, $P[x,y]$ and $C[y,x]$ is a subdivision of $B(k,1,;k)$, a contradiction.
Hence $P$ intersects every cycle of ${\cal S}$ at most once.

\medskip

For every $v\in V(P)$, let $\Len(v)$ be the set of lengths of $C_l[t_{l,l'},v]$ for all cycle $C_l \in  L_{i_0}$ containing $v$ and whose father is $C_{l'}$.

For $1\leq i\leq p$, let $m_i = \min \Len(v_i)$. By Claim~\ref{claim:Rpos}, $\Len(v_i) \subset [2k]$.
Lemma \ref{min} applied to $(m_i)_{1\leq i\leq p}$ yields a set $L$ of $4k^2$ indices of such that 
for any $i< j \in L$,  $m_i=m_j$ and $m_k > m_i$, for all $i< k < j$.
Let $l_1 < l_2 < \cdots < l_{4k^2}$ be the elements of $L$ and let $m= m_{l_1} = \cdots = m_{l_{4k^2}}$.

For $1\leq j\leq 4k^2-1$, let $M_j = \max \bigcup_{l_j\leq i < l_{j+1}} \Len(v_i)$.
By definition $M_j\leq 2k$.
Applying Lemma~\ref{max} to $(M_j)_{1\leq j\leq 4k^2}$,  we get a sequence of size $2k$ $M_{j_0+1} \dots M_{j_0+{2k}}$ such that $M_{j_0+{2k}}$ is the greatest. 
For sake of simplicity, we set $\ell_i =j_0+i$ for $1\leq i\leq 2k$.
Let $f$ be  the smallest index not smaller than $\ell_{2k}$ for which $M_{\ell_{2k}} \in \Len (v_f)$. 

Let $j_1$ and $j'_1$ be indices such that $v_{\ell_1} \in C_{j_1}$, $C_{j_1}$ is in $L_{i_0}$, $C_{j'_1}$ is the father of $C_{j_1}$ and 
$C_{j_1}[t_{j'_1,j_1},v_{\ell_1}]$ has length $m$. Set $P_1=C_{j_1}[t_{j'_1,j_1},v_{\ell_1}]$.
Let $j_2$ and $j'_2$ be indices such that $v_{\ell_k} \in C_{j_2}$, $C_{j_2}$ is in $L_{i_0}$, $C_{j'_2}$ is the father of $C_{j_2}$  and 
$C_{j_2}[t_{j'_2,j_2},v_{\ell_k}]$ has length $m$. Set $P_2=C_{j_2}[t_{j'_2,j_2},v_{\ell_k}]$.
Let $j_3$ and $j'_3$ be indices such that $v_{f} \in C_{j_3}$, $C_{j_3}$ is in $L_i$, $C_{j'_3}$ is the father of $C_{j_3}$  and 
$C_{j_3}[t_{j'_3,j_3}, v_{f}]$ has length $M_{\ell_{2k}}$. Set $P_3=C_{j_3}[v_{f},s_{j'_3,j_3}]$.
Note that any internal vertex $x$ of $P_1$ or $P_2$ has an integer in $\Len(x)$
which is smaller than $m$ and every internal vertex $y$ of $P_3$  either has an integer in $\Len(y)$ which
is greater than $M_{\ell_{2k}}$, or does not belong to $X^+(c)$. Hence, 
$P_1$, $P_2$ and $P_3$ are disjoint from $P[v_{\ell_1},v_f]$. 

We distinguishes cases according to the intersection between $P_1$, $P_2$ and $P_3$:
Let $P_5$ be a shortest dipath in $\cup_{i < i_0} L_i$ from $s_{j'_3,j_3}$ to $t_{j'_1,j_1}$ and
 $P_5$ be a shortest dipath in $\cup_{i < i_0} L_i$ from $s_{j'_3,j_3}$ to $t_{j'_2,j_2}$

\begin{itemize}
	\item Suppose $P_3$ does not intersect $P_1 \cup P_2$. 
	\begin{itemize}
		\item Suppose $P_1$ and $P_2$ are disjoint and let $P_4$ be the shortest dipath in $\cup_{i < i_0} L_i$ 
from $t_{j'_1,j_1}$ to $t_{j'_2,j_2}$. Let $v$ be the last vertex of $P_4$ in $P_4 \cap P_5$.
The union of $P_5[v, t_{j'_1,j_1}] \odot P_1 \odot P[v_{\ell_1}, v_{\ell_k}]$, $P_4[v,t_{j'_2,j_2}] \odot P_2$,
and $P[v_{\ell_k}, v_f] \odot P_3 \odot P_5[s_{j'_3,j_3}, v]$ is a subdivision of $B(k,1;k)$,  a contradiction.

		\item Assume now $P_1$ and $P_2$ intersect. Let $u$ be the last vertex along $P_2$ on which they intersect.
		 The union of $P_1[u,v_{\ell_1}]\odot P[v_{\ell_1}, v_{\ell_k}]$, $P_2[u, v_{\ell_k}]$, and $P[v_{\ell_k}, v_f]\odot P_3\odot P_5 \odot P_1[t_{j'_1,j_1}, u]$ is a subdivision of $B(k,1;k)$, a contradiction.
	\end{itemize}

	\item Assume $P_3$ intersects $P_1\cap P_2$. Let $v$ be the first vertex along $P_3$ in $P_1\cap P_2$ and let $u$ be the last vertex of $P_1\cap P_2$ along $P_2$. The union of $P_1[u,v_{\ell_1}]\odot P[v_{\ell_1}, v_{\ell_k}]$, $P_2[u, v_{\ell_k}]$, and $P[v_{\ell_k}, v_f]\odot P_3[v_f,v]\odot P_1[v, u]$ is a subdivision of $B(k,1;k)$, a contradiction.

	\item Assume now that $P_3$ intersect $P_1\cup P_2$ but not $P_1\cap P_2$. Let $v$ be the first vertex along $P_3$ in $P_1\cup P_2$.
	\begin{itemize}
	\item If $v \in P_2$, let $u$ be the last vertex of $P_2\cap P_3$ along $P_3$. Observe that $P_3[v,u]$ is also a subpath of $P_2$ and therefore contains no vertex of $P_1$. Hence, the union of $P_3[u, s_{j'_3,j_3}]\odot P_5 \odot P_1\odot P[v_{\ell_1},v_{\ell_k}]$,  $P_2[u,v_{\ell_k}]$, and $P[v_{\ell_k}, v_f] \odot P_3[v_f,v]$ is a subdivision of $B(k,1;k)$, a contradiction.

	\item If $v\in P_1$, let $u$ be the last vertex of $P_1\cap P_3$ along $P_3$.  Observe that $P_3[v,u]$ is also a subpath of $P_1$ and therefore contains no vertex of $P_2$. 
	Hence the union of $P_1[u, v_{\ell_1}]\odot P[v_{\ell_1}, v_{\ell_k}]$, $P_3[u, s_{j'_3,j_3}]\odot P_6 \odot P_2$, and $P[v_{\ell_k}, v_f] \odot P_3[v_f,u]$,  is a subdivision of $B(k,1;k)$, a contradiction.

	\end{itemize}

\end{itemize}
\end{subproof}

Similarly to Claim~\ref{claim:D0}, one proves that $\chi(D^-_0(c))\leq (4k)^{4k}$ for all $c\in [\alpha_k]$.
Hence, $\chi(D_0(c) \leq \chi(D^+_0(c)) + \chi(D^-_0(c)) + \chi(D'_0(c) \leq 2\cdot (4k)^{4k} + 1$.
Thus  $$\chi(D_0) \leq (2\cdot (4k)^{4k} + 1) \alpha_k.$$

Via Lemma~\ref{lem:decomp}, this equation and Claims~\ref{claim:D1} and  \ref{claim:D2} yields 
$$\chi(D) \leq \chi(D_0)\times \chi(D_1) \times \chi(D_2) \leq k(4k^2+2)(2\cdot (4k)^{4k} + 1) \alpha_k = \beta_k.$$
\end{proof}

\subsection{Proof of Theorem \ref{th:main}}

Consider $\mathcal{C}$ be a maximal $k$-suitable collection of cycles in $D$. Let $D'$ be the digraph obtained by contracting 
every strong component $S$ of $\bigcup \mathcal{C}$ (which is $\bigcup {\cal S}$ for some component ${\cal S}$ of ${\cal C}$) into one vertex. For each connected component ${\cal S}_i$ we call $s_i$ the
new vertex created.

\begin{claim}\label{cl:nocycle}
$\chi(D') \leq 8k$.
\end{claim}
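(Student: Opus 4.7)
The plan is to argue by contradiction, combining Bondy's Theorem with the maximality of $\mathcal{C}$. Assume for contradiction that $\chi(D') \geq 8k+1$. Since contracting strong subdigraphs of a strong digraph preserves strongness, $D'$ is strong, and Bondy's Theorem (Theorem~\ref{thm:bondy}) supplies a directed cycle $C^*$ in $D'$ of length at least $8k+1$.

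I will lift $C^*$ to a directed cycle $C'$ in $D$: for each vertex of $C^*$ that is a contracted vertex $s_i$ representing a strong component $S = \bigcup \mathcal{S}_i$ of $\bigcup \mathcal{C}$, pick a $(p_i, l_i)$-dipath $P_i$ in $D[S]$ (which is strong), where $p_i$ and $l_i$ are the endpoints in $S$ of the in- and out-arcs of $s_i$ inherited from $C^*$; replacing each such $s_i$ by $P_i$ in $C^*$ yields $C'$, and I take the $P_i$'s so as to minimize $|C'|$. Then $|C'| \geq |C^*| \geq 8k+1 \geq 8k$, and $C' \notin \mathcal{C}$, since each cycle of $\mathcal{C}$ sits inside a single strong component (and hence inside a single vertex of $D'$) while $C'$ spans several vertices of $D'$. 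The goal is to show that $C' \cap C_j$ is a subpath of order at most $k$ for every $C_j \in \mathcal{C}$; once established, $\mathcal{C} \cup \{C'\}$ is a strictly larger $k$-suitable collection, contradicting maximality.

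If $C_j$ lies in a strong component not visited by $C'$, then $C' \cap C_j = \emptyset$. Otherwise $C' \cap C_j = P_i \cap C_j$ for the unique $i$ whose strong component contains $C_j$. A key numerical ingredient is that $|C'| - |Q| \geq |C^*| + |P_i| - |Q| \geq |C^*| \geq 8k+1 \geq k$ for any subpath $Q \subseteq P_i$ of $C'$. Hence, if $P_i \cap C_j$ is a single subpath $Q$ of length at least $k$ with endpoints $x = s(Q)$ and $y = t(Q)$, then the three dipaths $Q$ (length $\geq k$), $C_j[y, x]$ (length $\geq 1$ since $Q$ is a proper subpath of $C_j$), and $C'[y, x]$ (length $\geq k$) are pairwise internally disjoint (using $C' \cap C_j = Q$) and together form a $B(k, 1; k)$-subdivision of $D$, contradicting the subdivision-free hypothesis.

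The main obstacle I expect is the case where $P_i \cap C_j$ has several connected components, so $C'[y, x]$ and $C_j[y, x]$ are no longer internally disjoint. My plan is to exploit the minimality of $|C'|$: for any detour $R = P_i[b, a]$ of $P_i$ between two consecutive intersection subpaths of $P_i$ with $C_j$ (so $R$ is internally disjoint from $C_j$ and $b, a \in V(C_j)$), the walk $P_i[p_i, b] \odot C_j[b, a] \odot P_i[a, l_i]$ contains a $(p_i, l_i)$-dipath of length at most $|P_i| - |R| + |C_j[b, a]|$, so minimality forces $|R| \leq |C_j[b, a]|$. Combining this with $|C_j| \geq 8k$ and the bound $|C'[a, b]| \geq |C^*| \geq 8k+1$ on the $C'$-complement of $R$, and selecting a detour whose $C_j$-span is free of other intersection subpaths (e.g., one associated with a $C_j$-consecutive pair of intersection subpaths), I expect to exhibit a $B(k, 1; k)$-subdivision in every bad configuration, using $R$, suitable subpaths of $C_j$, and the long $C'$-complement when necessary. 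Ruling out all such configurations finishes the argument and establishes $\chi(D') \leq 8k$.
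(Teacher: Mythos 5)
Your setup is the paper's: $D'$ is strong, Bondy's theorem yields a directed cycle of length at least $8k$ in $D'$, you lift it to a cycle $C'$ of $D$ of length at least $8k$ by replacing each contracted vertex by a dipath in its strong component, and you aim to contradict the maximality of $\mathcal{C}$ by showing that $C'$ meets every $C_j\in\mathcal{C}$ in a subpath of order at most $k$, the long-intersection case being killed by the bispindle formed by $Q$, $C_j[y,x]$ and $C'[y,x]$. The gap is precisely at the step you yourself flag as the main obstacle: you never prove that a disconnected intersection $C'\cap C_j$ leads to a $B(k,1;k)$-subdivision; you only state that you ``expect'' to exhibit one in every bad configuration. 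Since $\mathcal{C}\cup\{C'\}$ is $k$-suitable only if $C'\cap C_j$ is a (common) subpath, this unproved dichotomy is the whole content of the claim, so the contradiction with maximality is not reached.

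Moreover, the tools you propose do not close this gap. Take a detour $R=P_i[b,a]$ between two consecutive intersections of $P_i$ with $C_j$ in which both $R$ and the skipped arc $C_j[b,a]$ have length less than $k$ --- for instance $R$ a single chord of $C_j$ jumping two vertices ahead, which is perfectly compatible with $D$ being $B(k,1;k)$-subdivision-free. Then $C'\cap C_j$ is not a subpath, yet no bispindle can be assembled: in the direction $b\to a$ the only available internally disjoint paths are $R$ and $C_j[b,a]$, both too short to play the role of the path of length at least $k$, and in the direction $a\to b$ the long candidates $C_j[a,b]$ and $C'[a,b]$ need not be internally disjoint (the latter may re-meet $C_j$), and even if they were, the required $(b,a)$-path of length at least $k$ is still missing. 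Your minimality inequality $\lvert R\rvert\le \lgth(C_j[b,a])$ does not exclude this situation; on the contrary, since you route the $P_i$ in $D[S]$ (chords allowed) and minimise $\lvert C'\rvert$, such chordal shortcuts are exactly what a minimiser prefers over following $C_j$. (The same chord issue already affects your ``single subpath'' case: a subpath $Q$ of $P_i$ whose vertices lie on $C_j$ need not be a subpath of $C_j$, which is needed both for the internal disjointness with $C_j[y,x]$ and for $k$-suitability of $\mathcal{C}\cup\{C'\}$.) What is needed here is not a length comparison but a routing or rerouting argument guaranteeing that each lifted path follows $C_j$ whenever it meets it --- the paper does this by taking the lifted paths inside $\bigcup\mathcal{C}$ and invoking the resulting subdipath property --- or else a genuine case analysis producing a bispindle whenever such a repair is impossible. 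As written, your proof stops short of that.
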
 

\begin{proof}
First note that since $D$ is strong so is $D'$. 

Suppose for a contradiction that $\chi(D') > 8k$. By Theorem \ref{thm:bondy}, there exists
a directed cycle $C = (x_1, x_2, \dots , x_l, x_1)$ of length at least $8k$. 	
For each vertex $x_j$ that corresponds to a $s_i$ in $D$, the arc $x_{j-1}x_j$ corresponds in $D$ to an arc whose head is a vertex $p_i$ of $S_i$ and the arc $x_jx_{j+1}$ corresponds to an arc whose tail is a vertex $l_i$ of $S_i$. Let $P_j$ be the dipath 
		from $p_i$ to $l_i$ in $\bigcup \mathcal{C}$. Note that this path intersects the elements of $S_i$ only along a subdipath. 
Let $C'$ be the cycle obtained from $C$ where we replace all contracted vertices $x_j$ by the path $P_j$. 
First note that $C'$ has length at least $8k$. Moreover, a cycle of $\mathcal{C}$  can
intersect $C'$ only along one $P_j$, because they all correspond to different strong components of $\bigcup \mathcal{C}$. Thus $C'$ intersects  each cycle of $\mathcal{C}$ on a subdipath. Moreover this subdipath has length smaller than $k$ for otherwise $D$ would contain 
a subdivision of $B(k,1;k)$. So $C'$ is a directed cycle of length at least $8k$ which intersects every cycle of $\mathcal{C}$ along a 
subdipath of length less than $k$. This contradicts the maximality of $\mathcal{C}$.
\end{proof}

Using Lemma \ref{lem:contrac} with
Claim~\ref{cl:nocycle} and Lemma~\ref{lem:DC}, we get that $\chi(D) \leq 8k \cdot \beta_k$.
This proves Theorem~\ref{th:main} for $\gamma_k=8k\cdot \beta_k = 8k^2(4k^2+2)(2\cdot (4k)^{4k} + 1) (2\cdot \col + 14k)$.

\end{document}